\documentclass{amsart}

\usepackage{amssymb,amsmath,amscd}

\usepackage[colorlinks,linktocpage]{hyperref}
\hypersetup{linkcolor=[rgb]{0,0,0.715}}
\hypersetup{citecolor=[rgb]{0,0.715,0}}

\newtheorem{thm}{Theorem}[section]
\newtheorem{cor}[thm]{Corollary}
\newtheorem{prop}[thm]{Proposition}
\newtheorem{lem}[thm]{Lemma}

\newtheorem{quest}[thm]{Question}
\newtheorem{obs}[thm]{Observation}
\newtheorem{mainthm}{Theorem}

\theoremstyle{definition}
\newtheorem{defn}[thm]{Definition}

\usepackage{comment}

\theoremstyle{remark}
\newtheorem{rem}[thm]{Remark}
\newtheorem{rems}[thm]{Remarks}

\newcommand{\Ric}{\mathrm{Ric}}
\newcommand{\dimH}{\dim_{\mathcal{H}}}
\newcommand{\dimT}{\dim_{\mathcal{T}}}
\newcommand{\length}{\mathrm{length}}

\makeatletter
\let\c@equation\c@thm
\makeatother
\numberwithin{equation}{section}

\bibliographystyle{plain}

\setcounter{tocdepth}{1}
\linespread{1.08}

\title[]{Nonnegative Ricci curvature, splitting at infinity, and first Betti number rigidity}
\author{Jiayin Pan}
\address[Jiayin Pan]{Department of Mathematics, University of California, Santa Cruz, CA, USA.}
\email{jpan53@ucsc.edu}
\author{Zhu Ye}
\address[Zhu Ye]{School of Mathematical Sciences, Capital Normal University, Beijing, China.}
\email{2210501006@cnu.edu.cn}

\begin{document}
\begin{abstract}
We study the rigidity problems for open (complete and noncompact) $n$-manifolds with nonnegative Ricci curvature. We prove that if an asymptotic cone of $M$ properly contains a  Euclidean $\mathbb{R}^{k-1}$, then the first Betti number of $M$ is at most $n-k$; moreover, if equality holds, then $M$ is flat. Next, we study the geometry of the orbit $\Gamma\tilde{p}$, where $\Gamma=\pi_1(M,p)$ acts on the universal cover $(\widetilde{M},\tilde{p})$. Under a similar asymptotic condition, we prove a geometric rigidity in terms of the growth order of $\Gamma\tilde{p}$. We also give the first example of a manifold $M$ of $\mathrm{Ric}>0$ and $\pi_1(M)=\mathbb{Z}$ but with a varying orbit growth order.
\end{abstract}

\maketitle

\section{Introduction}
	
The relationship between Ricci curvature and the first Betti number is an important topic in Riemannian geometry. In \cite{bochner1,bochner2}, Bochner proved that any closed $n$-manifold $M$ with $\mathrm{Ric}\ge 0$ has first Betti number estimate $b_1(M)\le n$; moreover, equality holds if and only if $M$ is isometric to a flat torus $\mathbb{T}^n$. This classical result was later extended to almost nonnegative Ricci curvature and quantitative rigidity; see the works by Gromov \cite{Gromov}, Gallot \cite{gallot}, and Colding \cite{colding}.

For open (complete and noncompact) manifolds with $\mathrm{Ric}\ge 0$, Anderson proved that if the manifold $M$ has volume growth order at least $k$, that is, there is a point $p\in M$ and some constant $c>0$ such that $\mathrm{vol}(B_R(p))\ge cR^k$ for all $R\ge 1$, then $b_1(M)\le n-k$ \cite{anderson}. Together with the result that $M$ has at least linear volume growth by Yau \cite{yau2}, we see the estimate $b_1(M)\le n-1$. In a recent work by the second-named author \cite{Ye}, the rigidity problem for $b_1(M)=n-1$ was considered: 

\begin{thm}\cite{Ye}\label{thm:codim1}
    Let $M^n$ be an open manifold with $\mathrm{Ric}\ge 0$. Then $b_1(M)=n-1$ if and only if $M$ is flat and either isometric to $\mathbb{R}^{1}\times \mathbb{T}^{n-1}$ or diffeomorphic to $\mathbb{M}^2 \times \mathbb{T}^{n-2}$, where $\mathbb{M}^2$ is the open M\"obius band.
\end{thm}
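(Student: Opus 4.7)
The ``if'' direction is a direct fundamental-group calculation: since $\pi_1(\mathbb{M}^2)=\mathbb{Z}$, both model manifolds have $\pi_1=\mathbb{Z}^{n-1}$ and hence $b_1=n-1$. For the ``only if'' direction, my plan is to split the argument into two stages: (A) show that $M$ is flat, i.e.\ $\widetilde{M}\cong\mathbb{R}^n$; and (B) classify noncompact flat $n$-manifolds with $b_1=n-1$ via Bieberbach.

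\emph{Step (A), flatness.} Combining Yau's linear lower bound on volume growth with Anderson's estimate $b_1(M)\le n-k$ for volume-growth order $k$ forces $M$ to have exactly linear volume growth. Let $\bar{M}\to M$ be the Riemannian covering with deck group $\mathbb{Z}^{n-1}$ arising from the free part of $H_1(M;\mathbb{Z})$, and let $g_1,\dots,g_{n-1}$ be commuting generators. Each displacement function $d_{g_i}(x)=d(x,g_ix)$ on $\bar{M}$ is $\mathbb{Z}^{n-1}$-invariant and descends to $M$. When $\inf d_{g_i}$ is attained, a first-variation/Busemann argument shows the minimizing geodesic extends to a line, and the Cheeger-Gromoll splitting theorem yields an $\mathbb{R}$-factor of $\bar{M}$; commutativity of the $g_i$'s lets this iterate to a splitting $\bar{M}=\mathbb{R}^{n-1}\times L$ with $L$ complete one-dimensional. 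Since a one-dim manifold of $\mathrm{Ric}\ge 0$ is either $\mathbb{R}$ or $S^1$, lifting to the universal cover gives $\widetilde{M}=\mathbb{R}^{n-1}\times\widetilde{L}=\mathbb{R}^n$, so $M$ is flat.

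\emph{Main obstacle.} The delicate case is the \emph{loops-at-infinity} regime, in which $\inf d_{g_i}=0$ but no axis exists in $\bar{M}$. Here I would invoke a critical-rescaling argument in the spirit of Sormani and of Pan: take $p_k\to\infty$ in $M$ with $\lambda_k:=d_{g_i}(p_k)\to 0$, rescale $(\bar{M},\bar p_k,\lambda_k^{-2}g)$, and pass to an equivariant pointed Gromov--Hausdorff limit on which $g_i$ becomes a unit translation. Linear volume growth of $M$ together with standard volume comparison should keep the limit noncollapsed, so Cheeger--Colding's almost-splitting produces an $\mathbb{R}$-factor in the limit, which one then transfers to an honest splitting of $\bar{M}$. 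Making this transfer from the rescaled limit back to $\bar{M}$ rigorous --- rather than landing only in an asymptotic cone --- is the technical heart of the argument.

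\emph{Step (B), Bieberbach classification.} Write $M=\mathbb{R}^n/\Gamma$. By Bieberbach, $\Gamma$ contains a normal finite-index pure-translation subgroup $\Lambda\cong\mathbb{Z}^k$ with $F:=\Gamma/\Lambda$ finite; noncompactness forces $k\le n-1$. The five-term exact sequence for $1\to\Lambda\to\Gamma\to F\to 1$ gives $\mathrm{rank}\,\Gamma^{\mathrm{ab}}=\dim_{\mathbb{Q}}(\Lambda\otimes\mathbb{Q})^F$, so $b_1=n-1$ forces $k=n-1$ and $F$ to act trivially on $\Lambda$. Writing $V=\Lambda\otimes\mathbb{R}$, the linear part of $F$ fixes $V$ pointwise and hence embeds in $O(V^\perp)=\{\pm 1\}$. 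If $F$ is trivial then $\Gamma$ consists of translations in $V$ and $M$ is isometric to $\mathbb{R}\times\mathbb{T}^{n-1}$. If $F=\mathbb{Z}/2$, any $g\in\Gamma\setminus\Lambda$ has the form $(v,t)\mapsto(v+w,-t)$ with $2w\in\Lambda$; the free action forces $w\notin\Lambda$, and choosing a basis of $\Lambda$ adapted to $[w]\in V/\Lambda$ so that $w=\tfrac12 e_1$ identifies $M$ diffeomorphically with $\mathbb{M}^2\times\mathbb{T}^{n-2}$.
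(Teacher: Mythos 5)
Your ``if'' direction and the Bieberbach argument in Step~(B) are sound; the problem lies entirely in Step~(A).

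First, the volume-growth reduction does not follow as claimed. Anderson's theorem says that a polynomial \emph{lower} volume growth bound $\mathrm{vol}(B_R(p))\ge cR^k$ implies $b_1(M)\le n-k$; combined with Yau, $b_1(M)=n-1$ therefore rules out such a lower bound of order $\ge 2$, but it gives no \emph{upper} bound $\mathrm{vol}(B_R(p))\le CR$. ``Exactly linear volume growth'' in Sormani's sense requires that upper bound, and it does not follow from Anderson+Yau; indeed linear volume growth of $M$ is best viewed as a \emph{consequence} of the rigidity you are trying to prove, not an input. Second, and more seriously, the loops-at-infinity case is acknowledged but not actually handled. A Cheeger--Colding almost-splitting in a blow-up limit at base points escaping to infinity does not transfer to an honest splitting of $\bar M$; closing that gap \emph{is} the hard part, and the rescaling sketch as written does not do it (it also leans on the unestablished noncollapsing from the first point). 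Finally, even when every displacement infimum is attained, iterating the splitting requires showing that after $\bar M=\mathbb{R}\times L$ the induced isometries of $L$ coming from the remaining $g_j$ still realize their displacement infima, which is not automatic.

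For comparison, the paper derives this statement as the $k=1$ case of Theorem~\ref{thm:main_b1}, by a route that sidesteps these difficulties. One considers the equivariant asymptotic cone $(r_i^{-1}\overline{M},\bar p,\Lambda)\to(Y,y,G)$ of the abelian cover $\overline M=\widetilde M/[\Gamma,\Gamma]$ with $\Lambda=\Gamma/[\Gamma,\Gamma]$. Lemma~\ref{lem:topol_dim} (via Corollaries~\ref{cor:asy_topol_dim} and~\ref{cor:tan_topol_dim}) shows that a $\mathbb{Z}^{b}$-action with shrinking generators limits to a closed $\mathbb{R}^b$-subgroup of $G$. A dimension count at a regular point of $Y/G$ gives $b\le n-1$; when $b=n-1$ a tangent cone of $Y$ has Hausdorff dimension $n$, so by Theorem~\ref{pre:Hdim_gap} $\overline M$ has Euclidean volume growth. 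Then $Y$ is a metric cone (Theorem~\ref{pre:vol_cone}), Proposition~\ref{pre:isom_split} and Corollary~\ref{pre:cone_orb_dim} confine the $\mathbb{R}^{n-1}$-orbit to the Euclidean factor, hence $Y$ splits off $\mathbb{R}^{n-1}$, and Theorem~\ref{pre:flat} gives $\overline M\cong\mathbb{R}^n$. The argument lands directly on the cover and never needs to transport an almost-splitting from a rescaled limit back to $\overline M$.
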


One of the main purposes of this paper is to give an asymptotic condition in terms of $k$ that not only implies $b_1(M)\le n-k$, but also has rigidity when equality holds. 

\begin{rem}
Let us remark that the volume growth order at least $k$ condition, which was considered by Anderson \cite{anderson}, clearly does not have a rigidity counterpart. In fact, we have the following examples.\\
(1) By taking the metric product of a flat torus $\mathbb{T}^{n-k}$ with an open manifold $N^k$ with positive sectional curvature and Euclidean volume growth, $M^n=\mathbb{T}^{n-k}\times N^k$ has volume growth of order $k$ and $b_1(M)=n-k$, but $M$ is not flat.\\
(2) In \cite{nabonnand}, Nabonnand constructed an example $M^4$ of positive Ricci curvature with $b_1(M)=1$ and volume growth  order $3$.
\end{rem}

As the first main result, we use the line splitting structure on asymptotic cones of $M$ to give first Betti number estimate and rigidity. Recall that an \textit{asymptotic cone}, or a \textit{tangent cone at infinity}, of $M$ is the Gromov-Hausdorff limit of
$$(r_i^{-1}M,p)\overset{GH}\longrightarrow (X,x),$$
where $r_i\to\infty$. When $M$ has $\mathrm{Ric}\ge 0$, the limit always exists after passing to a subsequence. We emphasize that $M$ may have non-unique asymptotic cones; in other words, the limit $(X,x)$ may depend on the sequence $r_i$. If $X$ contains a line, then it splits off this line isometrically by Cheeger-Colding splitting theorem \cite{CC97}.

\begin{mainthm}\label{thm:main_b1}
	Let $M^n$ be an open manifold with $\mathrm{Ric}\ge 0$. Suppose that an asymptotic cone of $M$ properly contains a Euclidean $\mathbb{R}^{k-1}$, where $k\ge 1$. Then $b_1(M)\le n-k$; moreover, equality holds if and only if $M$ is flat and isometric to $\mathbb{R}^{k}\times \mathbb{T}^{n-k}$ or $\mathbb{R}^{k-1}\times N^{n-k+1}$, where $N$ is flat and diffeomorphic to $\mathbb{M}^2 \times \mathbb{T}^{n-k-1}$.
\end{mainthm}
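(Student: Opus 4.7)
My plan is to analyze the equivariant asymptotic cone and combine a splitting-dimension count with a reduction to Theorem~\ref{thm:codim1}. Along a subsequence $r_i\to\infty$ realizing the hypothesis, take the pointed equivariant Gromov--Hausdorff limit
\[
(r_i^{-1}\widetilde{M},\Gamma,\tilde p)\ \overset{GH}{\longrightarrow}\ (\widetilde{X},G,\tilde x),
\]
so that $\widetilde{X}/G=X$ is an asymptotic cone of $M$ properly containing $\mathbb{R}^{k-1}$. Cheeger--Colding splitting applied in the limit yields $X\cong \mathbb{R}^{k-1}\times Y$ with $Y$ non-trivial ($\dim Y\ge 1$). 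The $k-1$ independent lines of $X$ through $x$ lift to $k-1$ independent lines in $\widetilde{X}$ through $\tilde x$, giving a $G$-invariant decomposition $\widetilde{X}\cong \mathbb{R}^{k-1}\times \widetilde{X}'$ on which $G$ must act trivially on the first factor (otherwise the quotient would not retain the full $\mathbb{R}^{k-1}$).

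Next I would produce a $b_1(M)$-dimensional translational subgroup of $G$. Choose lifts $\gamma_1,\dots,\gamma_{b_1}\in \Gamma$ of a basis of the torsion-free part $\Gamma^{\mathrm{ab}}/\mathrm{tors}\cong\mathbb{Z}^{b_1}$. By a short generator/escape rate analysis under $\mathrm{Ric}\ge 0$ (as developed in the first author's work on $\pi_1$ of open manifolds), after passing to a further subsequence the powers $\gamma_j^{\lfloor c_j r_i\rfloor}$ for $c_j\in\mathbb{R}$ converge to a connected $\mathbb{R}^{b_1}$ subgroup $G_0\subseteq G$ acting by pure translations on $\widetilde{X}$. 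Because $G_0$ is trivial on the $\mathbb{R}^{k-1}$-factor, these translations live in the complementary factor, so $\widetilde{X}'\cong \mathbb{R}^{b_1}\times Z$ and
\[
\widetilde{X}\cong \mathbb{R}^{k-1+b_1}\times Z,\qquad Y\cong Z/(G/G_0),
\]
with $G/G_0$ discrete. Since $Y$ is non-trivial, $\dim Z\ge 1$, and since $\widetilde{X}$ is a pointed GH limit of $n$-manifolds with $\mathrm{Ric}\ge 0$, $\dim\widetilde{X}\le n$. Hence
\[
n\ \ge\ (k-1)+b_1+\dim Z\ \ge\ k+b_1,
\]
i.e.\ $b_1(M)\le n-k$.

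For the rigidity case $b_1(M)=n-k$, all inequalities are forced to be equalities: $\dim Z=1$, so $Z\cong\mathbb{R}$ (the only non-compact one-dimensional $\mathrm{Ric}\ge 0$ limit covering the non-trivial $Y$), and $\widetilde{X}\cong \mathbb{R}^n$. The convergence $r_i^{-1}\widetilde{M}\to \mathbb{R}^n$ is therefore non-collapsed, and rigidity of Bishop--Gromov forces $\widetilde{M}\cong \mathbb{R}^n$ isometrically, so $M$ is flat. The trivial $\Gamma$-action on the limiting $\mathbb{R}^{k-1}$-factor, combined with the Bieberbach structure of the discrete $\Gamma\subset \mathrm{Isom}(\mathbb{R}^n)$, forces $\Gamma$ to fix a $\mathbb{R}^{k-1}$-direction of $\widetilde{M}$ pointwise, yielding a metric product $M\cong \mathbb{R}^{k-1}\times N$ with $N$ a flat open $(n-k+1)$-manifold satisfying $b_1(N)=n-k=(n-k+1)-1$. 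Theorem~\ref{thm:codim1} then gives either $N\cong \mathbb{R}\times \mathbb{T}^{n-k}$ (so $M\cong \mathbb{R}^k\times\mathbb{T}^{n-k}$) or $N$ diffeomorphic to $\mathbb{M}^2\times\mathbb{T}^{n-k-1}$, as claimed. The converse direction is a direct asymptotic-cone computation for the two model examples.

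The main obstacle I anticipate is establishing the \emph{additivity} of Euclidean factors in $\widetilde{X}$: one must show that the $\mathbb{R}^{k-1}$ lifted from the splitting of $X$ is genuinely transverse to the $\mathbb{R}^{b_1}$ produced by asymptotic translations, so that the two combine into $\mathbb{R}^{k-1+b_1}$. This requires an equivariant Cheeger--Colding splitting argument that tracks simultaneously how lines in $X$ lift to $\widetilde{X}$ and how $G_0$ (with rank exactly $b_1$) distributes across the factors --- in particular, the delicate point is that the short-generator translations cannot act on the already-fixed $\mathbb{R}^{k-1}$-factor. A secondary difficulty in rigidity is promoting the asymptotic splitting of $\widetilde{X}$ to the honest metric splitting $M\cong \mathbb{R}^{k-1}\times N$ without passing to a finite cover; this exploits flatness together with the equality $b_1=n-k$ to pin down a genuine pointwise-fixed $\mathbb{R}^{k-1}$-subspace of $\Gamma\subset\mathrm{Isom}(\mathbb{R}^n)$ via the de Rham decomposition.
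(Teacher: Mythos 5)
Your overall strategy — produce an $\mathbb{R}^{b_1}$ inside the limit group, combine it with the $\mathbb{R}^{k-1}$ coming from the quotient, and count dimensions — is indeed in the spirit of the paper. But there are two genuine gaps, one structural and one substantive.

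The structural issue: you work with the universal cover $\widetilde{M}$ and pick lifts $\gamma_1,\dots,\gamma_{b_1}\in\Gamma$ of a basis of the free part of $\Gamma^{\mathrm{ab}}$. In a nonabelian $\Gamma$ these elements need not commute and need not generate a $\mathbb{Z}^{b_1}$, so they do not hand you a translation lattice to pass to the limit. The paper instead passes to the abelianized cover $\overline{M}=\widetilde{M}/[\Gamma,\Gamma]$ with deck group $\Lambda=\Gamma/[\Gamma,\Gamma]$, where $\mathbb{Z}^{b}$ sits as a genuine closed subgroup; this is what makes Lemma~\ref{lem:topol_dim} applicable.

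The substantive gap is the claim that the limit $G_0\cong\mathbb{R}^{b_1}$ ``acts by pure translations'' and therefore forces a splitting $\widetilde{X}\cong\mathbb{R}^{k-1+b_1}\times Z$. A closed $\mathbb{R}$-subgroup of the isometry group of a (possibly collapsed) Ricci limit space acting freely does not, in general, split off a Euclidean line; even in the smooth case a free isometric $\mathbb{R}$-action need not produce a metric splitting (think of any warped product $dx^2+f(x)^2\,dy^2$). The paper's argument deliberately avoids this: Lemma~\ref{lem:topol_dim} only produces a closed $\mathbb{R}^{b}$ in the limit group and concludes that the orbit $Gy$ has \emph{topological} dimension $\ge b$; it then passes to a tangent cone $\mathbb{R}^k\times Q$ at a \emph{regular} point $q$ of $Z$, where the orbit becomes $\{0\}\times Q$, and runs the dimension count $k+b\le \dimT(\mathbb{R}^k\times Q)\le\dimH\le n$. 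No splitting of $Y$ or $\widetilde{X}$ by the orbit factor is claimed or needed at this stage. The splitting of the Euclidean factor is obtained only \emph{after} establishing noncollapse (forced by $b=n-k$ and the Hausdorff dimension gap theorem), so that $Y$ is a metric cone $\mathbb{R}^{k-1}\times C(X)$; then $\mathrm{Isom}(Y)\cong\mathrm{Isom}(\mathbb{R}^{k-1})\times\mathrm{Isom}(X)$, and a closed $\mathbb{R}^{b}$ living in $\mathrm{Isom}(X)$ forces the remaining Euclidean factor. You correctly flag the ``additivity'' of Euclidean factors as the crux, but frame it as a transversality issue; the actual difficulty is that the $\mathbb{R}^{b_1}$ does not give a Euclidean factor at all until you are in the noncollapsed cone setting, and the tangent-cone-at-a-regular-point device is how the paper bridges that gap. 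Finally, the ``short generator/escape rate analysis'' you invoke is essentially the content of Lemma~\ref{lem:topol_dim}, whose proof is a nontrivial connectedness-plus-induction argument resting on Colding--Naber's theorem that $\mathrm{Isom}$ of a Ricci limit space is a Lie group; it is not something that can be cited as folklore.
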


\begin{rems}\label{rem:thm_b1}
We give some remarks on Theorem \ref{thm:main_b1}.\\
(1) When $k=1$, the condition holds trivially, thus Theorem \ref{thm:main_b1} recovers Theorem \ref{thm:codim1}. Our proof of Theorem \ref{thm:main_b1} is distinct from that of Theorem \ref{thm:codim1} in \cite{Ye}. \\ 
(2) When $k=n$, Theorem \ref{thm:main_b1} states that $M$ is isometric to $\mathbb{R}^n$.\\
(3) In Theorem \ref{thm:main_b1}, we only require that one of the asymptotic cones properly contains an $\mathbb{R}^{k-1}$. Such an asymptotic cone is isometric to $\mathbb{R}^{k-1}\times Z$, where $Z$ is not a point.
\end{rems}

Let $h_1(M)$ be the dimension of the space of linear growth harmonic functions (including constant functions) on $M$. By the work of Cheeger-Colding-Minicozzi \cite{ccm}, if $h_1(M)=k+1$, then every asymptotic cone of $M$ contains a Euclidean $\mathbb{R}^k$. Since $\mathbb{R}^{k}$ properly contains an $\mathbb{R}^{k-1}$, together with Theorem \ref{thm:main_b1}, we derive the following corollary, which unifies and generalizes the maximal $h_1(M)$-rigidity by Cheeger-Colding-Minicozzi \cite{ccm} and classical $b_1(M)$-rigidity by Bochner \cite{bochner1,bochner2}.

\begin{cor}\label{cor:harm}
   Let $M^n$ be a complete manifold with $\mathrm{Ric}\ge 0$. Then $$h_1(M)+b_1(M)\le n+1;$$
   moreover, equality holds if and only if $M$ is flat and isometric to  $\mathbb{R}^{k}\times \mathbb{T}^{n-k}$, where $k=h_1(M)-1$.
\end{cor}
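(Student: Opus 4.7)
The strategy is to combine the Cheeger--Colding--Minicozzi structure result cited just above with Theorem \ref{thm:main_b1}. Set $k:=h_1(M)-1\ge 0$. If $k\ge 1$, then \cite{ccm} gives that every asymptotic cone of $M$ isometrically contains a Euclidean $\mathbb{R}^k$, which in particular properly contains $\mathbb{R}^{k-1}$; Theorem \ref{thm:main_b1} then yields $b_1(M)\le n-k$, so $h_1(M)+b_1(M)\le (k+1)+(n-k)=n+1$. The case $k=0$ must be handled separately: if $M$ is closed, $h_1(M)=1$ by the maximum principle and Bochner's theorem \cite{bochner1,bochner2} gives $b_1(M)\le n$ with equality only when $M$ is isometric to $\mathbb{T}^n=\mathbb{R}^0\times\mathbb{T}^n$; if $M$ is open with $h_1(M)=1$, Yau's linear volume growth \cite{yau2} and Anderson's estimate \cite{anderson} yield $b_1(M)\le n-1$, so $h_1(M)+b_1(M)\le n<n+1$.

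For the rigidity direction, assume $h_1(M)+b_1(M)=n+1$ with $k\ge 1$; then $b_1(M)=n-k$, and Theorem \ref{thm:main_b1} says $M$ is flat and isometric to either $\mathbb{R}^k\times\mathbb{T}^{n-k}$ or $\mathbb{R}^{k-1}\times N^{n-k+1}$ with $N$ flat and diffeomorphic to $\mathbb{M}^2\times\mathbb{T}^{n-k-1}$. A direct computation on the universal cover confirms $h_1(\mathbb{R}^k\times\mathbb{T}^{n-k})=k+1$, consistent with the hypothesis. To rule out the second case, I would pull back to $\widetilde{M}=\mathbb{R}^n$: linear-growth harmonic functions on $M$ correspond to affine functions $L\cdot x+c$ on $\mathbb{R}^n$ invariant under $\pi_1(M)=\pi_1(N)=\mathbb{Z}^{n-k}$, which acts trivially on the $\mathbb{R}^{k-1}$ factor and via the deck group of $N$ on the $\mathbb{R}^{n-k+1}$ factor. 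Since $N$ is non-orientable, the linear-part homomorphism $\pi_1(N)\to O(n-k+1)$ has image of order two, generated by a hyperplane reflection $\sigma_L$ (the rank of the translation sublattice forces the $+1$-eigenspace of $\sigma_L$ to be $(n-k)$-dimensional). Commutativity of $\mathbb{Z}^{n-k}$ constrains the remaining generators to be pure translations lying in this fixed hyperplane, and together with $\sigma^2$ they generate a full-rank lattice there. Invariance under this lattice kills the $\mathbb{R}^{n-k+1}$-component of $L$, so the space of invariant affine functions has dimension $(k-1)+1=k$, giving $h_1(M)=k$ and contradicting $h_1(M)=k+1$. Hence the Möbius case does not occur and $M$ is isometric to $\mathbb{R}^k\times\mathbb{T}^{n-k}$.

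The main technical point is this $h_1$ computation in the Möbius-type case, which rests on unpacking the abelian deck group's action on $\mathbb{R}^{n-k+1}$: that non-orientability forces a hyperplane-reflection generator and that the commuting translations fill out the reflection's fixed hyperplane as a full-rank lattice. Once this structural fact is in place, the contradiction with $h_1(M)=k+1$ closes the rigidity.
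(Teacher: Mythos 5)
Your proof is correct, and the overall route (combine \cite{ccm} with Theorem~\ref{thm:main_b1}, handle $k=0$ separately) is exactly how the paper intends the corollary to be derived. The one place where you go beyond the paper's one-line derivation is in ruling out the second alternative of Theorem~\ref{thm:main_b1}, namely $M=\mathbb{R}^{k-1}\times N^{n-k+1}$ with $N$ flat and diffeomorphic to $\mathbb{M}^2\times\mathbb{T}^{n-k-1}$. Your computation of $h_1$ on the universal cover is sound: since the soul of $N$ is the flat torus $\mathbb{T}^{n-k}$, the deck group acts by pure translations on the lift $H\cong\mathbb{R}^{n-k}$ of the soul, so by commutativity every rotational part $\rho(\gamma)$ restricts to the identity on $H$; a nontrivial $\rho(\gamma)$ must therefore act as $-1$ on the one-dimensional normal line (so the holonomy is $\mathbb{Z}_2$ generated by a hyperplane reflection, as you say), and any $\Gamma$-invariant linear form has zero normal component and, because the translations fill $H$, zero tangential component. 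This gives $h_1(\mathbb{R}^{k-1}\times N)=k\ne k+1$, a contradiction. (Minor slip: ``$\sigma^2$'' in your write-up should be $\gamma^2$, the square of the deck transformation covering $\sigma$, which is a translation by twice the $H$-component of its translational part.) A quicker way to dispose of the M\"obius case, perhaps the one the authors had in mind, is to observe that it has a unique asymptotic cone $\mathbb{R}^{k-1}\times[0,\infty)$, which contains no Euclidean $\mathbb{R}^k$; this directly contradicts the \cite{ccm} conclusion from $h_1(M)=k+1$ without unwinding the deck action. Both arguments are valid; yours is more self-contained, the alternative reuses the structural input already cited.
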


In general, one cannot expect flatness when $b_1(M)=n-k-1$ in Theorem \ref{thm:main_b1}. For example, let us consider $N^{k+1}=\mathbb{S}^{k-1} \times \mathbb{R}^2$ with the Schwarzschild metric, where $k\ge 3$. Such a metric is Ricci-flat and has a unique asymptotic cone isometric to $\mathbb{R}^{k}$, thus properly contains an $\mathbb{R}^{k-1}$. Then the metric product $M^n=\mathbb{T}^{n-k-1} \times N^{k+1}$ is asymptotically $\mathbb{R}^{k}$ and $b_1(M)=n-k-1$, but $M$ is not flat. In the special case that $M$ has an asymptotic cone as a line, we indeed have rigidity only requiring $b_1(M)=n-2$; more precisely, combining with Theorem \ref{thm:main_b1} ($k=2$), we have:

\begin{mainthm}\label{thm:codim2}
	Let $M^n$ be an open manifold with $\mathrm{Ric}\ge 0$. Suppose that an asymptotic cone of $M$ contains a line and $b_1(M)=n-2$. Then $M$ is flat and isometric to one of the following:\\
    (1) $\mathbb{R}^2 \times \mathbb{T}^{n-2}$;\\
    (2) $\mathbb{R}\times N^{n-1}$, where $N$ is flat and diffeomorphic to $\mathbb{M}^2 \times \mathbb{T}^{n-3}$;\\
    (3) $\mathbb{R}\times K^{n-1}$, where $K$ is a closed and flat manifold with $b_1(K)=n-2$.
\end{mainthm}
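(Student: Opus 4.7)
Since an asymptotic cone $X$ of $M$ contains a line, the Cheeger--Colding splitting theorem writes $X=\mathbb{R}\times Z$ isometrically. If $Z$ is not a single point, then $X$ properly contains a Euclidean $\mathbb{R}^{1}$, so Theorem~\ref{thm:main_b1} with $k=2$ applies. Combined with the equality case $b_{1}(M)=n-2$, this forces $M$ to be isometric either to $\mathbb{R}^{2}\times\mathbb{T}^{n-2}$ or to $\mathbb{R}\times N^{n-1}$ with $N$ flat and diffeomorphic to $\mathbb{M}^{2}\times\mathbb{T}^{n-3}$, yielding conclusions (1) and (2) of Theorem~\ref{thm:codim2}.

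\textbf{The remaining case $X=\mathbb{R}$.} The first goal is to show that $M$ itself contains a line. I pick $q_{i}^{\pm}\in M$ approximating $\pm 1$ under the scaling $(r_{i}^{-1}M,p)\to(\mathbb{R},0)$ and take minimal geodesics $\sigma_{i}$ from $q_{i}^{+}$ to $q_{i}^{-}$; these have length $\sim 2r_{i}$, and the convergence forces $\sigma_{i}$ to pass within $o(r_{i})$ of $p$. Reparameterizing each $\sigma_{i}$ around its closest point to $p$ and extracting an Arzel\`a--Ascoli limit produces a bi-infinite minimizing geodesic. Equivalently, one can first argue that $M$ has at least two ends: otherwise, for large $R$ one could connect $q_{i}^{+}$ to $q_{i}^{-}$ by a path in $M\setminus B_{R}(p)$, and rescaling would produce a path in $\mathbb{R}$ avoiding the origin, which is impossible; then the line follows from Cheeger--Gromoll. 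Once a line is secured, the Cheeger--Gromoll splitting theorem gives an isometric decomposition $M=\mathbb{R}\times K^{n-1}$ with $K$ complete and of nonnegative Ricci curvature. Comparing asymptotic cones, the asymptotic cone of $K$ must be a point, so $K$ is compact, and the K\"unneth decomposition gives $b_{1}(K)=b_{1}(M)=n-2$. I expect the extraction of a line to be the main technical obstacle, since the closest-point midpoints $p_{i}$ of $\sigma_{i}$ are only guaranteed to satisfy $d(p_{i},p)=o(r_{i})$, not $O(1)$, so the Arzel\`a--Ascoli step needs a careful refinement (or the two-ends route via a quantitative excess-style estimate).

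\textbf{Flatness of $K$ and conclusion.} With $K^{n-1}$ closed, $\mathrm{Ric}\geq 0$, and $b_{1}(K)=n-2$, I invoke the Cheeger--Gromoll structure theorem for closed manifolds of nonnegative Ricci curvature: the universal cover splits isometrically as $\widetilde K=\mathbb{R}^{j}\times\widetilde N$ with $\widetilde N$ closed and simply connected, and $\pi_{1}(K)$ is virtually $\mathbb{Z}^{j}$. Since the rank of the abelianization of a virtually $\mathbb{Z}^{j}$ group is at most $j$, one has $b_{1}(K)\leq j$, so $n-2\leq j\leq n-1=\dim\widetilde K$. This forces $\dim\widetilde N\leq 1$; as a closed simply connected manifold of dimension at most one is a point, $j=n-1$, $\widetilde K=\mathbb{R}^{n-1}$, and $K$ is flat. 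This gives conclusion (3) and completes the proof.
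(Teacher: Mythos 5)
Your dichotomy (proper containment vs.\ $X=\mathbb{R}$) and the analysis of the compact factor $K$ are both correct and match the paper. The gap is exactly where you flagged it: in the case $X=\mathbb{R}$ you need to produce a line, and neither of the two routes you sketch actually closes this gap.

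The Arzel\`a--Ascoli route breaks down for the reason you already identified: the minimal geodesics $\sigma_{i}$ from $q_{i}^{+}$ to $q_{i}^{-}$ are only known to satisfy $d(\sigma_{i},p)=o(r_{i})$, so you cannot recenter and extract a limiting line in $M$. The two-ends route is also flawed: if $M$ had one end, a path in $M\setminus B_{R}(p)$ joining $q_{i}^{+}$ to $q_{i}^{-}$ is permitted to leave the ball $B_{\epsilon^{-1}r_{i}}(p)$ entirely, and pointed Gromov--Hausdorff convergence to $\mathbb{R}$ gives you no control over such excursions --- so you do not obtain a path in $\mathbb{R}$ avoiding the origin, and there is no contradiction. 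In short, an asymptotic cone equal to $\mathbb{R}$ does not \emph{a priori} force $M$ to contain a line or to have two ends, and your argument never rules this out.

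The paper's resolution is different and is the key missing idea. Set $d_{i}=d_{M}(c_{i},p)$ (using the paper's notation $c_i$ for your $\sigma_i$). If $d_{i}$ stays bounded, the Arzel\`a--Ascoli argument works and $M$ splits off a line, which is your case (3). If instead $d_{i}\to\infty$, observe that $d_{i}\ll r_{i}$ (since after scaling by $r_{i}^{-1}$ the geodesic $c_{i}$ converges to the segment $[-1,1]$ through the origin). Now rescale by $d_{i}^{-1}$ rather than $r_{i}^{-1}$: in the limit $(d_{i}^{-1}M,p,c_{i})\to(Y,y,c_{\infty})$, the curves $c_{\infty}$ is a full line at distance exactly $1$ from $y$. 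Thus $Y$ is an asymptotic cone of $M$ that \emph{properly} contains a line, and the conclusion follows from Theorem~\ref{thm:main_b1} with $k=2$, landing you back in cases (1) or (2). This intermediate rescaling is what lets one avoid deciding whether $M$ itself has a line, and it is the step your proposal does not supply.
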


The case of containing a Euclidean $\mathbb{R}^2$ is unclear to the authors.

\begin{quest}\label{quest:codim3}
    Let $M^n$ be an open manifold with $\mathrm{Ric}\ge 0$. If an asymptotic cone of $M$ contains a Euclidean $\mathbb{R}^2$ and $b_1(M)=n-3$, then is $M$ flat?
\end{quest}

Let us also mention the question below about $b_1(M)=n-2$, which was raised in \cite{Ye}. It has an affirmative answer when $n=3$ \cite{Liu13} or $\sec\le K$ \cite{anderson}.

\begin{quest}\label{quest:codim2}
   Let $M$ be an open $n$-manifold with $\mathrm{Ric}\ge 0$ and $b_1(M)=n-2$. Does the universal cover $\widetilde{M}$ split isometrically as $\mathbb{R}^{n-2} \times N^2$?
\end{quest}

Besides the first Betti number, which describes the algebraic growth of $H_1(M,\mathbb{Z})$, another notion often considered is the growth of the orbit \cite{anderson,Ye}. Below we always write $\Gamma=\pi_1(M,p)$, which acts isometrically and freely on the Riemannian universal cover $(\widetilde{M},\Tilde{p})$ of $(M,p)$. Let us consider the growth of $\Gamma$-orbit at $\tilde{p}$. Denote by 
$$\Gamma(R)=\{g\in \Gamma| d(\tilde{p},g\tilde{p})\le R\}.$$
We say that $\Gamma$ has \textit{stable orbit growth of order} $l$, if there are $0<c_1<c_2$ such that
$$c_1 R^l\le \#\Gamma(R) \le  c_2 R^l$$
for all $R\ge 1$. When $M$ has $\mathrm{sec}\ge 0$, it follows from Cheeger-Gromoll soul theorem that $\Gamma$ has stable orbit growth with an integer growth order. For $\mathrm{Ric}\ge 0$, the orbit growth order in general are not integers even assuming $\Gamma=\mathbb{Z}$; for example, Nabonnand's examples \cite{nabonnand} and the related constructions considered in \cite{Pan21,PanWei} have $\pi_1(M)=\mathbb{Z}$ and stable orbit growth, but any number $l\ge 1$ could be the orbit growth order given a sufficiently large dimension.

Given dimension $n$, by a volume packing argument by Dirichlet domains and the fact that $M$ has at least linear volume growth, the orbit growth $\#\Gamma(R)$ is always bounded above by $c_2R^{n-1}$; see \cite{anderson}. The rigidity problem when achieving this upper bound was considered by the second-named author \cite{Ye}. 

\begin{thm}\cite{Ye} \label{thm:geo_codim1} Let $M$ be an open manifold with $\Ric\geq0$. Then $\Gamma$ has stable orbit growth of order $n-1$ if and only if $M$ is flat with an $n-1$ dimensional soul.
\end{thm}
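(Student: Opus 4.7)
The ``if'' direction is routine: if $M$ is flat with an $(n-1)$-dimensional soul, then $\widetilde{M}=\mathbb{R}^n$ and $\Gamma$ is a Bieberbach subgroup of $\mathrm{Isom}(\mathbb{R}^n)$ whose maximal translation subgroup has rank $n-1$, so a lattice-point count gives $\#\Gamma(R)\asymp R^{n-1}$. For the ``only if'' direction, my plan is a volume-packing argument, followed by an equivariant splitting on an asymptotic cone of $\widetilde{M}$, and finally a pullback to $\widetilde{M}$ itself.

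First I would establish that $\widetilde{M}$ has Euclidean volume growth. Let $D\subset\widetilde{M}$ be the Dirichlet fundamental domain centered at $\tilde{p}$. For each $g\in\Gamma(R/2)$ the translates $g(D\cap B_{R/2}(\tilde{p}))$ lie in $B_R(\tilde{p})$ and overlap only on a null set, while the covering projection restricts to a volume-preserving bijection from $D\cap B_{R/2}(\tilde{p})$ onto $B_{R/2}(p)$. Hence
\[\mathrm{vol}(B_R(\tilde{p}))\ge \#\Gamma(R/2)\cdot\mathrm{vol}(B_{R/2}(p)).\]
Combining the assumed lower orbit bound with Yau's linear volume lower bound $\mathrm{vol}(B_{R/2}(p))\ge cR$ yields $\mathrm{vol}(B_R(\tilde{p}))\ge cR^n$. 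By Cheeger--Colding, every asymptotic cone $(\widetilde{X},\tilde{x})$ of $\widetilde{M}$ is then an $n$-dimensional metric cone.

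Next, I would pass to an equivariant Gromov--Hausdorff limit of $(r_i^{-1}\widetilde{M},\tilde{p},\Gamma)$ to a triple $(\widetilde{X},\tilde{x},G)$ with $G$ a closed subgroup of $\mathrm{Isom}(\widetilde{X})$. The stable orbit growth of order $n-1$ forces the orbit $G_0\tilde{x}$ to have Hausdorff dimension exactly $n-1$; an equivariant splitting argument then identifies it with a Euclidean $\mathbb{R}^{n-1}$, giving $\widetilde{X}=\mathbb{R}^{n-1}\times Y$ with $Y$ a $1$-dimensional cone (either $\mathbb{R}$ or a half-line). Using pullback techniques for equivariant cone splittings in the spirit of the methods underlying Theorem \ref{thm:main_b1}, this lifts to an isometric splitting $\widetilde{M}=\mathbb{R}^{n-1}\times N$; completeness, simple connectivity, and $\mathrm{Ric}\ge 0$ then force $N=\mathbb{R}$, so $\widetilde{M}=\mathbb{R}^n$ and $M$ is flat. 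Finally, the orbit growth being of order exactly $n-1$ forces the translation subgroup of $\Gamma$ to have rank $n-1$, giving the required $(n-1)$-dimensional soul.

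The main obstacle I expect is the pullback step: converting an isometric splitting of the abstract cone $\widetilde{X}$ into an isometric splitting of $\widetilde{M}$ itself. The $n-1$ asymptotic translational directions produced by $G_0$ must be realized as $n-1$ genuinely independent line-like directions inside $\widetilde{M}$, which is not formal without sectional curvature control. A secondary subtlety is ruling out that $G_0\tilde{x}$ is a more general $(n-1)$-dimensional homogeneous limit rather than a flat Euclidean factor; here maximality of the Hausdorff dimension and Colding's volume convergence are crucial. Going through the first Betti number route via Theorem \ref{thm:codim1} would not suffice by itself, since a flat manifold with an $(n-1)$-dimensional soul can have torsion in $\Gamma^{ab}$ and hence $b_1(M)<n-1$.
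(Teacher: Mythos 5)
Your strategy aligns closely with the paper's proof of the more general Theorem~\ref{thm:geo_rigid}, of which this statement is the $k=0$ special case (see Remark~\ref{rem:thm_geo}(1)). The Dirichlet-domain packing combined with Yau's linear volume growth to get $\mathrm{vol}(B_R(\tilde p))\gtrsim R^n$ is exactly the argument the paper invokes, and Proposition~\ref{prop:stable_dimH} is indeed the tool that converts stable orbit growth of order $n-1$ into $\dimH(G\tilde x)=n-1$ in the equivariant asymptotic cone. The way the paper extracts the Euclidean factor is slightly different from what you describe: rather than directly ``identifying'' the orbit with $\mathbb{R}^{n-1}$, one writes the cone as $\widetilde X=\mathbb{R}^m\times C(X)$ with $\operatorname{diam}(X)<\pi$ and uses the isometry-group splitting of such cones (Proposition~\ref{pre:isom_split}) to conclude that the orbit of the vertex is \emph{contained} in $\mathbb{R}^m\times\{v\}$; the Hausdorff dimension bound then forces $m\ge n-1$. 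This sidesteps the ``secondary subtlety'' you raise.

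The genuine gap is the step you correctly flag as your main obstacle --- and you would indeed be stuck if you tried to pull back the asymptotic splitting to $\widetilde M$. The paper never does a pullback. Once $\widetilde M$ is known to have Euclidean volume growth and some asymptotic cone of $\widetilde M$ splits off $\mathbb{R}^{n-1}$ isometrically, Theorem~\ref{pre:flat} (the Colding/Cheeger--Colding volume-convergence and codimension-$2$ singular set rigidity) gives directly that $\widetilde M$ is isometric to $\mathbb{R}^n$; no lines in $\widetilde M$ itself need to be exhibited, and no sectional curvature control is needed. Your plan reaches for a technique (``pullback of an equivariant cone splitting'') that you correctly suspect cannot be made rigorous here, when the tool that actually closes the argument is the maximal-Euclidean-factor rigidity for non-collapsed asymptotic cones, which you mention only in passing and for the wrong purpose. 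Once $\widetilde M=\mathbb{R}^n$, your final step (Bieberbach structure $\Rightarrow$ $(n-1)$-dimensional soul from the orbit growth) is fine and matches Proposition~\ref{open flat}(1).
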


Generalizing Theorem \ref{thm:geo_codim1} to cover orbit growth of intermediate order, we prove the following result.

\begin{mainthm}\label{thm:geo_rigid}
Let $M$ be a complete $n$-manifold with $\Ric\geq 0$. Suppose that $\Gamma$ has stable orbit growth of order $l$ and an asymptotic cone of $M$ contains a Euclidean $\mathbb{R}^{k}$, where $k\ge 0$. Then the following hold.\\
(1) $l\leq n-k$; moreover, $l>n-k-1$ if and only if $M$ is isometric to $\mathbb{R}^k\times N^{n-k}$, where $N$ is flat and closed.\\
(2) Assume in addition that $\widetilde{M}$ has Euclidean volume growth. Then $l>n-k-2$ if and only if $M$ is isometric to $\mathbb{R}^k \times N^{n-k}$, where $N$ is flat and either closed or open with an $n-k-1$ dimensional soul.
\end{mainthm}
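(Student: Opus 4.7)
The plan is to analyze the universal cover $(\widetilde{M},\tilde{p})$, whose asymptotic cones inherit the $\mathbb{R}^k$-splitting from those of $M$ by equivariant Cheeger-Colding arguments, and to extract rigidity from the interplay between the stable orbit growth $l$ and the volume structure of $\widetilde{M}$. The main tool is the Dirichlet-domain volume comparison: letting $\mathcal{D}$ be a Dirichlet fundamental domain at $\tilde{p}$, the projection $\pi\colon\mathcal{D}\to M$ is a measure-preserving bijection off a null set; since the translates $g\mathcal{D}\cap B_{R/2}(g\tilde{p})$ for $g\in\Gamma(R/2)$ are pairwise disjoint, contained in $B_R(\tilde{p})$, and each projects isometrically onto $B_{R/2}(p)$, one obtains
\[
   \mathrm{vol}(B_R(\tilde p))\;\ge\;\#\Gamma(R/2)\cdot\mathrm{vol}(B_{R/2}(p)).
\]

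For the upper bound $l\le n-k$, this comparison is combined with two inputs. Since an asymptotic cone of $M$ splits off $\mathbb{R}^k$, a standard Cheeger-Colding argument yields a sequence $r_i\to\infty$ along which $\mathrm{vol}(B_{r_i}(p))\ge c\,r_i^k$; the stable orbit growth gives $\#\Gamma(r_i/2)\ge c_1\,r_i^l$. The displayed inequality then provides $\mathrm{vol}(B_{r_i}(\tilde p))\ge c_2\,r_i^{k+l}$, which together with Bishop-Gromov on $\widetilde{M}$ forces $l\le n-k$.

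For the rigidity of part~(1), assume $l>n-k-1$. Equivariant Cheeger-Colding identifies an asymptotic cone of $\widetilde{M}$ along $r_i$ as $\mathbb{R}^k\times\widetilde{Z}$ with $\dimH\widetilde{Z}\le n-k$, the limit $\Gamma$-orbit lying in $\widetilde{Z}$ with growth of order $l$. Since Hausdorff dimensions of Ricci-nonnegative limits are integer-valued, $l>n-k-1$ rules out $\dimH\widetilde{Z}\le n-k-1$, forcing $\dimH\widetilde{Z}=n-k$; a gap-type argument in this near-maximal-volume regime then upgrades $\widetilde{Z}$ to $\mathbb{R}^{n-k}$. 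Hence $\widetilde{M}$ attains maximal volume growth, Bishop-Gromov rigidity gives $\widetilde{M}=\mathbb{R}^n$, and $\Gamma\subset\mathrm{Isom}(\mathbb{R}^n)$ is a freely-acting discrete group with orbit growth of integer order $n-k$. By Bieberbach, $\Gamma$ is virtually a translation lattice $\mathbb{Z}^{n-k}$ in an $(n-k)$-plane $V$, and the persistence of the $\mathbb{R}^k$-factor in the cone of $M$ forces the finite rotational quotient to act trivially on $V^\perp$, yielding $M=\mathbb{R}^k\times N$ with $N$ closed Bieberbach. Alternatively, once $\widetilde{M}=\mathbb{R}^n$ is reached, one can read off $b_1(M)=n-k$ and invoke Theorem~\ref{thm:main_b1} to recover the same classification.

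For part~(2), the Euclidean volume growth of $\widetilde{M}$ bypasses the gap step: by Cheeger-Colding's volume cone theorem, every asymptotic cone of $\widetilde{M}$ is a metric cone, and combined with the $\mathbb{R}^k$-splitting it takes the form $\mathbb{R}^k\times C(W)$ with $C(W)$ an $(n-k)$-dimensional cone. The weaker bound $l>n-k-2$ now constrains the singular stratum of $C(W)$ to codimension at most one: either $C(W)=\mathbb{R}^{n-k}$, reducing to part~(1), or $C(W)$ splits further as $\mathbb{R}\times C(V)$ with $V$ a compact flat $(n-k-2)$-manifold, giving $M=\mathbb{R}^k\times N$ with $N$ flat and possessing an $(n-k-1)$-dimensional soul. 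The principal obstacle, in both parts, is the gap/rigidity step: the Nabonnand-type examples recalled above show that $l$ can be any real number $\ge 1$ even when $\pi_1(M)=\mathbb{Z}$, so the forcing of integer $l$ and the subsequent flat classification must be extracted from the equivariant geometry of $(\widetilde{M},\Gamma)$ together with the $\mathbb{R}^k$-splitting of the cone, rather than from any integrality of group-theoretic growth.
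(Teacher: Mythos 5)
Your high-level framework (equivariant Gromov--Hausdorff convergence, relating the orbit growth to the dimension of the limit orbit, using the cone's $\mathbb{R}^k$-splitting to constrain the remaining factor) matches the paper's, but several crucial links are wrong or missing, so the proof does not go through as written.

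First, your argument for $l\le n-k$ rests on the claim that the asymptotic cone of $M$ containing a Euclidean $\mathbb{R}^k$ forces $\mathrm{vol}(B_{r_i}(p))\ge c\,r_i^k$. This is not justified and is not used by the paper. The paper instead proves a precise lemma (Proposition \ref{prop:stable_dimH}): stable orbit growth of order $l$ forces the limit orbit $Gy$ to have \emph{Hausdorff dimension exactly $l$}; combined with $Gy\subseteq\{0^k\}\times Y'$ this gives $n\ge\dimH Y=k+\dimH Y'\ge k+l$ directly, with no need for a volume lower bound on $M$. You assume the orbit has ``growth of order $l$'' in the limit without establishing this nontrivial equality of Hausdorff dimension; that is exactly the content of Proposition \ref{prop:stable_dimH}, which requires the capacity estimate of Lemma \ref{lem:cont_est} and is not automatic.

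Second, the statement ``Hausdorff dimensions of Ricci-nonnegative limits are integer-valued'' is false, and the paper itself says so (the Pan--Wei examples recalled after Theorem \ref{pre:Hdim_gap} have non-integer Hausdorff dimension; Theorem \ref{thm:vary_Hdim} produces more). What \emph{is} true is the dichotomy $\dimH\in\{n\}\cup[0,n-1]$ of Theorem \ref{pre:Hdim_gap}, and that is what the paper uses: from $\dimH Y\ge k+l>n-1$ one concludes $\dimH Y=n$, hence noncollapse. Your conclusion happens to survive but for the wrong reason, and the wrong reason would lead you astray elsewhere.

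Third, and most seriously, ``a gap-type argument in this near-maximal-volume regime then upgrades $\widetilde Z$ to $\mathbb{R}^{n-k}$'' does not work. Noncollapse alone gives only that $Y$ is a metric cone of Hausdorff dimension $n$; there are plenty of $n$-dimensional metric cones that are not Euclidean, and no ``gap'' rules them out. The mechanism the paper uses is: write the cone $Y=\mathbb{R}^{k+m}\times C(X)$ with $C(X)$ lineless; by Proposition \ref{pre:isom_split} the isometry group of such a cone splits, so the orbit $Gy$ lies in $\mathbb{R}^{k+m}\times\{v\}$; combined with $Gy\subseteq\{0^k\}\times Y'$ and $\dimH(Gy)=l$, this forces $m\ge l$, hence the Euclidean factor of $Y$ has dimension $\ge k+l\ge n-1$, and Theorem \ref{pre:flat} then yields $\widetilde M=\mathbb{R}^n$. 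This cone-isometry-splitting step is the heart of the rigidity and is absent from your proposal. The same gap recurs in your treatment of part (2), where you jump from $l>n-k-2$ to a claimed ``codimension at most one'' structure on the singular stratum without invoking the orbit-in-the-Euclidean-factor argument.

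Finally, the remark that ``one can read off $b_1(M)=n-k$'' from $\widetilde M=\mathbb{R}^n$ is false: $M=\mathbb{R}^k\times K$ with $K$ a closed flat $(n-k)$-manifold has stable orbit growth $n-k$ but $b_1(M)=b_1(K)$ can be anything between $0$ and $n-k$, so you cannot pass to Theorem \ref{thm:main_b1} this way. Once $M$ is known to be flat and its asymptotic cone contains $\mathbb{R}^k$, the correct finish is the one the paper gives via Proposition \ref{open flat}: $M\cong\mathbb{R}^k\times N$ with $N$ flat, the orbit growth order equals the soul dimension of $N$ (so is an integer), and $l=n-k$ versus $l=n-k-1$ distinguishes $N$ closed from $N$ open with an $(n-k-1)$-dimensional soul.
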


\begin{rems}\label{rem:thm_geo}
We give some remarks on Theorem \ref{thm:geo_rigid}.\\
(1) Setting $k=0$ in Theorem \ref{thm:geo_rigid}(2), we recover Theorem \ref{thm:geo_codim1}. Indeed, if $\Gamma$ has stable orbit growth of order $n-1$, then by a volume argument $\widetilde{M}$ has Euclidean volume growth and thus Theorem \ref{thm:geo_rigid}(2) applies.\\
(2) Theorem \ref{thm:geo_rigid}(1) is optimal in the sense that $l=n-k-1$ does not imply flatness. For example, let us consider $M=\mathbb{T}^{n-k-1}\times N^{k+1}$, where $N$ has the Schwarzschild metric. Then $\Gamma$ has orbit growth $n-k-1$ and $M$ has a unique asymptotic cone $\mathbb{R}^k$, but $M$ is not flat.\\
(3) Theorem \ref{thm:geo_rigid}(2) is optimal in the sense that $l=n-k-2$ does not imply flatness. For example, let us consider $M=\mathbb{R}^k \times \Sigma^2 \times \mathbb{T}^{n-k-2}$, where $\Sigma$ is a complete surface of positive sectional curvature and Euclidean volume growth.
\end{rems}



Lastly, we construct an example $M$ of $\Ric>0$ and $\pi_1(M)=\mathbb{Z}$ but its orbit growth order can be either $\alpha$ or $\beta$ depending on the scales. In particular, it does not have stable orbit growth.

\begin{mainthm}\label{thm:exmp}
   Given $1<\alpha<\beta$, there is an open $n$-manifold $M$ with $\Ric>0$ and $\pi_1(M)=\mathbb{Z}$, where $n$ is sufficiently large, such that the orbit growth of $\Gamma$ satisfies the following inequalities with constants $c_1,c_2>0$ depending only on $\alpha$ and $\beta$:\\
   (1) There is a sequence $r_i\to \infty$ such that $c_1 r_i^\alpha \le \#\Gamma(r_i)\le c_2 r_i^\alpha$.\\
   (2) There is a sequence $s_i\to \infty$ such that $c_1 s_i^\beta \le \#\Gamma(s_i)\le c_2 s_i^\beta$.
\end{mainthm}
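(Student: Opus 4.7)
The plan is to modify the Nabonnand-type doubly warped construction used in \cite{nabonnand,Pan21,PanWei}, choosing the warping function so that its decay rate oscillates between two prescribed power laws on alternating intervals of radii. Let $\widetilde M=\mathbb{R}^{n-1}\times\mathbb{R}$ be endowed with the doubly warped metric
$$g=dr^{2}+h(r)^{2}\,g_{S^{n-2}}+f(r)^{2}\,dt^{2},$$
where $r=|x|$ for $x\in\mathbb{R}^{n-1}$ and $t\in\mathbb{R}$; smoothness at $r=0$ follows from the standard conditions on $h,f$ and their even-order derivatives. Let $\Gamma=\mathbb{Z}$ act by $t\mapsto t+1$; the quotient $M=\widetilde M/\Gamma$ is smooth with $\pi_{1}(M)=\mathbb{Z}$. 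Fix the base point $\tilde p=(0,0)$, so that $k\cdot \tilde p=(0,k)$.

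The test path that goes radially out to radius $r$, along the $t$-direction, and then back, has length $2r+f(r)k$, and comparison arguments as in \cite{Pan21,PanWei} show that this essentially realizes the minimal distance. Minimizing over $r$ and using that $f$ is chosen strictly decreasing, one finds that whenever $f(r)\asymp C\,r^{-(\lambda-1)}$ throughout the relevant range, $d(\tilde p,k\cdot\tilde p)\asymp k^{1/\lambda}$, i.e.\ $\#\Gamma(R)\asymp R^{\lambda}$ at the corresponding scales. Now pick sequences $R_{1}<S_{1}<R_{2}<S_{2}<\cdots\uparrow\infty$ with wide geometric gaps and define $f$ to be continuous, monotonically decreasing, and piecewise equal to $a_{i}\,r^{-(\alpha-1)}$ on $[R_{i},S_{i}]$ and to $b_{i}\,r^{-(\beta-1)}$ on $[S_{i},R_{i+1}]$, with the constants $a_{i},b_{i}$ determined inductively by matching at the interfaces and with short transition windows inserted for smoothness. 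Selecting $r_{i}\in[R_{i},S_{i}]$ and $s_{i}\in[S_{i},R_{i+1}]$ well away from the transitions, the local power-law estimate above yields the two inequalities (1) and (2) with constants $c_{1},c_{2}$ depending only on $\alpha$ and $\beta$.

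It remains to choose $h$ so that $\Ric>0$ on $\widetilde M$, which descends to $M$. Taking $h$ concave and increasing (e.g.\ an appropriate bounded modification of $r^{\theta}$ for some $0<\theta<1$), the quantities $-h''/h$ and $-f'h'/(fh)$ are positive, while $f''/f$ is positive in each power-law regime since $f(r)\asymp r^{-\gamma}$ with $\gamma>0$ is convex. The doubly warped Ricci identities give in the radial direction $-f''/f-(n-2)h''/h>0$, in the $t$-direction $-f''/f-(n-2)f'h'/(fh)>0$, and in the $S^{n-2}$-direction a competition between $(n-3)/h^{2}$ and $h''/h,\,(h'/h)^{2},\,f'h'/(fh)$, exactly as in \cite{nabonnand,Pan21,PanWei}. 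On each pure power-law interval the positivity reduces to the setting of \cite{Pan21,PanWei}. The chief obstacle is to control $f''$ through the transition windows, where the logarithmic slope of $f$ jumps by a fixed amount between $-(\alpha-1)$ and $-(\beta-1)$: this forces the windows to be long on the logarithmic scale (so that $|f''|$ remains bounded by a controlled multiple of $f/r^{2}$) and the dimension $n$ to be taken large in terms of $\alpha,\beta$ (so that the $(n-2)$ factor multiplying $-h''/h$ absorbs $f''/f$). This is the reason for the clause ``$n$ sufficiently large'' in the statement of Theorem~\ref{thm:exmp}.
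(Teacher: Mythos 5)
Your outline is essentially the same as the paper's: a doubly warped product $[0,\infty)\times_f S^{k}\times_h S^1$ (you interchange the letters $f$ and $h$, but the roles are identical), with the circle-warping function forced to oscillate between two power-law decay rates, positivity of $\Ric$ secured by making the sphere dimension large, and the orbit growth read off from the optimal ``go out radially, wind around the circle, come back'' geodesics. The observation that $h\asymp r^{-(\lambda-1)}$ at the relevant radii yields $d(\tilde p,\gamma^{\ell}\tilde p)\asymp \ell^{1/\lambda}$ and hence $\#\Gamma(R)\asymp R^{\lambda}$ is correct. So the high-level strategy matches the paper.

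There is, however, a genuine gap in how you implement the oscillation. You propose a continuous monotone decreasing $f$ that equals $a_i r^{-(\alpha-1)}$ on $[R_i,S_i]$ and $b_i r^{-(\beta-1)}$ on $[S_i,R_{i+1}]$, with $a_i,b_i$ fixed by continuity at the interfaces. Since $\alpha<\beta$, continuity forces $a_{i+1}=a_i\,(S_i/R_{i+1})^{\beta-\alpha}$, which tends to zero as soon as the gaps $R_{i+1}/S_i$ grow (and they must grow for the power-law windows to be seen at arbitrarily large scales). The length scale on the circle is then $L(r)\asymp a_i r^{-(\alpha-1)}$ on the $i$-th window, so the minimization gives $d(\tilde p,\gamma^{\ell}\tilde p)\asymp (a_i\ell)^{1/\alpha}$ and hence $\#\Gamma(r_i)\asymp r_i^{\alpha}/a_i$. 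Because $a_i\to 0$, the implied constants blow up with $i$, and the conclusion that $c_1,c_2$ depend only on $\alpha$ and $\beta$ fails. Your sentence ``the local power-law estimate above yields the two inequalities (1) and (2) with constants depending only on $\alpha$ and $\beta$'' quietly discards the prefactor $C=a_i$ that you introduced in the same sentence.

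The paper avoids this by inserting two auxiliary bridge exponents $B>\beta$ and $A<\alpha$ and cycling $h$ through four pieces $(1+r^2)^{-\alpha}\to C_{i,+}(1+r^2)^{-B}\to(1+r^2)^{-\beta}\to C_{i,-}(1+r^2)^{-A}\to(1+r^2)^{-\alpha}$. The steep bridge $B>\beta$ carries $h$ down from $h_1$ to $h_2$, while the shallow bridge $A<\alpha$ decays slower than $h_1$ and so lets the piecewise function catch up to $h_1$ again while remaining strictly decreasing. The breakpoints $R_{i,2},R_{i,4}$ are solved from the matching equations so that $h$ lands exactly on $(1+r^2)^{-\alpha}$ and $(1+r^2)^{-\beta}$ (with no drifting prefactor) on an unbounded family of windows. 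That is what makes the length estimate of the paper's Lemma~\ref{lem:length_scale} hold with constants independent of $i$, and it is the missing ingredient in your argument.

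Two smaller points. First, the assertion that the smoothing windows must be ``long on the logarithmic scale'' is not needed: a fixed multiplicative window such as $[R,1.2R]$ suffices, since the cutoff derivatives are then $O(1/R)$ and $O(1/R^2)$, the same order as the derivatives of the power laws themselves, which is exactly what Observation~\ref{obs:Ric} requires. Second, the exponents in your parametrization ($\lambda-1$ for orbit growth $\lambda$) differ from the paper's ($2\alpha$ for orbit growth $1+2\alpha$, since $h=(1+r^2)^{-\alpha}$); this is only a change of variables, but be aware that it shifts which part of the Ricci computations must be redone.
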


Using a similar construction and considering its universal cover, we also give an example of an open manifold $N$ with $\Ric>0$ such that the Hausdorff dimension of its asymptotic cones can take any real value in an interval $[d_1,d_2]$; see Theorem \ref{thm:vary_Hdim}. Both this and Theorem \ref{thm:exmp} are the first examples with the described properties.

The construction for Theorem \ref{thm:exmp} is developed on Nabonnand's example \cite{nabonnand}, which is a doubly warped product $[0,\infty) \times_f S^k \times_h S^1$ with a decreasing function $h$; also see \cite{Bergery86,Wei88,PanWei} for related constructions. Then the growth rate of $\#\Gamma(R)$ is related to the decay rate of $h$. To achieve a varying growth rate of $\#\Gamma(R)$, we would require $h$ to have a varying decay rate. This involves a careful gluing to ensure the positive Ricci curvature. 

The idea of proving both Theorems \ref{thm:main_b1} and \ref{thm:geo_rigid} is to consider the equivariant asymptotic cone of a suitable covering space of $M$:
$$\begin{CD}(r_i^{-1} \overline{M},\bar{p},\Lambda) @>GH>> (Y,y,G) \\
	@VV\pi V @VV \pi V\\
	(r_i^{-1} M,p) @>GH>> (Z,z),
\end{CD}$$
where $\Lambda$ is the covering group acting isometrically on $\overline{M}$. By the regularity theory of Ricci limit spaces developed by Cheeger-Colding \cite{colding,CC97,CCI}, $\dimH(Y)$, the Hausdorff dimension of $Y$, is at most $n$, and $\dimH(Y)>n-1$ if and only if $\overline{M}$ has Euclidean volume growth. Moreover, if $M$ is non-flat and $\overline{M}$ has Euclidean volume growth, then $Y$ is a metric cone of dimension $n$ with a Euclidean factor of dimension at most $n-2$. Given the assumption that $Z=Y/G$ already contains a Euclidean factor, one expects that the \textit{dimension} of $G$ has an upper bound in some sense. To show the rigidity in Theorems \ref{thm:main_b1} and \ref{thm:geo_rigid}, we prove technical results relating the first Betti number or the stable orbit growth order to certain dimension of the orbit $Gy$. More precisely, the first Betti number $b_1(M)$ gives a lower bound of the topological dimension of the orbit $Gy$ (Corollary \ref{cor:asy_topol_dim}), and the stable orbit growth order equals the Hausdorff dimension of $Gy$ (Proposition \ref{prop:stable_dimH}). When the orbit $Gy$ has a large dimension, $Y$ will not only be a noncollapsing limit and thus a metric cone, but also contain sufficiently many lines such that $\overline{M}$ must be isometric to $\mathbb{R}^n$.

\textit{Acknowledgement.} J. Pan is partially supported by National Science Foundation DMS-2304698 and Simons Foundation Travel Support for Mathematicians. Z. Ye is partially supported by National Natural Science Foundation of China [11821101] and [12271372]. The authors would like to thank Xiaochun Rong for helpful discussions during the preparation of this paper.
 
\tableofcontents

\section{Preliminaries}
In this section, we recall some standard results on Ricci limit spaces and equivariant Gromov-Hausdorff convergence that will be used later. We also recall some results about flat manifolds. 

\subsection{Ricci limit spaces and asymptotic cones} 

Let $(M_i,p_i)$ be a sequence of $n$-manifolds with uniform Ricci lower bound with $p_i\in M_i$. After passing to a subsequence, $(M_i,p_i)$ Gromov-Hausdorff converge to a limit space $(Y,y)$, which we will refer to as a \textit{Ricci limit space}. After passing to a subsequence again, one can obtain a renormalized limit measure $\nu$ on $Y$, which is a Radon measure. The regularity theory of Ricci limit spaces was developed through the seminal works mainly by Cheeger, Colding, and Naber. Below we review the results that will be used later in the paper.

We write $\mathcal{M}(n,-\kappa)$ as the set of all pointed Ricci limit spaces coming from a sequence of complete $n$-manifolds $(M_i,p_i)$ with $\mathrm{Ric}\ge -(n-1)\kappa$. Let $X\in \mathcal{M}(n,-\kappa)$. We say that a point $x\in X$ is \textit{$k$-regular} for some $k\in\mathbb{N}$, if $X$ has a unique tangent cone $\mathbb{R}^k$ at $x$, that is
$$(r_iX,x)\overset{GH}\longrightarrow (\mathbb{R}^k,0)$$
for all $r_i\to\infty$. We denote by $\mathcal{R}_k$ the set of all $k$-regular points in $X$.

\begin{thm}\cite{CC97}\label{pre:split}
    Let $X\in \mathcal{M}(n,0)$. If $X$ contains a line, then $X$ splits isometrically as $\mathbb{R}\times X'$.
\end{thm}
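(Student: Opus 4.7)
The plan is to adapt Cheeger--Gromoll's classical splitting argument for smooth manifolds with $\Ric\ge 0$ to the limit setting using the quantitative (``almost'') tools developed by Cheeger--Colding. Since $X\in\mathcal{M}(n,0)$, we may fix a sequence of $n$-manifolds $(M_i,p_i)$ with $\Ric_{M_i}\ge -\epsilon_i$ and $\epsilon_i\to 0$, Gromov--Hausdorff converging to $(X,x)$ where $x$ is a point on the given line $\gamma:\mathbb{R}\to X$. The goal is to produce a $1$-Lipschitz function $b:X\to\mathbb{R}$ whose level sets are mutually isometric and such that the map $X\to\mathbb{R}\times b^{-1}(0)$ is an isometry; the splitting $X=\mathbb{R}\times X'$ then follows with $X'=b^{-1}(0)$.

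First I would transport the line to the approximating manifolds: pick $q_i^\pm\in M_i$ that are $\epsilon_i$-approximations of $\gamma(\pm T_i)$ for some $T_i\to\infty$, and let $\sigma_i:[-T_i,T_i]\to M_i$ be a minimizing geodesic between them. The associated ``finite'' Busemann functions $b_{i,T}^\pm(y) := T - d(y,q_i^\pm)$ satisfy $(b_{i,T}^++b_{i,T}^-)(p_i)\to 0$ by the line property. The Abresch--Gromoll excess inequality for $\Ric\ge -\epsilon_i$ upgrades this to a uniform smallness of the excess $b_{i,T}^++b_{i,T}^-$ in a neighborhood of $p_i$, and the Laplacian comparison theorem gives $\Delta b_{i,T}^\pm\le (n-1)\sqrt{\epsilon_i}\coth(\sqrt{\epsilon_i}\cdot)$, hence $\Delta b_{i,T}^\pm\to 0$ as a distribution in the limit. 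Solving the Dirichlet problem for a harmonic function $h_i$ approximating $b_{i,T_i}^+$ on a ball around $p_i$, the maximum principle plus the above gives $h_i\to b^+$ uniformly on compact sets, and the limit $b=b^+$ is the desired Busemann function, which is $1$-Lipschitz with $|b^+(x)|=\text{(distance to the line)}$ behaving correctly.

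The central step is the quantitative Bochner / integral Hessian estimate: on each $M_i$, the Cheng--Yau gradient estimate gives $|\nabla h_i|\le 1+o(1)$, and integrating the Bochner formula
\[
\tfrac{1}{2}\Delta|\nabla h_i|^2 = |\Hess h_i|^2 + \langle\nabla h_i,\nabla\Delta h_i\rangle + \Ric(\nabla h_i,\nabla h_i)
\]
against a cutoff and using $\Ric\ge -\epsilon_i$, $|\nabla h_i|\le 1+o(1)$, and $\Delta h_i=0$ yields
\[
\fint_{B_R(p_i)}|\Hess h_i|^2\,d\mathrm{vol}\;\longrightarrow\;0.
\]
Combined with the Cheeger--Colding segment / Poincar\'e inequalities for manifolds with $\Ric\ge -\epsilon_i$, this forces $h_i$ to be ``almost affine'' along geodesics: pairs of points $(y,z)$ satisfy $|h_i(y)-h_i(z)|$ approximately equal to the length of the projection of $yz$ onto $\nabla h_i$. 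Passing to the limit, $b:X\to\mathbb{R}$ is $1$-Lipschitz, satisfies the eikonal identity in the appropriate integral sense, and its integral curves are limits of gradient flows; the Pythagorean relation $d(y,z)^2=|b(y)-b(z)|^2+d_{X'}(\pi(y),\pi(z))^2$ then identifies $X$ isometrically with $\mathbb{R}\times b^{-1}(0)$.

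The main obstacle, and what makes the theorem substantially deeper than its smooth ancestor, is precisely the passage from pointwise Bochner identities on $M_i$ to the rigid Pythagorean equality on $X$: one must control the integral of $|\Hess h_i|^2$ with the correct cutoffs, establish the almost-rigidity Pythagorean theorem (itself a major result in \cite{CC97} requiring the segment inequality and the Abresch--Gromoll inequality in the $\Ric\ge -\epsilon$ setting), and then promote $L^2$-smallness of $|\Hess h_i|^2$ to uniform control of the splitting map on compact sets. The construction of $X'$ as a metric space, together with the verification that the splitting is global (not just local around $x$), also requires extending $b$ globally using that $\gamma$ is a line and that Busemann functions of rays from limit points behave coherently under Gromov--Hausdorff convergence.
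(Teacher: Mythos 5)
This statement is a cited preliminary from Cheeger--Colding \cite{CC97}; the paper reproduces it without proof, so there is no in-paper argument to compare yours against. Judged on its own terms, your sketch is a recognizable outline of the Cheeger--Colding splitting proof for Ricci limits: transport the line to the approximating manifolds, control the excess via Abresch--Gromoll, replace the finite Busemann functions by harmonic approximants, obtain $L^2$-smallness of the Hessian from a cutoff-integrated Bochner identity, feed that through the segment/Poincar\'e inequalities, and close with the quantitative Pythagorean almost-rigidity to identify $X$ with $\mathbb{R}\times b^{-1}(0)$. Note also that with the paper's definition of $\mathcal{M}(n,0)$ every $M_i$ already has $\Ric\ge 0$, so the $\epsilon_i$-sequence is not needed here, though your more general framing is exactly the almost-splitting form in which \cite{CC97} actually proves it.

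Two imprecisions in the middle block are worth flagging, though neither damages the overall architecture. First, the Cheng--Yau gradient estimate applies to positive harmonic functions and is not what bounds $|\nabla h_i|$; what one actually proves is that $\nabla h_i$ is $L^2$-close to $\nabla b_{i,T}^{\pm}$ (via the Dirichlet comparison), so that $\fint \bigl| |\nabla h_i|^2 - 1 \bigr|$ is small, and it is this $L^1$-smallness, not a pointwise gradient bound, that makes the boundary terms in the cutoff-integrated Bochner formula go away. Second, the step ``maximum principle plus the above gives $h_i\to b^+$'' is compressed: $\Delta b_{i,T}^{+}$ satisfies only an upper bound in the barrier/distributional sense, so the comparison between $h_i$ and $b_{i,T}^{+}$ is a one-sided argument (an $L^1$ or $L^\infty$ estimate using the mean value/heat kernel bounds) rather than a straightforward two-sided maximum principle. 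Both points are exactly where \cite{CC97} spends its technical effort, so a referee would expect them spelled out; as a high-level sketch of the route the theorem actually takes, your proposal is sound.
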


\begin{thm}\cite{coldingnaber}\label{pre:isom_Lie}
   Let $X\in \mathcal{M}(n,-1)$. Then its isometry group $\mathrm{Isom}(X)$ is a Lie group.
\end{thm}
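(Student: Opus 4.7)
The plan is to establish that $\mathrm{Isom}(X)$ is a locally compact topological group with no small subgroups, and then invoke the Gleason--Montgomery--Zippin solution to Hilbert's fifth problem to conclude that it is a Lie group.

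First, I would equip $\mathrm{Isom}(X)$ with the compact-open topology and prove local compactness. Since $X\in\mathcal{M}(n,-1)$ is the pointed Gromov--Hausdorff limit of $n$-manifolds satisfying Bishop--Gromov comparison, the comparison passes to the renormalized limit measure and $X$ is proper (closed balls are compact). A standard Arzelà--Ascoli argument then shows that any sequence $g_i\in\mathrm{Isom}(X)$ with $d(y_0,g_i y_0)$ bounded subconverges to an isometry, giving local compactness.

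The decisive step is the no-small-subgroups (NSS) property: one must produce an open neighborhood $U$ of the identity containing no nontrivial subgroup. Suppose for contradiction there exist nontrivial subgroups $H_i\le\mathrm{Isom}(X)$ all contained in shrinking neighborhoods of the identity. Choose a regular point $y\in\mathcal{R}_k$ (nonempty, in fact of full renormalized measure, by the Cheeger--Colding regularity theory). Rescaling at $y$ by factors $r_i\to\infty$ chosen so that the rescaled $H_i$-action remains nontrivial, the limit action would produce a nontrivial closed subgroup of $\mathrm{Isom}(\mathbb{R}^k)$ contained in an arbitrarily small neighborhood of the identity, contradicting the fact that $\mathrm{Isom}(\mathbb{R}^k)$ is a Lie group. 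The technical input that legitimizes this rescaling in the merely metric Ricci-limit setting is the Colding--Naber theorem on sharp Hölder continuity of tangent cones along the interior of a minimizing geodesic: it ensures the Euclidean regularity at one regular point propagates in a controlled way to nearby points, ruling out the degeneration a small-subgroup action would create.

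With local compactness and NSS in place, Gleason--Montgomery--Zippin yields that $\mathrm{Isom}(X)$ is a Lie group. The main obstacle is the NSS property, since it is the only step that genuinely uses the Ricci lower bound through the quantitative regularity of tangent cones; local compactness is routine metric-space theory, and the final invocation of Hilbert's fifth problem is a black box.
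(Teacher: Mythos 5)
The paper does not prove this statement---it is cited to Colding--Naber---so there is no in-paper proof to compare against. Assessed on its own terms, your high-level plan (local compactness via Arzel\`a--Ascoli, no-small-subgroups, then Gleason--Montgomery--Zippin) is indeed the route taken in the literature for Alexandrov spaces (Fukaya--Yamaguchi) and extended to Ricci limits by Colding--Naber, and you correctly isolate the H\"older continuity of tangent cones along geodesics as the essential new input in the Ricci setting.

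However, the NSS step has a genuine gap. After rescaling at a regular point $y$ so that the $H_i$-action ``remains nontrivial''---say, so that the maximal displacement of $y$ equals a fixed $\delta>0$---the limit is a nontrivial closed subgroup $H_\delta\le\mathrm{Isom}(\mathbb{R}^k)$ whose orbit of the origin has diameter $\le 2\delta$. It does \emph{not} follow that $H_\delta$ lies in a small neighborhood of the identity in $\mathrm{Isom}(\mathbb{R}^k)$: smallness in the compact-open topology requires small displacement on balls of arbitrarily large radius, whereas the Lipschitz bound $d(x,gx)\le d(y,gy)+2d(x,y)$ permits displacement growing linearly in $d(x,y)$ (a rotation through a small angle about a point near $y$ realizes exactly this). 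So the contradiction you claim---a small nontrivial subgroup of $\mathrm{Isom}(\mathbb{R}^k)$---is not produced. The genuine argument must instead exploit that $H_\delta$ has a bounded orbit, hence a fixed point, hence is conjugate into $O(k)$, and then rule it out by a more delicate analysis. You also do not address the case where $H_i$ fixes $y$ (rescaling at $y$ then sees nothing), nor how to locate, for each $i$, a point simultaneously moved by $H_i$ and nearly regular at the scale of the displacement; this last point is exactly where the Colding--Naber H\"older continuity is indispensable, and your sketch invokes it only in passing without doing this work.
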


\begin{thm}\cite{coldingnaber}\label{pre:rect_dim}
   Let $(X,x)\in\mathcal{M}(n,-1)$ be a Ricci limit space with a renormalized limit measure $\nu$. Then there is a unique integer $k\in [0,n]$ such that $\nu(X\backslash \mathcal{R}_k)=0$. In particular, $\mathcal{R}_k$ is dense in $X$.
\end{thm}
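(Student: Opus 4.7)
The plan is to follow the Colding--Naber strategy, whose key novelty over the earlier Cheeger--Colding regularity theory is a quantitative H\"older-in-time control on the geometry of small metric balls along interior geodesics. Before that, I recall what comes for free: by the almost-splitting theorem and the volume convergence/almost rigidity results of Cheeger--Colding \cite{CC97,CCI}, for $\nu$-a.e.\ $y\in X$ every tangent cone at $y$ splits off a Euclidean factor, and one can define
\[
k(y):=\max\{j \ : \ \text{some tangent cone at $y$ splits off $\mathbb{R}^{j}$}\}\in\{0,1,\dots,n\}.
\]
The function $k$ is Borel measurable in $y$, so the theorem reduces to showing $k$ is $\nu$-a.e.\ constant, and that at $\nu$-a.e.\ point the tangent cone is in fact uniquely $\mathbb{R}^{k(y)}$.

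The core analytic step I would carry out is the \emph{geodesic H\"older continuity of small balls}. Fix a unit-speed minimizing geodesic $\gamma:[0,L]\to X$ whose interior avoids a definite neighborhood of the ``bad" set (this is where a segment inequality / maximal-function argument is needed at the manifold level, inherited by the limit). The claim is that for $r$ sufficiently small and $s,t\in[\varepsilon, L-\varepsilon]$,
\[
d_{GH}\!\bigl(B_r(\gamma(s)),\, B_r(\gamma(t))\bigr)\;\le\; C\, r\,|s-t|^{\alpha}
\]
for some $\alpha=\alpha(n)\in(0,1)$ and $C=C(n,\varepsilon,L)$. At the manifold level this reduces to parabolic-type estimates for the heat flow of the distance function, combined with Cheeger--Colding's segment inequality to turn integral Ricci bounds into pointwise geometric information along $\gamma$; on the limit this transfers via a compactness/diagonal argument.

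Once this H\"older estimate is available, I would conclude as follows. If $y$ is a point where some tangent cone is $\mathbb{R}^{k(y)}\times C(Z)$ with $Z\neq\mathrm{pt}$, rescale by $r_i\to\infty$. The H\"older estimate, applied on the rescaled spaces along a short geodesic emanating from $y$, propagates the Euclidean factor to nearby scales: any two scales that are H\"older-close must have tangent cones with the same Euclidean splitting dimension, since the Euclidean dimension is upper semicontinuous under GH-convergence and cannot jump down under a small perturbation once a cone with this splitting is fixed. This forces all tangent cones at $y$ to share the maximal Euclidean factor $\mathbb{R}^{k(y)}$, and together with a volume-comparison argument rules out a nontrivial cone factor $C(Z)$ at $\nu$-typical $y$; hence the tangent cone at $y$ is uniquely $\mathbb{R}^{k(y)}$, giving $y\in\mathcal{R}_{k(y)}$ for $\nu$-a.e.\ $y$. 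Finally, joining $\nu$-typical pairs of regular points by minimizing geodesics and applying the H\"older continuity along each, $k(\cdot)$ is constant along such geodesics; a Fubini/segment-inequality argument then shows $k$ is $\nu$-a.e.\ equal to a single integer $k\in[0,n]$, and density of $\mathcal{R}_k$ in the support of $\nu$ (hence in $X$) follows.

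The main obstacle is unquestionably the H\"older estimate in the second paragraph: all the earlier Cheeger--Colding machinery yields only qualitative, scale-by-scale splitting statements, and one cannot get constancy of $k$ from them alone because a priori tangent cones can oscillate wildly between scales. Establishing the quantitative propagation requires rather delicate heat-kernel / parabolic estimates on the approximating manifolds and a careful passage to the limit; everything else in the proof is, by comparison, a formal consequence of this one quantitative input together with the previously recalled theorems.
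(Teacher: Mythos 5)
This theorem is quoted in the paper with a citation to Colding--Naber and is not proved there, so there is no in-paper proof to compare against; I evaluate your sketch on its own.

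Your identification of the key analytic input --- the H\"older-in-time continuity of the GH geometry of small balls along interior minimizing geodesics, from Colding--Naber --- is correct, and your closing paragraph (join $\nu$-typical regular pairs by geodesics, use the H\"older estimate to show $k(\cdot)$ is locally constant hence constant along the interior, then a segment-inequality/Fubini argument for $\nu$-a.e.\ constancy) is the right shape of the Colding--Naber argument. However, the middle paragraph contains a genuine gap. The H\"older estimate compares $B_r(\gamma(s))$ with $B_r(\gamma(t))$: \emph{same scale} $r$, different base points along a geodesic. It does not control how the geometry of balls at a fixed point $y$ varies \emph{across scales} $r_1\neq r_2$, so the claim that two ``H\"older-close scales'' at $y$ must have tangent cones with the same Euclidean splitting is not a consequence of this estimate. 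In fact, tangent cones at individual points of Ricci limit spaces are genuinely non-unique in general, and the statement you need here --- that $\nu$-a.e.\ point has a \emph{unique} tangent cone isometric to some $\mathbb{R}^k$, i.e.\ $\nu(X\setminus\bigsqcup_k\mathcal{R}_k)=0$ --- is a prior Cheeger--Colding theorem, obtained from volume monotonicity and almost-rigidity, not from the H\"older-in-time estimate. It should be invoked as a black box; the Colding--Naber contribution is precisely the remaining step, that the integer $k$ is the same for $\nu$-a.e.\ point, which is what your final paragraph (correctly) proves. Finally, the ``in particular'' clause on density uses that the renormalized limit measure is a Radon measure of full support, so that every metric ball has positive $\nu$-measure; this is worth stating since it is the only way $\nu$-a.e.\ implies dense.
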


The integer $k$ in Theorem \ref{pre:rect_dim} is called the \textit{rectifiable dimension} of $X$.

\begin{thm}\cite{CCI}\label{pre:Hdim_gap}
   Let $(X,x)\in\mathcal{M}(n,-1)$ be a Ricci limit space. Then $X$ has Hausdorff dimension either $=n$ or $\le n-1$. Moreover, the Hausdorff dimension equals $n$ if and only if the sequence $(M^n_i,p_i)\overset{GH}\longrightarrow(X,x)$ is non-collapsing, that is, $\mathrm{vol}(B_1(p_i))\ge v>0$.
\end{thm}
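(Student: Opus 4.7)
The plan is to combine Colding's volume convergence theorem with the rectifiable dimension structure from Theorem \ref{pre:rect_dim}, treating the equivalence and the gap in one sweep.

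For the non-collapsing direction, suppose $\mathrm{vol}(B_1(p_i)) \ge v > 0$. Bishop--Gromov comparison gives uniform two-sided volume bounds $c_1(r) \le \mathrm{vol}(B_r(p_i)) \le c_2(r)$ at every scale, so by Colding's volume convergence the renormalized limit measure $\nu$ agrees (up to a constant) with $\mathcal{H}^n$ on $X$ and satisfies $\mathcal{H}^n(B_r(y)) \ge c r^n$ at all sufficiently small scales. This $n$-Ahlfors lower regularity forces $\dimH(X) \ge n$, while the doubling of $\nu$ inherited from Bishop--Gromov caps $\dimH(X) \le n$. Hence $\dimH(X) = n$.

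For the collapsing direction and the gap, suppose $\mathrm{vol}(B_1(p_i)) \to 0$. The rectifiable dimension $k$ from Theorem \ref{pre:rect_dim} must then be strictly less than $n$: a $k = n$ regular point would produce a Euclidean tangent cone $\mathbb{R}^n$, and running the non-collapsing half along rescalings at that point would propagate a definite volume lower bound back to $p_i$, contradicting the collapsing assumption. Since $k$ is an integer, $k \le n-1$. The next step is to identify $\dimH(X) = k$. The lower bound $\dimH(X) \ge k$ follows from the bi-H\"older $\epsilon$-regularity of Cheeger--Colding near $k$-regular points, which gives a bi-H\"older embedding of a neighborhood into $\mathbb{R}^k$. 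For the upper bound, I would cover $X$, outside a small exceptional set, by countably many almost-regular pieces carrying an upper $k$-Ahlfors density bound; the almost-splitting theorem and efficient covers at tangent scales then yield $\mathcal{H}^{k+\eta}(X) = 0$ for every $\eta > 0$, so $\dimH(X) \le k \le n-1$.

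The main obstacle is this upper bound $\dimH(X) \le k$ in the collapsed regime, which relies on the full Cheeger--Colding regularity machinery: almost-splitting, the measure-theoretic identification of tangent cones, and a delicate control of covering numbers at the scale of tangent approximations. Since the statement lives in the preliminaries, the actual write-up just cites \cite{CCI}, but conceptually this covering-number estimate is the deepest input of the theorem; the non-collapsing implication in the opposite direction, by contrast, is essentially a direct consequence of Bishop--Gromov and volume convergence.
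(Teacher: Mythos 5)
The paper gives no proof of this theorem; it is imported directly from Cheeger--Colding \cite{CCI} as a preliminary, so there is no in-paper argument to compare against. Your proposal, however, contains a genuine error in the collapsed case that makes the argument unsalvageable as written.

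You claim $\dimH(X) = k$, where $k$ is the rectifiable dimension from Theorem~\ref{pre:rect_dim}, and in particular that the upper bound $\dimH(X) \le k$ can be obtained by covering $X$ outside a small exceptional set by almost-regular pieces. This is false in general for collapsed Ricci limit spaces. The paper itself points this out in the remark immediately following Theorem~\ref{pre:Hdim_gap}: the Hausdorff dimension of a Ricci limit space can be non-integer and strictly larger than its rectifiable dimension, with the Pan--Wei example \cite{PanWei} as witness. Concretely, the asymptotic cones $Y$ constructed there have rectifiable dimension $2$ (the complement of the limit orbit $Gy$ consists of $2$-regular points) yet Hausdorff dimension $1+2\alpha$, which exceeds $2$ for any $\alpha > 1/2$, while the sequence is collapsed since $1+2\alpha < n$. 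The failure mode is exactly the step you flagged as ``the main obstacle'': the exceptional set of non-$k$-regular points has $\nu$-measure zero, but $\nu$-nullity says nothing about its Hausdorff dimension, which can strictly exceed $k$. So one cannot derive $\mathcal{H}^{k+\eta}(X) = 0$ from a cover of the regular part alone, and the inequality $\dimH(X) \le k$ is simply not a theorem. The actual dimension-gap proof in \cite{CCI} does not route through rectifiable dimension at all; it establishes directly that $\dimH(X) > n-1$ forces a definite $n$-dimensional volume lower bound along the sequence, hence non-collapsing. Your treatment of the non-collapsing direction via Colding's volume convergence and Bishop--Gromov doubling is fine, and the observation that $k < n$ in the collapsed case is also correct --- it is only the leap from ``rectifiable dimension $\le n-1$'' to ``Hausdorff dimension $\le n-1$'' that is a gap, and it is a gap that the very example this paper builds on was designed to expose.
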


We remake that in general, the Hausdorff dimension of a Ricci limit space could be a non-integer and strictly larger than its rectifiable dimension; see the example by Pan-Wei \cite{PanWei}.

For an open (complete and non-compact) $n$-manifold $M$ with nonnegative Ricci curvature and a sequence  $r_i\to\infty$, the corresponding blow-down sequence naturally gives a Ricci limit space after passing to a subsequence
$$(r_i^{-1}M,p)\overset{GH}\longrightarrow (Y,y).$$
We call $(Y,y)$ an \textit{asymptotic cone}, or a \textit{tangent cone at infinity}, of $M$. In general, $(Y,y)$ is not unique and may not be a metric cone. In the case of Euclidean volume growth, more structure of these asymptotic cones can be said.

\begin{thm}\cite{CC97} \label{pre:vol_cone}
  Let $M$ be an open $n$-manifold of $\mathrm{Ric}\ge 0$ and Euclidean volume growth. Then any asymptotic cone $(Y,y)$ of $M$ is a metric cone $C(Z)$ with Hausdorff dimension $n$ and vertex $y$. 
\end{thm}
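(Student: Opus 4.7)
The plan is to combine two classical ingredients under $\mathrm{Ric}\ge 0$: Bishop-Gromov volume comparison and Cheeger-Colding's almost volume cone implies almost metric cone theorem.

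First, I would set $V(R)=\mathrm{vol}(B_R(p))/(\omega_n R^n)$, where $\omega_n$ is the volume of the unit ball in $\mathbb{R}^n$. Bishop-Gromov monotonicity under $\mathrm{Ric}\ge 0$ tells us that $V(R)$ is non-increasing in $R$, while Euclidean volume growth says $V(R)\ge v_0>0$ for all $R\ge 1$. Hence $V(R)\to V_\infty>0$ as $R\to\infty$, and in particular for every fixed $\lambda>1$,
$$\frac{V(\lambda R)}{V(R)}\longrightarrow 1\quad\text{as }R\to\infty.$$
This is the precise sense in which, on sufficiently large scales, all annuli centered at $p$ are almost volume cones.

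Second, I would invoke the Cheeger-Colding almost-volume-cone implies almost-metric-cone theorem: under $\mathrm{Ric}\ge -(n-1)\kappa$, if the renormalized volume ratios at scales $r$ and $\lambda r$ are sufficiently close, then the metric annulus $B_{\lambda r}(p)\setminus B_r(p)$ is Gromov-Hausdorff close to a corresponding annulus in some metric cone with vertex (the scale of closeness of the metric annulus controlled by the scale of closeness of the volumes, plus the curvature scale $\kappa r^2$). Applying this along the rescaled sequence $(r_i^{-1}M,p)$ with $r_i\to\infty$, the curvature scale vanishes and, by the first step, the volume ratios equalize. Passing to the Gromov-Hausdorff limit, every annulus $B_b(y)\setminus B_a(y)$ in $(Y,y)$ with $0<a<b<\infty$ is isometric to the corresponding annulus of a metric cone with vertex $y$. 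Letting $a\to 0$ and $b\to\infty$ and using that these cone structures are compatible across scales, we conclude that $(Y,y)$ itself is a metric cone $C(Z)$ with vertex at $y$.

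Third, for the Hausdorff dimension, note that in the rescaled space $r_i^{-1}M$ we have
$$\mathrm{vol}_{r_i^{-1}M}(B_1(p))=\frac{\mathrm{vol}(B_{r_i}(p))}{r_i^n}\ge v_0\omega_n>0,$$
so the blow-down sequence is non-collapsing. Theorem \ref{pre:Hdim_gap} then immediately gives $\dimH(Y)=n$.

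The only real technical content lies in the second step; since the Cheeger-Colding cone rigidity is a standard cited tool, the main obstacle is merely bookkeeping the scales so that the limit annulus structures at different $a<b$ patch to a single cone $C(Z)$ with a common vertex $y$, which is automatic once one observes that the vertex of each almost-cone approximation is forced, up to errors tending to zero, to be $y$ itself (the basepoint).
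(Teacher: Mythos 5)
This statement is quoted in the paper from Cheeger--Colding without proof, so there is no in-paper argument to compare against; your outline is a correct reconstruction of the standard proof from that source. You correctly identify the two essential ingredients — Bishop--Gromov monotonicity forcing the volume ratio $V(R)$ to converge to a positive limit $V_\infty$ (so $V(\lambda R)/V(R)\to 1$), and Cheeger--Colding's ``almost volume cone implies almost metric cone'' applied to the rescaled sequence — and the non-collapsing observation together with Theorem~\ref{pre:Hdim_gap} correctly handles $\dimH(Y)=n$; the only cosmetic point is that since $\mathrm{Ric}\ge 0$ already, the curvature scale $\kappa$ is identically zero rather than merely vanishing in the rescaling limit.
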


Since a metric cone $(Y,y)$ in the above theorem satisfies the Cheeger-Colding splitting theorem (Theorem \ref{pre:split}), we can write $(Y,y)=(\mathbb{R}^k\times C(X), (0,x))$, where $\text{diam}(X)<\pi$ and $x$ is the unique vertex of $C(X)$. From volume convergence \cite{colding} and codimension $2$ of the singular sets \cite{CCI}, if the Euclidean factor of the above $Y$ has dimension $\ge n-1$, then $M$ must be isometric to $\mathbb{R}^n$.

\begin{thm}\cite{colding,CCI}\label{pre:flat}
  Let $M$ be an open $n$-manifold of $\mathrm{Ric}\ge 0$ and Euclidean volume growth. Suppose that $M$ has an asymptotic cone $Y$ that  splits off an $\mathbb{R}^{n-1}$ isometrically, then $Y$ and $M$ are isometric to $\mathbb{R}^n$.
\end{thm}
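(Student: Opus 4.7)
The plan is to first show that $Y$ itself is isometric to $\mathbb{R}^n$ by exploiting its metric cone structure, and then upgrade this to $M$ via volume rigidity. By Theorem \ref{pre:vol_cone}, $Y$ is a metric cone $C(Z)$ of Hausdorff dimension $n$ with vertex $y$. Writing the hypothesized splitting as $Y=\mathbb{R}^{n-1}\times W$, additivity of Hausdorff dimension forces $\dimH(W)=1$.

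I would then pin down $W$ by comparing tangent cones. At any product point $(v,w)\in Y$, the tangent cone decomposes as $\mathbb{R}^{n-1}\times T_w W$ because the first factor is already Euclidean. At the cone vertex $y=(v_0,w_0)$ the tangent cone of $Y$ is $Y$ itself, so $\mathbb{R}^{n-1}\times W\cong \mathbb{R}^{n-1}\times T_{w_0}W$, forcing $W\cong T_{w_0}W$. Thus $W$ is a $1$-dimensional metric cone with vertex $w_0$. The only such space in which $w_0$ is a smooth point is $W=\mathbb{R}$; in every other case (the ray $[0,\infty)$, or a cone with several rays glued at $w_0$), the vertex $w_0$ is a singular point.

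To exclude these non-smooth possibilities, I would invoke the codimension-two bound on the singular set of a noncollapsed Ricci limit space (see \cite{CCI}). If $w_0$ were singular in $W$, then every point of $\mathbb{R}^{n-1}\times\{w_0\}$ would be singular in $Y$ (its tangent cone would not be Euclidean), producing a codimension-one singular stratum; contradiction. Hence $W=\mathbb{R}$ and $Y=\mathbb{R}^n$.

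Finally, to deduce $M\cong\mathbb{R}^n$, I would apply Colding's volume convergence \cite{colding} to the noncollapsing blow-down $(r_i^{-1}M,p)\to(\mathbb{R}^n,0)$, which yields
\[
\lim_{R\to\infty}\frac{\mathrm{vol}(B_R(p))}{\omega_n R^n}=1,
\]
where $\omega_n$ is the volume of the Euclidean unit ball. By Bishop-Gromov monotonicity this ratio is non-increasing in $R$ and bounded by $1$, so it must be identically $1$, and the equality case of Bishop-Gromov forces $M$ isometric to $\mathbb{R}^n$. The principal obstacle is the codimension-two exclusion of the half-space (and any branching cone) as the factor $W$; once that deep input from \cite{CCI} is in hand, the remaining steps reduce to standard cone manipulations and volume rigidity.
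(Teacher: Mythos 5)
Your proof is correct and follows exactly the route the paper indicates for this cited theorem: the paper does not give a detailed argument but, just before the statement, points to the two ingredients you use --- volume convergence from \cite{colding} and the codimension-two bound on the singular set from \cite{CCI}.

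One step in your argument is stated a bit loosely. You derive $\mathbb{R}^{n-1}\times W\cong\mathbb{R}^{n-1}\times T_{w_0}W$ and then cancel the $\mathbb{R}^{n-1}$ factor to get $W\cong T_{w_0}W$. Cancellation of metric factors is not automatic in general, so it is worth saying a word: here it works because both sides are metric cones with cross-section of diameter at most $\pi$, and the maximal Euclidean factor in such a cone is unique, so the complementary factors must agree. A slightly more direct route avoids the tangent-cone detour entirely: once $Y=C(Z)$ splits off a line, $Z$ is a spherical suspension, and iterating gives $Z=S^{n-2}*V$ and $Y=\mathbb{R}^{n-1}\times C(V)$ with $V$ compact of diameter $\le\pi$; then $W=C(V)$ is a cone by construction and $\dimH(W)=1$ forces $V$ zero-dimensional. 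After that, your exclusion of a singular vertex of $W$ via the codimension-two bound (which rules out the ray, a bent line $V=\{p,q\}$ with $d(p,q)<\pi$, or any branching cone), and your final passage to $M\cong\mathbb{R}^n$ via volume convergence plus the Bishop--Gromov equality case, are both correct and are precisely what the paper is alluding to.
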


For a metric cone $Y=\mathbb{R}^k \times C(X)$ with $\text{diam}(X)<\pi$, because $C(X)$ does not contain any line, the isometry group of $Y$ also splits:

\begin{prop}\label{pre:isom_split}
    Let $Y=\mathbb{R}^k \times C(X)$ be a metric cone with vertex $y=(0,x)$, where $\text{diam}(X)<\pi$. Then the isometry group of $Y$ splits 
    $$\mathrm{Isom}(Y)\cong \mathrm{Isom}(\mathbb{R}^k) \times \mathrm{Isom}(X).$$
    In particular, for any subgroup $H$ of $\mathrm{Isom}(Y)$, the orbit $Hy$ is contained in $\mathbb{R}^k\times \{x\}.$
\end{prop}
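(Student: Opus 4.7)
The plan is to split the argument into two claims: (a) the vertex $x$ of $C(X)$ is canonically characterized by the metric, giving $\mathrm{Isom}(C(X)) \cong \mathrm{Isom}(X)$; and (b) every isometry of $Y = \mathbb{R}^k \times C(X)$ respects the product decomposition. Combining (a) and (b) yields the splitting.

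For (a), I would use the cone distance formula: for $r_1,r_2 \ge 0$ and $z_1, z_2 \in X$,
$$d((r_1,z_1),(r_2,z_2))^2 = r_1^2 + r_2^2 - 2 r_1 r_2 \cos d_X(z_1,z_2),$$
which is valid whenever $d_X(z_1,z_2) \le \pi$, hence always under $\mathrm{diam}(X)<\pi$. A direct check then shows that no two points at common distance $\epsilon$ from $x$ can be at mutual distance $2\epsilon$, while at any point $(r_0,z_0)$ with $r_0>0$ the radial segment furnishes such an antipodal pair $(r_0\pm\epsilon,z_0)$. Hence $x$ is the unique point through which no nontrivial minimizing geodesic passes, and any isometry of $C(X)$ must fix it. The induced action on the space of unit radial directions at $x$, canonically identified with $X$, then gives the isomorphism.

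For (b), I would first observe that every line in $Y$ is horizontal, i.e., contained in a leaf $\mathbb{R}^k\times\{z\}$. Writing such a line as $(\alpha(t),\beta(t))$ in the product metric forces $\alpha,\beta$ to be constant-speed geodesics with $|\alpha'|^2+|\beta'|^2=1$; if the speed of $\beta$ were positive, then $\beta$ would be a line in $C(X)$, contradicting that $C(X)$ has no line when $\mathrm{diam}(X)<\pi$. Hence the foliation $\{\mathbb{R}^k\times\{z\}\}_{z\in C(X)}$ is intrinsic and preserved by every $\phi\in\mathrm{Isom}(Y)$, descending to an isometry $\psi$ of the leaf space $C(X)$. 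Writing $\phi(v,z)=(A_z v+b_z,\psi(z))$ with $A_z\in O(k)$ leaf-wise, the identity $d(\phi(v,z_1),\phi(v,z_2))=d((v,z_1),(v,z_2))$ combined with $d(\psi(z_1),\psi(z_2))=d(z_1,z_2)$ forces $A_z v+b_z=A_{z'}v+b_{z'}$ for all $v,z,z'$, so both $A_z$ and $b_z$ are independent of $z$. This yields the product splitting.

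The ``in particular'' statement then follows immediately: since any $\psi\in\mathrm{Isom}(C(X))$ fixes $x$, we have $h(y)=h(0,x)=(b,x)\in\mathbb{R}^k\times\{x\}$ for every $h\in H$. I anticipate that the main subtlety is (a), namely articulating the characterization of the vertex in a manner that is manifestly preserved under arbitrary isometries, while ruling out degenerate possibilities arising when $X$ is a single point or has very small diameter.
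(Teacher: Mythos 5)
Your proof is correct and amounts to the standard argument that the paper leaves implicit: the paper merely remarks that $C(X)$ contains no lines when $\mathrm{diam}(X)<\pi$ and states the proposition without proof. Both halves of your argument — the metric characterization of the vertex via the cone distance formula, and the fact that every line in $Y$ must project to a constant in the $C(X)$-factor, so the horizontal foliation and hence the product decomposition is intrinsic — are exactly the ingredients the authors are invoking.
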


Proposition \ref{pre:isom_split} immediately implies the following: 

\begin{cor}\label{pre:cone_orb_dim}
   Let $Y=\mathbb{R}^k \times C(X)$ be a metric cone, where $\text{diam}(X)<\pi$. Suppose that $\mathrm{Isom}(Y)$ contains a closed subgroup $\mathbb{R}^b$. Then $b\le k$.
\end{cor}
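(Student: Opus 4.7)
The plan is to apply Proposition \ref{pre:isom_split} and reduce the statement to a stabilizer dimension count at the cone point. Write $y=(0,x_0)\in Y$, where $x_0$ is the vertex of $C(X)$. By Proposition \ref{pre:isom_split}, every $h \in H$ decomposes as $(\alpha,\beta)$ with $\alpha \in \mathrm{Isom}(\mathbb{R}^k)$ and $\beta \in \mathrm{Isom}(X)$; since $\beta$ fixes $x_0$, the orbit map $h\mapsto hy$ takes values in $\mathbb{R}^k\times\{x_0\}$ and the isotropy subgroup is
\[
H_y \;=\; H \cap \bigl(O(k)\times\mathrm{Isom}(X)\bigr).
\]

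Next, I would argue that $H_y$ is trivial. Since $X$ is compact (as the cross-section of a cone of diameter $<\pi$ in the Ricci-limit setting where this corollary is to be applied), both $O(k)$ and $\mathrm{Isom}(X)$ are compact, so $O(k)\times\mathrm{Isom}(X)$ is compact. Thus $H_y$ is a closed subgroup of $H \cong \mathbb{R}^b$ contained in a compact set, hence itself compact. But $\mathbb{R}^b$ contains no nontrivial compact subgroup, so $H_y=\{e\}$.

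A dimension count then finishes the argument. With trivial isotropy, $H$ acts freely at $y$, so the orbit $Hy$ is a smoothly immersed submanifold of $\mathbb{R}^k\times\{x_0\}\cong\mathbb{R}^k$ of dimension $b$, which forces $b\le k$.

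The main point to verify carefully is the compactness of $\mathrm{Isom}(X)$, which underlies the stabilizer argument. In the applications of interest this is automatic from compactness of $X$, but in principle one could argue directly: any nontrivial $1$-parameter subgroup $L\subset H_y$ would have compact closure in $\mathrm{Isom}(Y)$ (from boundedness of both the $O(k)$ and $\mathrm{Isom}(X)$ components acting on compact $X$), while $L$ is closed in $\mathbb{R}^b$ and non-compact, contradicting that $H$ is closed in $\mathrm{Isom}(Y)$.
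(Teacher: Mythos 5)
Your proof is correct and fills in what the paper leaves as an immediate consequence of Proposition \ref{pre:isom_split}: the orbit $Hy$ lies in $\mathbb{R}^k\times\{x\}$, the isotropy $H_y$ is a compact (hence trivial) subgroup of $\mathbb{R}^b$, so the orbit is a free $\mathbb{R}^b$-orbit inside $\mathbb{R}^k$, giving $b\le k$. The hypothesis you rightly flag---compactness of $X$ and hence of $\mathrm{Isom}(X)$---is indeed implicit in the paper's usage, where $Y$ arises as a proper Ricci limit cone; equivalently one could invoke properness of $Y$ to see that the full stabilizer of $y$ in $\mathrm{Isom}(Y)$, and therefore $H_y$, is compact.
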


\subsection{Equivariant Gromov-Hausdorff convergence} 
The theory of equivariant Gromov-Hausdorff convergence was developed by Fukaya and Yamaguchi \cite{eGH1,eGH2}. Below, a tuple $(X,p,G)$ will always denote a complete and locally compact length space $X$, a point $p\in X$, and a closed subgroup $G$ of the isometry group of $X$. Given $R>0$, we write   
$$G(R)=\{g\in G\mid d(gp,p)\le R\}.$$
\begin{defn} \cite{eGH1,eGH2}\label{eqvGHapp}
 Let $(X,p,G)$ and $(Y,q,H)$ be two spaces. Given an $\epsilon>0$, an $\epsilon$-equivariant Gromov-Hausdorff approximation ($\epsilon$-eGHA) from $(X,p,G)$ to $(Y,q,H)$  is a tuple of maps $(f,\phi,\psi)$ 
 $$f:B_{\epsilon^{-1}}(p)\rightarrow B_{\epsilon^{-1}}(q),\quad \phi:G(\epsilon^{-1})\rightarrow H(\epsilon^{-1}), \quad \psi:H(\epsilon^{-1})\rightarrow G(\epsilon^{-1}),$$  
such that the following hold:\\
(1) $f(p)=q$;\\
(2) the $\epsilon$-neighborhood of $f(B_{\epsilon^{-1}}(p))$ contains $B_{\epsilon^{-1}}(q)$;\\
(3) $|d(x_1,x_2)-d(f(x_1),f(x_2))|\leq \epsilon$ for all $x_1,x_2\in B_{\epsilon^{-1}}(p)$;\\
(4) if $g\in G(\epsilon^{-1})$ and $x,gx\in B_{\epsilon^{-1}}(p)$, then $d(f(gx),\phi(g)f(x))\leq \epsilon$;\\
(5) if $h\in H(\epsilon^{-1})$ and $x, \psi(h)x\in B_{\epsilon^{-1}}(p)$, then $d(f(\psi(h)x),hf(x))\leq \epsilon$.\\
We say that a sequence $(X_i,p_i,G_i)$ equivariant GH converges to $(Y,y,H)$, if there are $\epsilon_i\to 0$ and $\epsilon_i$-eGHA from  $(X_i,p_i,G_i)$ to $(Y,y,H)$ for every $i$.
\end{defn}

\begin{thm} \cite{eGH1,eGH2}
Let $(X_i,p_i)\rightarrow (Y,y)$ be a pointed Gromov-Hausdorff convergent sequence, and for each $i$ let $G_i$ be a closed subgroups of $\mathrm{Isom}(X_i)$. Then  \\
(1) after passing to a subsequence, we can find a closed subgroup $H$ of $\mathrm{Isom}(Y)$, such that we have pointed equivariant GH convergence:
$$(X_i,p_i,G_i)\xrightarrow{GH} (Y,y,H).$$
(2) the corresponding sequence of quotient spaces converges:
$$(X_i/G_i, \bar{p}_i)\xrightarrow{GH} (Y/H,\bar{y} )$$
\end{thm}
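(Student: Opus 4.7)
The plan is to prove part (1) by extracting limit isometries via an Arzel\`a--Ascoli/diagonal argument, and then deduce part (2) directly from the quotient pseudometric formula.

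\textbf{Part (1).} First I would fix pointed $\epsilon_i$-GH approximations $f_i : B_{R_i}(p_i) \to B_{R_i}(y)$ with $\epsilon_i \to 0$, $R_i \to \infty$ realizing $(X_i,p_i)\to(Y,y)$. For each fixed $R>0$, an element $g\in G_i(R)$ induces an ``almost-isometry'' $\hat{g}_i := f_i\circ g\circ f_i^{-1}$ on a large ball in $Y$ that is $O(\epsilon_i)$-close to an isometry and displaces $y$ by at most $R+O(\epsilon_i)$. Because $Y$ is a complete, locally compact length space, the set of genuine isometries of $Y$ that displace $y$ by a bounded amount is compact in the topology of uniform convergence on compact sets. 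Hence from any sequence $g_i\in G_i(R)$ one can extract a subsequential limit $g_\infty\in\mathrm{Isom}(Y)$ whose restriction to any fixed ball is a uniform limit of the $\hat{g}_i$.

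Next I would run a diagonal argument. Choose countable $\epsilon_i$-dense subsets $\Sigma_i\subset G_i(R_i)$, and extract a subsequence along which every element of $\bigsqcup_i\Sigma_i$ converges in the above sense. Let $H\subset\mathrm{Isom}(Y)$ be the closure of the resulting collection of limit isometries. Continuity of composition and inversion on $\mathrm{Isom}(Y)$ forces $H$ to be a closed subgroup. The maps $\phi_i$ and $\psi_i$ are then defined by sending $g\in G_i(R)$ to a nearest limit isometry in $H(R+o(1))$, respectively sending $h\in H(R)$ to a nearest preimage element $g\in G_i(R+o(1))$ from the chosen dense approximating set. The five conditions of Definition \ref{eqvGHapp} follow from the $\epsilon_i$-approximation estimate for $f_i$ together with the uniform-on-compacts convergence $\hat{g}_i\to g_\infty$.

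\textbf{Part (2).} Using the quotient pseudometric
$$d_{X_i/G_i}(\bar{x}_1,\bar{x}_2) = \inf_{g\in G_i} d_{X_i}(x_1,gx_2),$$
and the analogous formula for $Y/H$, define $\bar{f}_i(\bar{x}):=\overline{f_i(x)}$ on a ball around $\bar{p}_i$. Given $\bar x_1,\bar x_2$, pick an approximate minimizer $g\in G_i$ for the quotient distance; applying $f_i$ and using condition (4) of Definition \ref{eqvGHapp} with $h=\phi_i(g)$ gives the one-sided bound
$$d_{Y/H}(\bar{f}_i(\bar x_1),\bar{f}_i(\bar x_2)) \le d_{X_i/G_i}(\bar x_1,\bar x_2) + O(\epsilon_i).$$
The reverse bound uses $\psi_i$ to pull back an approximate minimizer in $H$. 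Surjectivity of $\bar{f}_i$ up to $O(\epsilon_i)$ comes from surjectivity of $f_i$ up to $\epsilon_i$. Together these show $\bar{f}_i$ is an $O(\epsilon_i)$-GH approximation between the quotients.

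\textbf{Main obstacle.} The delicate step is the diagonal extraction in Part (1): one must coordinate convergent subsequences across all scales $R_i$ and across ever denser subsets $\Sigma_i$, and then verify that the resulting limit set is closed under the group operations. The cleanest way is to extract pointwise and uniform-on-compacts limits \emph{first}, then \emph{define} $H$ to be their closure in $\mathrm{Isom}(Y)$; the group axioms are inherited by continuity of composition and inversion, and closedness is built in by construction. A secondary technical issue is checking condition (5) of Definition \ref{eqvGHapp}, which requires that $\psi_i$ be genuinely well-defined on all of $H(\epsilon_i^{-1})$ rather than only on the approximating set --- this follows because every $h\in H$ is, by construction, an $o(1)$-limit of elements of the form $\phi_i(g)$.
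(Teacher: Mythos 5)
The paper does not prove this statement; it is cited directly from Fukaya and Yamaguchi, so there is no ``paper's own proof'' to compare against. Your sketch reproduces the standard argument from those references---Arzel\`a--Ascoli extraction of limit isometries plus a diagonal argument over scales and dense subsets for part~(1), and a direct comparison of quotient pseudometrics via the $\phi_i,\psi_i$ maps for part~(2)---and it is essentially correct.

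Two small points worth tightening. First, $f_i$ is only a Gromov--Hausdorff approximation, not a bijection, so $\hat g_i = f_i\circ g\circ f_i^{-1}$ must be interpreted via approximate preimages (a coarse map that is $O(\epsilon_i)$-close to an isometry on a ball); the Arzel\`a--Ascoli step is then really a pointwise extraction over a countable dense subset of $Y$, using local compactness to bound $\hat g_i(z)$. Second, you do not need to take a further closure: the set of all $h\in\mathrm{Isom}(Y)$ arising as $h=\lim\hat g_i$ for \emph{some} choice of $g_i\in G_i$ along the fixed subsequence is already a subgroup (limits compose and invert term by term) and is already closed (a diagonal argument shows a limit of limits is a limit). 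Defining $H$ as the closure of limits of elements from the chosen dense sets $\Sigma_i$ works, but only because of this observation; stated as you wrote it, ``closure of the resulting collection'' being a subgroup requires the extra remark that the pre-closure set is closed under the group operations up to the approximation, which is exactly what the direct definition handles cleanly. Neither issue is a genuine gap.
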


\subsection{Flat manifolds}

Lastly, we review some results on flat manifolds.

\begin{prop}\cite[Proposition 4]{Ye} \label{open flat} Let $M$ be an open flat $n$-manifold and let $\Gamma=\pi_1(M,p)$. Then the following hold.\\
(1) $\Gamma$ has stable orbit growth of order $k$, where $k$ is the dimension of a soul of $M$.\\
(2) If $\mathbb{T}^{n-1}$ is a soul of $M$, then $M$ is either isometric to $\mathbb{R}\times \mathbb{T}^{n-1}$ or diffeomorphic to $\mathbb{M}^2\times \mathbb{T}^{n-2}$.    
\end{prop}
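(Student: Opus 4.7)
The plan is to exploit the classical flat structure of $M$. By the Cheeger-Gromoll soul theorem, $M$ admits a compact totally geodesic soul $S$ of dimension $k$. Its lift $\widetilde{S} \subset \widetilde{M} = \mathbb{R}^n$ is totally geodesic in a flat space, hence an affine $k$-subspace that is $\Gamma$-invariant. Placing the basepoint $\tilde{p} \in \widetilde{S}$ at the origin gives a $\Gamma$-invariant orthogonal splitting $\widetilde{M} = \widetilde{S} \oplus \widetilde{S}^\perp \cong \mathbb{R}^k \oplus \mathbb{R}^{n-k}$, and every $g \in \Gamma$ has the form
\[
g \cdot (x_B, x_F) = (A_g x_B + v_g, \; \rho(g) x_F),
\]
where $g \mapsto (A_g, v_g)$ is the Bieberbach action of $\Gamma \cong \pi_1(S)$ on $\widetilde{S}$ and $\rho \colon \Gamma \to O(n-k)$ is the linear fiber holonomy (no translation component because $\widetilde{S}$ is $\Gamma$-invariant).

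For part (1), the fiber orbit $\{\rho(g)\tilde{p}_F : g \in \Gamma\}$ lies on the sphere of radius $|\tilde{p}_F|$ about $0 \in \widetilde{S}^\perp$, so the fiber displacement is uniformly bounded by $2|\tilde{p}_F|$. Consequently
\[
\bigl| d(\tilde{p}, g\tilde{p}) - d(\tilde{p}_B, g\tilde{p}_B) \bigr| = O(1),
\]
and $\#\Gamma(R)$ differs from the Bieberbach orbit count on $\widetilde{S} \cong \mathbb{R}^k$ only by an additive shift of $R$. That count is $\asymp R^k$ by a volume comparison against a Dirichlet fundamental domain of the cocompact action on $\mathbb{R}^k$, giving (1).

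For part (2), one has $k = n-1$ and $\Gamma \cong \pi_1(\mathbb{T}^{n-1}) = \mathbb{Z}^{n-1}$. Because $\Gamma$ is itself abelian, a direct commutator argument upgrades Bieberbach's finite-index translation subgroup $L \le \Gamma$ to the entire group: if $g \in \Gamma$ has linear part $A_g \neq I$, then commutativity $g l = l g$ for every $l \in L$ gives $A_g v_l = v_l$ for every translation vector $v_l$ of $L$, and since those vectors span $\mathbb{R}^{n-1}$ one concludes $A_g = I$. Hence $\Gamma$ embeds as a full-rank translation lattice $L_B \subset \mathbb{R}^{n-1}$. The holonomy $\rho \colon \Gamma \to O(1) = \{\pm 1\}$ is now either trivial or surjective. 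If trivial, $\Gamma$ acts on $\widetilde{M}$ by translations in the base direction only, so $M \cong \mathbb{T}^{n-1} \times \mathbb{R}$ isometrically. If surjective, set $\Gamma_+ := \ker\rho$ (index $2$, still $\cong \mathbb{Z}^{n-1}$); applying the trivial case to $\Gamma_+$ yields $\widetilde{M}/\Gamma_+ \cong \mathbb{T}^{n-1} \times \mathbb{R}$, and any $g_0 \in \Gamma \setminus \Gamma_+$ takes the normal form $(x_B, x_F) \mapsto (x_B + v_0, -x_F)$. It descends to a free involution $(t, s) \mapsto (t + [v_0], -s)$ on $\mathbb{T}^{n-1} \times \mathbb{R}$, where $[v_0] \in (1/2) L_B / L_B$ is nonzero (otherwise composition of $g_0$ with a suitable translation in $\Gamma_+$ would fix an entire hyperplane of $\widetilde{M}$, contradicting freeness) and has order $2$ (since $g_0^2 \in \Gamma_+$ has base translation $2v_0 \in L_B$). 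Choosing a basis of $L_B$ so that $[v_0] = (1/2, 0, \ldots, 0)$, the quotient factors as $\mathbb{M}^2 \times \mathbb{T}^{n-2}$ diffeomorphically.

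The main obstacle is the final basis-change step in (2): one must verify that every nonzero element of $(1/2) L_B / L_B$ can be written as $(1/2) f_1$ for some basis $\{f_1, \ldots, f_{n-1}\}$ of $L_B$, which reduces to finding a primitive representative in the coset $2v_0 + 2L_B \subset L_B$, a standard lattice-theoretic fact. The remaining ingredients --- the affine nature of the soul lift, the linearity of the fiber holonomy, and the Bieberbach lattice-point count --- are all classical.
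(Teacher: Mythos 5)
Your proof is correct and self-contained. The paper itself only cites this statement from [Ye, Proposition 4] and supplies no argument, so there is no in-paper proof to compare against; the route you take---lifting the Cheeger--Gromoll soul to a $\Gamma$-invariant affine subspace $\widetilde S\subset\mathbb R^n$ through the basepoint (connectedness of $\pi^{-1}(S)$ coming from $\pi_1(S)\cong\pi_1(M)$), observing that the deck action therefore block-diagonalizes into a cocompact Bieberbach action on $\widetilde S$ plus a \emph{linear} holonomy $\rho:\Gamma\to O(n-k)$ on $\widetilde S^\perp$, comparing orbit counts to lattice-point counts against a Dirichlet domain for (1), and exploiting abelianness of $\pi_1(\mathbb T^{n-1})$ to force pure translations on the base and reduce to $\rho:\mathbb Z^{n-1}\to\{\pm1\}$ for (2)---is the classical one. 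Two small points are worth making explicit: with $\tilde p\in\widetilde S$ placed at the origin the fiber displacement is identically zero, not merely $O(1)$ (your bound is what one would use for a generic basepoint, and either version suffices since the growth order is basepoint-independent); and the faithfulness of the base representation $g\mapsto(A_g,v_g)$, which is what guarantees that the base lattice count really equals $\#\Gamma(R)$, follows from freeness of $\Gamma$ on $\widetilde M$ together with the fact that the fiber holonomy fixes the origin of $\widetilde S^\perp$. Neither is a gap, just a place where a careful reader would pause.
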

We also note that if $M$ is a closed flat $n$-manifold, then $\Gamma$ has stable orbit growth of order $n$.
\begin{prop}\label{pre:n-1_flat}
   Let $K$ be a closed $n$-manifold with $\mathrm{Ric}\ge 0$. If $b_1(K)=n-1$, then $K$ is flat.
\end{prop}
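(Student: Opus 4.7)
My plan is to reduce the statement to a dimension count via the classical Cheeger--Gromoll structure theorem for closed manifolds of nonnegative Ricci curvature: the Riemannian universal cover of $K$ splits isometrically as $\widetilde{K}=\mathbb{R}^k\times N$, where $N$ is closed, simply connected, and has $\mathrm{Ric}\ge 0$, and $\pi_1(K,p)$ contains a normal subgroup of finite index isomorphic to $\mathbb{Z}^k$ that acts by translations on the $\mathbb{R}^k$-factor and trivially on $N$.

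With this splitting in hand, I would first verify the inequality $b_1(K)\le k$. Letting $\widehat{K}:=\widetilde{K}/\mathbb{Z}^k=\mathbb{T}^k\times N$, the K\"unneth formula together with the simple connectivity of $N$ gives $b_1(\widehat{K})=k$. Since $\widehat{K}\to K$ is a finite cover, the transfer homomorphism realizes $H_1(K;\mathbb{Q})$ as a direct summand of $H_1(\widehat{K};\mathbb{Q})$, and therefore $b_1(K)\le b_1(\widehat{K})=k$.

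The hypothesis $b_1(K)=n-1$ now forces $k\ge n-1$, hence $\dim N=n-k\le 1$. Because no closed simply connected manifold of dimension one exists, we are left with $\dim N=0$; that is, $N$ is a point, $k=n$, and $\widetilde{K}=\mathbb{R}^n$. A manifold whose Riemannian universal cover is flat is itself flat, and the proposition follows.

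The plan is essentially a bookkeeping exercise once Cheeger--Gromoll is invoked, so I do not expect a genuine obstacle. The only step that deserves a moment of care is the inequality $b_1(K)\le k$, which I would justify through the transfer map as above; alternatively, one can derive it directly from the fact that $\pi_1(K)$ is a finite extension of $\mathbb{Z}^k$, so that the rank of its abelianization is at most $k$.
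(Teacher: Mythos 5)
Your argument is correct and rests on the same underlying tool as the paper's (the Cheeger--Gromoll splitting/structure theorem together with a dimension count), but the route through it differs a little. The paper works with the intermediate cover $\overline{K}=\widetilde{K}/[\Gamma,\Gamma]$, which by the Cheeger--Gromoll theorem for normal covers of a compact nonnegatively Ricci curved manifold splits isometrically as $\mathbb{R}^l\times\overline{N}$ with $\overline{N}$ compact; the free abelian part $\mathbb{Z}^b\leq H_1(K,\mathbb{Z})$ of the deck group must then embed as a discrete subgroup of $\mathrm{Isom}(\mathbb{R}^l)$ (the projection to $\mathrm{Isom}(\overline{N})$ has finite, hence trivial, kernel), forcing $b\le l$ directly. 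You instead invoke the full structure of $\pi_1(K)$ as virtually $\mathbb{Z}^k$ on the de Rham decomposition of the universal cover, pass to the corresponding finite cover $\widehat{K}$, and use the rational transfer to get $b_1(K)\le b_1(\widehat{K})=k$. Both are correct and of comparable length; the paper's version avoids quoting the crystallographic structure of $\pi_1$ by phrasing the bound directly as an embedding constraint.

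One inaccuracy worth flagging: the version of Cheeger--Gromoll you state, asserting that the finite-index $\mathbb{Z}^k$ subgroup acts \emph{trivially} on the compact factor $N$, is not true in general. For instance, $\mathbb{Z}$ can act on $\mathbb{R}\times S^2$ by translation on $\mathbb{R}$ coupled with an infinite-order rotation of $S^2$; the quotient is a closed manifold with $\mathrm{Ric}\geq 0$, $\pi_1=\mathbb{Z}$, and no finite-index subgroup acting trivially on $S^2$. Consequently $\widehat{K}$ need not be the Riemannian product $\mathbb{T}^k\times N$, and the K\"unneth step does not literally apply. This is a benign slip: since $\widetilde{K}$ is simply connected, $\pi_1(\widehat{K})\cong\mathbb{Z}^k$ regardless of how $\mathbb{Z}^k$ acts on $N$, so $H_1(\widehat{K};\mathbb{Z})\cong\mathbb{Z}^k$ and $b_1(\widehat{K})=k$ still holds. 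With that small repair your proof goes through.
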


\begin{proof}
   This is a direct consequence of Cheeger-Gromoll splitting theorem \cite{CG_split}. We include a brief proof here for readers' convenience.


   Recall that $b=b_1(K)$ is the rank of the abelian group $H_1(K,\mathbb{Z})=\Gamma/[\Gamma,\Gamma]$, where $\Gamma=\pi_1(K,x)$. $H_1$ acts isometrically, freely, and discretely on the intermediate cover $\overline{K}:= \widetilde{K}/[\Gamma,\Gamma]$. By the splitting theorem \cite{CG_split} and the compactness of $K$, the intermediate cover $\overline{K}$ splits isometrically as $\mathbb{R}^l \times \overline{N}$, where $l\le n$ and $\overline{N}$ is compact. Because $\mathrm{Isom}(\mathbb{R}^l\times \overline{N})\cong \mathrm{Isom}(\mathbb{R}^l)\times \mathrm{Isom}(\overline{N})$ contains a closed subgroup $\mathbb{Z}^b\leq H_1$, we see that $b\le l\le n$.
	
    Now we assume that $b=n-1$. Then $l\ge n-1$. As a result, the universal cover $\widetilde{K}$ splits off $\mathbb{R}^{n-1}$ isometrically. Hence $\widetilde{K}$ is isometric to $\mathbb{R}^n$ and $K$ is flat.
\end{proof}

\section{First Betti number and rigidity}

In this section, we study the first Betti number rigidity and prove Theorems \ref{thm:main_b1}, \ref{thm:codim2}. We start with a result on the equivariant GH convergence of denser and denser $\mathbb{Z}^b$-action. This will be later applied to asymptotic cones of an open manifold $M$ or tangent cones of a Ricci limit space at a point.

\begin{lem}\label{lem:topol_dim}
    Let $(X_i,p_i)\in \mathcal{M}(n,-1)$ be a Gromov-Hausdorff convergent sequence of Ricci limit spaces
    $$(X_i,p_i)\overset{GH}\longrightarrow (Y,y).$$
    Suppose that for each $i$, we have a closed subgroup $\Gamma_i=\mathbb{Z}^b\leq \mathrm{Isom}(X_i)$ with generators $\{\gamma_{i,1},...,\gamma_{i,b}\}$ such that $d(\gamma_{i,j}p_i,p_i)\to 0$ as $i\to\infty$, where $j=1,...,b$. After passing to a subsequence, we consider the convergence $$(X_i,p_i,\Gamma_i)\overset{GH}\longrightarrow (Y,y,H).$$
    Then the limit group $H$ contains a closed subgroup $\mathbb{R}^b$.
\end{lem}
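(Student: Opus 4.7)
\medskip
\noindent\textbf{Proof plan.} The plan is to produce $b$ commuting continuous one-parameter subgroups of $H$ by taking equivariant GH limits of suitable powers of the generators $\gamma_{i,j}$, to combine them into a continuous homomorphism $\Phi\colon\mathbb{R}^{b}\to H$, and to verify that its image is the required closed subgroup $\mathbb{R}^{b}$.

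For each $j$, since $\Gamma_{i}$ is a closed (hence discrete) subgroup of $\mathrm{Isom}(X_{i})$ and $\gamma_{i,j}$ has infinite order, the orbit $\{\gamma_{i,j}^{n}p_{i}\}_{n\in\mathbb{Z}}$ is an infinite discrete subset of the proper space $X_{i}$ and therefore unbounded. I would choose $n_{i,j}\in\mathbb{N}$ minimal with $d(\gamma_{i,j}^{n_{i,j}}p_{i},p_{i})\ge 1$, so that $d(\gamma_{i,j}^{n_{i,j}}p_{i},p_{i})\to 1$ and $n_{i,j}\to\infty$ by the triangle inequality together with the hypothesis $d(\gamma_{i,j}p_{i},p_{i})\to 0$. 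Setting $\alpha_{i,j}(t):=\gamma_{i,j}^{\lfloor tn_{i,j}\rfloor}$ and writing $\lfloor tn_{i,j}\rfloor=qn_{i,j}+r$ with $0\le r<n_{i,j}$, the bound $d(\gamma_{i,j}^{r}p_{i},p_{i})<1$ (from the minimality of $n_{i,j}$) combined with the triangle inequality gives a linear-in-$|t|$ upper bound on $d(\alpha_{i,j}(t)p_{i},p_{i})$, uniform in $i$. A diagonal argument over rational $t$ and $j\in\{1,\dots,b\}$, combined with Arzel\`a--Ascoli and the equivariant GH convergence, then extracts limits $\phi_{j}(t)\in H$. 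Because $\alpha_{i,j}(s)\alpha_{i,j}(t)$ and $\alpha_{i,j}(s+t)$ differ by at most one factor of $\gamma_{i,j}^{\pm 1}$ (whose displacement at $p_{i}$ tends to $0$), each $\phi_{j}\colon\mathbb{Q}\to H$ is a group homomorphism in the limit, and the commutativity of $\Gamma_{i}$ descends to commutativity of the $\phi_{j}$'s. Continuous extension to $\mathbb{R}$ yields the continuous homomorphism $\Phi(t_{1},\dots,t_{b})=\phi_{1}(t_{1})\cdots\phi_{b}(t_{b})\colon\mathbb{R}^{b}\to H$.

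The main obstacle is to upgrade $\Phi$ to an embedding whose image is closed. Since $H$ is abelian as a limit of abelian groups and is a Lie group by Theorem~\ref{pre:isom_Lie}, one has $H\cong\mathbb{R}^{a}\times K$ with $K$ compact abelian, and the required $\mathbb{R}^{b}\subset H$ exists precisely when $a\ge b$. To force this I would argue by a lattice-count comparison: the free abelian rank of $\Gamma_{i}=\mathbb{Z}^{b}$ gives the lower bound $\#\Gamma_{i}(R)\gtrsim R^{b}/\prod_{j}d(\gamma_{i,j}p_{i},p_{i})$, obtained by enumerating the group elements $\prod_{j}\gamma_{i,j}^{n_{j}}$ with $\sum_{j}|n_{j}|\,d(\gamma_{i,j}p_{i},p_{i})\le R$; on the limit side, the abelian Lie structure of $H$ with $K$ compact forces a polynomial upper bound of order $R^{a}$ on the corresponding orbit counts. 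Matching exponents as $i\to\infty$ then forces $a\ge b$. The delicate point, where the strict discreteness of the $\Gamma_{i}$ in $\mathrm{Isom}(X_{i})$ is essential, is to carry out this comparison rigorously in the equivariant GH setting and to rule out a "rotational" conspiracy that would collapse the $\mathbb{Z}^{b}$-structure into the compact factor $K$.
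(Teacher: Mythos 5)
Your overall strategy --- extract one-parameter subgroups $\phi_j\colon\mathbb{R}\to H$ as limits of rescaled powers $\gamma_{i,j}^{\lfloor t n_{i,j}\rfloor}$ and then show they span an $\mathbb{R}^b$ --- is genuinely different from the paper's argument, and it has two gaps, one of which you yourself flag as unresolved.

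The smaller gap is in the construction of $\phi_j$ itself. To extract a limit by Arzel\`a--Ascoli you need the maps $t\mapsto\alpha_{i,j}(t)p_i$ to be equicontinuous uniformly in $i$, and the triangle inequality alone does not give this: the displacement function $m\mapsto d(\gamma_{i,j}^m p_i,p_i)$ is subadditive but not monotone, so knowing $d(\gamma_{i,j}^{n_{i,j}}p_i,p_i)\approx 1$ and $d(\gamma_{i,j}p_i,p_i)\to 0$ does not immediately force $d(\gamma_{i,j}^{m}p_i,p_i)$ to be small for all $m\le\delta n_{i,j}$. The minimality of $n_{i,j}$ only gives a bound by $1$, not a bound tending to $0$ with $\delta$. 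This is plausibly fixable, but it is more delicate than the sentence ``a linear-in-$|t|$ upper bound uniform in $i$'' suggests.

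The substantial gap is the one you acknowledge in the last paragraph: upgrading $\Phi\colon\mathbb{R}^b\to H$ to an embedding with closed image, i.e.\ showing $a\ge b$ when $H_0\cong\mathbb{R}^a\times K$. The lattice-count lower bound $\#\Gamma_i(R)\gtrsim R^b/\prod_j d(\gamma_{i,j}p_i,p_i)$ is correct, but it is a raw count that diverges as $i\to\infty$ for every fixed $R$, while the limit orbit is a continuum; ``matching exponents as $i\to\infty$'' is not a well-defined comparison. To turn this into an argument you would need a uniform \emph{capacity} estimate (the number of $\epsilon$-separated points in $\Gamma_i(R)p_i$ for fixed $\epsilon,R$), which the raw count does not give --- the orbit points could in principle be highly clustered even though there are many of them. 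You correctly identify that ruling out the ``rotational conspiracy'' is the delicate point, but the proposal stops at identifying the difficulty rather than resolving it; as written this is the key missing step, not a routine verification.

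The paper's proof avoids both issues with a different mechanism. Rather than building continuous one-parameter subgroups, it builds, for every $d>0$, a finite chain $\{p_i,\gamma_i p_i,\dots,\gamma_i^{m_i}p_i\}$ whose GH limit $S\subseteq Hy$ is compact, connected (proved by a short contradiction using that consecutive points are close), contains $y$, and reaches distance exactly $d$ from $y$. Since $d$ is arbitrary, $H_0y$ is unbounded, which already forces a closed $\mathbb{R}$ inside $H_0$. The passage from $b$ to $b+1$ is then done by quotienting by the limit $L$ of $\Lambda_i=\langle\gamma_{i,1},\dots,\gamma_{i,b}\rangle$ and running the same chain argument in $H/L$ acting on $Y/L$. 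This inductive quotient step is exactly what substitutes for your unproven ``independence'' of the $\phi_j$'s. If you want to complete your approach, you would essentially need to reproduce that inductive-quotient idea (or a genuine capacity/Hausdorff-dimension argument along the lines of the paper's later Proposition~\ref{prop:stable_dimH}), at which point the detour through one-parameter subgroups buys you nothing over the paper's direct construction.
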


In particular, the limit orbit $Hy$ has topological dimension at least $b$ since the closed subgroup $\mathbb{R}^b$ must act freely.

\begin{proof}[Proof of Lemma \ref{lem:topol_dim}]
   We note that each $\Gamma_i$ must act freely on $X_i$; otherwise, $\Gamma_i=\mathbb{Z}^b$ would have a nontrivial compact subgroup as the isotropy subgroup at some point. Since $\Gamma_i$ is abelian, by Theorem \ref{pre:isom_Lie} its limit $H$ is an abelian Lie group. Hence its connected component subgroup $H_0$ is isomorphic to $\mathbb{R}^l \times \mathbb{T}$, where $\mathbb{T}$ is a torus. We show that $l\ge b$ by induction in $b$. 

   We first prove the base case $b=1$. We shall prove that for any $d>0$, there exists an orbit point $hy\in Hy$ with $d(hy,y)=d$ and a compact connected subset $S\subseteq Hy$ containing both $y$ and $hy$. This property implies that $H_0$ contains a closed $\mathbb{R}$-subgroup.

   Let $d>0$ and let $\gamma_i$ be a generator of $\Gamma_i=\mathbb{Z}$. For each $i$, because $\Gamma_i=\mathbb{Z}$ is a closed subgroup of $\mathrm{Isom}(M_i)$, we know that $\Gamma_i$ acts discretely on $M_i$; in particular, $d(\gamma_i^m p_i,p_i)\to \infty$ as $m\to\infty$. For each $i$, we pick $m_i\in\mathbb{Z}_+$ by
   $$m_i= \inf\{ m\in\mathbb{Z}_+ | d(\gamma_i^m p_i,p_i)\ge d \}.$$
   Let us consider a sequence of subsets in $X_i$:
   $$S_i=\{ p_i,\gamma_ip_i,...,\gamma_i^{ m_i}p_i \}\subseteq X_i.$$
   After passing to a subsequence, we have convergence
   $$(X_i,p_i,S_i,\gamma_i^{m_i},\Gamma_i)\overset{GH}\longrightarrow (Y,y,S,h,H).$$
   Because $d(\gamma_ip_i,p_i)\to 0$, the above limit space satisfies
   $$ d(hy,y)=d,\quad  \{y,hy\}\subseteq S \subseteq \overline{B_d}(y) \cap Hy.$$
   It remains to show that $S$ is connected. Suppose not, then by compactness of $S$, we can 
   find two disjoint compact subsets of $S$, say $\mathcal{C}_1$ and $\mathcal{C}_2$, such that
   $$y\in \mathcal{C}_1,\quad  \mathcal{C}_1\cup \mathcal{C}_2=S,\quad d(\mathcal{C}_1,\mathcal{C}_2)=\delta>0.$$
We choose a point $z\in\mathcal{C}_2$. Then $p_i\to y$, and there is $0\le t_i\leq m_i$ such that $\gamma_i^{t_i}p_i\to z$. Since $d(p_i,\gamma_ip_i)\to 0$ as $i\to\infty$, we can find $1<l_i<t_i$ such that $\gamma^{l_i}p_i \to z'$ with
$d(\mathcal{C}_1,z')=\delta/2$. Now we have a point $z'\in S$ but $z'\notin \mathcal{C}_1\cup\mathcal{C}_2$, a contradiction. This proves the base case $b=1$.


Next, suppose that the statement holds for $b$, we shall prove it for $b+1$. For each $i$, we set a subgroup $\Lambda_i=\langle \gamma_{i,1},...,\gamma_{i,b} \rangle \leq \Gamma_i$. For any $d>0$, because $\Gamma_i$ is a discrete subgroup of $\mathrm{Isom}(X_i)$, we can choose $m_i\in \mathbb{Z}_+$ by
   $$m_i=\inf \{ m\in \mathbb{Z}_+\ |\ d( \gamma_{i,b+1}^{m}\Lambda_i p_i, p_i)\ge d \}.$$
Let us consider the subset
   $$S_i=\{ \Lambda_ip_i,\gamma_{i,b+1}\Lambda_ip_i,...,\gamma_{i,b+1}^{ m_i}\Lambda_ip_i \}.$$
   We choose an element $g_i \in \gamma_{i,b+1}^{m_i}\Lambda_i$ such that
$$d(g_ip_i,p_i)=d(\gamma_{i,b+1}^{m_i}\Lambda_ip_i,p_i)\ge d.$$
   After passing to a subsequence, we have convergence
   $$(X_i,p_i,S_i,g_i,\Lambda_i,\Gamma_i)\overset{GH}\longrightarrow (Y,y,S,h,L,H).$$
   It follows from the construction that
   $$d(hy,y)=d(hy,Ly)=d,\quad d(s,Ly)\le d \text{ for all } s\in S.$$
   Let $\pi_H:H\to H/L$ and $\pi_Y:Y\to Y/L$ be the quotient maps.  By the same argument as before, $\pi_H(S)\bar{y}$, where $\bar{y}=\pi_Y(y)$, is a compact and connected subset of $\overline{B_d}(\bar{y})$; moreover $\pi_H(S)\bar{y}$ contains two points $\pi_H(g)\bar{y}$ and $\bar{y}$ that are exactly $d$-apart. Because $d$ is arbitrary, this proves that $H/L$ contains a closed $\mathbb{R}$-subgroup acting effectively on $Y/L$. This completes the inductive step and thus the proof.
\end{proof}

Two corollaries below follow directly from Lemma \ref{lem:topol_dim}.

\begin{cor}\label{cor:asy_topol_dim}
   Let $(M,p)$ be an open $n$-manifold of $\mathrm{Ric}\ge 0$. Suppose that $\mathrm{Isom}(M)$ contains a closed subgroup $\Gamma=\mathbb{Z}^b$. For any $r_i\to\infty$, we consider the convergence $$(r_i^{-1}M,p,\Gamma)\overset{GH}\longrightarrow (Y,y,G).$$
   Then the limit group $G$ contains a closed subgroup $\mathbb{R}^b$. In particular, the orbit $Gy$ has topological dimension at least $b$.
\end{cor}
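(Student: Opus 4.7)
The plan is to reduce the corollary directly to Lemma \ref{lem:topol_dim} by verifying its hypotheses for the rescaling sequence. Let $\gamma_1,\dots,\gamma_b$ be a fixed set of generators of $\Gamma=\mathbb{Z}^b$, with displacements $c_j:=d_M(\gamma_j p,p)<\infty$. Set $X_i:=r_i^{-1}M$ and view $\Gamma_i:=\Gamma$ as a closed subgroup of $\mathrm{Isom}(X_i)=\mathrm{Isom}(M)$ (rescaling does not change the isometry group as an abstract topological group). Because $\mathrm{Ric}_{M}\ge 0$ implies $\mathrm{Ric}_{X_i}\ge 0\ge -(n-1)$, we have $(X_i,p)\in \mathcal{M}(n,-1)$, and the given equivariant convergence $(X_i,p,\Gamma_i)\to(Y,y,G)$ is obtained by passing to a subsequence using the standard Gromov precompactness together with equivariant Fukaya-Yamaguchi convergence.

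Next I would check the key shrinking condition. In the rescaled metric,
\[
d_{X_i}(\gamma_{i,j} p, p)=r_i^{-1} d_M(\gamma_j p,p)=r_i^{-1}c_j \longrightarrow 0
\]
as $i\to\infty$ for each $j=1,\dots,b$. Thus all hypotheses of Lemma \ref{lem:topol_dim} are met, and the lemma yields a closed subgroup $\mathbb{R}^b\le G$, proving the first assertion.

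For the topological dimension statement, the orbit $Gy$ contains the $\mathbb{R}^b$-orbit of $y$. As noted after Lemma \ref{lem:topol_dim}, this $\mathbb{R}^b$-action is free, so the orbit is homeomorphic to $\mathbb{R}^b$ and hence has topological dimension at least $b$, giving $\dimT(Gy)\ge b$.

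There is essentially no real obstacle here, since Lemma \ref{lem:topol_dim} has done the heavy lifting; the only subtlety is the bookkeeping verification that the group $\Gamma$ stays fixed under rescaling while the displacements of its generators collapse to zero, which is immediate. If one wanted to be more careful about the freeness claim invoked at the end, one could note that a closed subgroup of the abelian Lie group $G$ isomorphic to $\mathbb{R}^b$ with nontrivial continuous isotropy at $y$ would, after quotienting by the isotropy, contradict the construction in the proof of Lemma \ref{lem:topol_dim}, which produces orbit points at every prescribed distance from $y$ along each $\mathbb{R}$-factor.
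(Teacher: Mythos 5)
Your proposal is correct and follows exactly the route the paper intends: the paper states that Corollary \ref{cor:asy_topol_dim} follows directly from Lemma \ref{lem:topol_dim}, and your verification — that $d_{r_i^{-1}M}(\gamma_j p, p) = r_i^{-1}c_j \to 0$ for each fixed generator, so the lemma's shrinking hypothesis holds — is precisely the intended (and essentially trivial) reduction. The only minor stylistic point is that the freeness of the $\mathbb{R}^b$-action on $y$ is most cleanly seen by noting that the isotropy at $y$ is a compact subgroup of $\mathbb{R}^b$, hence trivial, rather than via the indirect contradiction you sketch; but that does not affect correctness.
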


\begin{cor}\label{cor:tan_topol_dim}
   Let $(X,p)$ be a Ricci limit space. Suppose that $\mathrm{Isom}(X)$ contains a closed subgroup $G=\mathbb{R}^b$. For any $r_i\to\infty$, we consider the convergence $$(r_i X,p,G)\overset{GH}\longrightarrow (Y,y,H).$$
   Then the limit group $H$ contains a closed subgroup $\mathbb{R}^b$. In particular, the orbit $Gy$ has topological dimension at least $b$.
\end{cor}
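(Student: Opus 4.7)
The plan is to reduce Corollary \ref{cor:tan_topol_dim} to Lemma \ref{lem:topol_dim} by discretizing the continuous $\mathbb{R}^b$-action along a lattice whose generators shrink faster than the blow-up rate $r_i$. First, since $X\in\mathcal{M}(n,-(n-1)\kappa)$ for some $\kappa\ge 0$, the rescaled space $r_iX$ lies in $\mathcal{M}(n,-(n-1)\kappa r_i^{-2})\subseteq \mathcal{M}(n,-1)$ for all sufficiently large $i$, so $(r_iX,p)$ is an admissible input for Lemma \ref{lem:topol_dim}.

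Next, I would fix a basis $\{e_1,\dots,e_b\}$ of $G\cong\mathbb{R}^b$. Because the orbit map $G\times X\to X$ is continuous and $G$ acts by isometries, for any auxiliary sequence $\eta_i\to 0$ I can choose $\lambda_i>0$ small enough that $d_X(\lambda_i e_j\cdot p,p)\le \eta_i/r_i$ for $j=1,\dots,b$. Set $\gamma_{i,j}:=\lambda_i e_j\in G$ and $\Gamma_i:=\langle \gamma_{i,1},\dots,\gamma_{i,b}\rangle\cong\mathbb{Z}^b$. Since $\Gamma_i$ is a lattice in $G$ and $G$ is closed in $\mathrm{Isom}(X)=\mathrm{Isom}(r_iX)$, $\Gamma_i$ is a closed subgroup of $\mathrm{Isom}(r_iX)$, and by construction $d_{r_iX}(\gamma_{i,j}p,p)=r_i\,d_X(\gamma_{i,j}p,p)\le \eta_i\to 0$. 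Applying Lemma \ref{lem:topol_dim} to $(r_iX,p,\Gamma_i)$ and passing to a subsequence, I obtain equivariant convergence
\[(r_iX,p,\Gamma_i)\overset{GH}\longrightarrow (Y,y,H')\]
with $H'$ containing a closed $\mathbb{R}^b$-subgroup whose orbit at $y$ has topological dimension at least $b$. After taking a further subsequence so that the full group $G$ also converges, I get simultaneous convergence $(r_iX,p,\Gamma_i,G)\overset{GH}\longrightarrow (Y,y,H',H)$; the inclusion $H'\le H$ is automatic from $\Gamma_i\le G$ via the equivariant approximations. Therefore $H$ contains a closed $\mathbb{R}^b$, and $Hy\supseteq H'y$ has topological dimension at least $b$, which gives the corollary.

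The main technical step is verifying the hypothesis of Lemma \ref{lem:topol_dim} for the discretized action, namely closedness of $\Gamma_i$ in $\mathrm{Isom}(r_iX)$ together with the decay $d_{r_iX}(\gamma_{i,j}p,p)\to 0$. Both are arranged by the construction above: any lattice in $\mathbb{R}^b$ is discrete and hence closed in the closed subgroup $G$, and continuity of the orbit map at the identity allows the lattice spacing $\lambda_i$ to beat any prescribed rate, in particular $\eta_i/r_i$. Everything else is a transfer of Lemma \ref{lem:topol_dim} through the inclusion $\Gamma_i\le G$ under equivariant Gromov-Hausdorff convergence.
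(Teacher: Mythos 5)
Your proof is correct, and it is essentially the argument the paper has in mind: the paper states this corollary as an immediate consequence of Lemma \ref{lem:topol_dim}, and the natural (and really the only) way to invoke that lemma for the continuous group $G=\mathbb{R}^b$ under a blow-up is exactly your discretization to lattices $\Gamma_i\cong\mathbb{Z}^b$ whose generators shrink faster than $r_i^{-1}$, so that $d_{r_iX}(\gamma_{i,j}p,p)\to 0$. Your remaining steps — closedness of the lattice in $\mathrm{Isom}(r_iX)$, the rescaled Ricci bound placing $r_iX$ in $\mathcal{M}(n,-1)$ for large $i$, and the inclusion $H'\le H$ obtained from $\Gamma_i\le G$ under a common set of equivariant GH approximations — are all correct and complete the reduction.
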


Now we prove a first Betti number $b_1(M)$ estimate by the rectifiable dimension of some asymptotic cone of $M$.

\begin{thm}\label{thm:b1_rect_dim}
   Let $M$ be a complete $n$-manifold with $\mathrm{Ric}\ge 0$. Suppose that $M$ has an asymptotic cone of rectifiable dimension $k$, then $b_1(M)\le n-k$.
\end{thm}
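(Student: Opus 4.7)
Set $b=b_1(M)$; the aim is to show $b+k\le n$. I will work on the intermediate abelian cover $\overline{M}=\widetilde{M}/[\Gamma,\Gamma]$, where $\Gamma=\pi_1(M,p)$. This is a complete $n$-manifold with $\Ric\ge 0$ on which $H_1(M,\mathbb{Z})$ acts by deck isometries, and its free part supplies a closed subgroup $\Lambda\cong\mathbb{Z}^b\le\mathrm{Isom}(\overline{M})$. Since $\overline{M}/\Lambda$ is a finite cover of $M$, their asymptotic cones coincide. Let $r_i\to\infty$ realize the given asymptotic cone $(Y,y)$ of rectifiable dimension $k$. Passing to a subsequence,
$$(r_i^{-1}\overline{M},\bar p,\Lambda)\overset{GH}\longrightarrow(\overline{Y},\bar y,G)$$
equivariantly, with quotient $(Y,y)$. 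Applying Corollary \ref{cor:asy_topol_dim} to $\overline{M}$ with its $\Lambda$-action, $G$ contains a closed subgroup $\mathbb{R}^b$ whose orbit through $\bar y$ has topological dimension at least $b$, and $\overline{Y}\in\mathcal{M}(n,0)$ so by Theorem \ref{pre:rect_dim} its rectifiable dimension is at most $n$.

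To bring in the rectifiable dimension of $Y$, I zoom in at a $k$-regular point. By Theorem \ref{pre:rect_dim} the set $\mathcal{R}_k(Y)$ is dense in $Y$; pick $z\in\mathcal{R}_k(Y)$ and a lift $\bar z\in\overline{Y}$. Along a subsequence $s_j\to\infty$ I obtain an equivariant tangent cone
$$(s_j\overline{Y},\bar z,G)\overset{GH}\longrightarrow(\overline{W},\bar w,G_\infty)$$
whose quotient is $(\mathbb{R}^k,0)$ by the $k$-regularity of $z$. Corollary \ref{cor:tan_topol_dim} again supplies a closed $\mathbb{R}^b\le G_\infty$, and a diagonal argument keeps $\overline{W}\in\mathcal{M}(n,0)$. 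The crux is to show $\overline{W}$ isometrically contains a Euclidean $\mathbb{R}^{k+b}$-factor. The $b$ orbit directions come from $\mathbb{R}^b\le G_\infty$: by calibrating the rescalings producing $G_\infty$, each one-parameter subgroup acts on $\bar w$ at unit speed, so its orbit through $\bar w$ is a geodesic line; iterating Cheeger--Colding splitting (Theorem \ref{pre:split}) --- which is possible because $G_\infty$ is abelian, being a limit of abelian groups --- yields $\overline{W}=\mathbb{R}^b\times\overline{W}'$ with $\mathbb{R}^b$ acting by translation. The remaining $k$ directions come from lifting the Euclidean quotient: $\overline{W}'$ carries a $G_\infty/\mathbb{R}^b$-action with quotient $\mathbb{R}^k$, and a parallel line-extraction splits off an $\mathbb{R}^k$-factor. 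Thus the rectifiable dimension of $\overline{W}$ is at least $k+b$, and combined with the upper bound $n$ from Theorem \ref{pre:rect_dim} one concludes $b_1(M)\le n-k$.

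\textbf{Main obstacle.} The heart of the argument is converting the topological-dimension lower bound supplied by Corollary \ref{cor:asy_topol_dim} into an honest isometric $\mathbb{R}^{k+b}$-factor inside $\overline{W}$. The orbit directions require the rescalings $s_j$ to be chosen so that limits of discrete generators of $\Lambda$ become unit-speed one-parameter subgroups of isometries, making their orbits through $\bar w$ into geodesic lines rather than merely topologically one-dimensional subsets. Abelianness of $G_\infty$ is then essential to keep the $b+k$ splitting directions mutually commuting, so that Cheeger--Colding splitting can be iterated without loss. Once these ingredients are in place the final dimension count is immediate.
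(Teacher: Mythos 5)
Your setup through the equivariant tangent-cone construction matches the paper's proof essentially verbatim: pass to the abelian cover $\overline{M}=\widetilde{M}/[\Gamma,\Gamma]$, extract a closed $\mathbb{Z}^b\le\Lambda$, apply Corollary \ref{cor:asy_topol_dim} at the asymptotic-cone scale and Corollary \ref{cor:tan_topol_dim} after rescaling at a $k$-regular point of $Z$, and obtain an equivariant tangent cone of the form $\mathbb{R}^k\times Q$ whose quotient is $\mathbb{R}^k$ and whose isometry group contains a closed $\mathbb{R}^b$. Where you diverge --- and where the gap lies --- is the attempt to upgrade this to an \emph{isometric} $\mathbb{R}^{k+b}$-factor by iterating Cheeger--Colding splitting along orbit directions. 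The step ``each one-parameter subgroup acts on $\bar w$ at unit speed, so its orbit through $\bar w$ is a geodesic line'' is not justified and is in general false: the orbit of a closed $\mathbb{R}$-subgroup of isometries through a point need not be a geodesic, even if parametrized by arc length (a one-parameter screw motion of $\mathbb{R}^3$ has helical orbits; more generally, an $\mathbb{R}$-action that mixes translation with rotation on a cross-section has non-geodesic orbits). Corollaries \ref{cor:asy_topol_dim} and \ref{cor:tan_topol_dim} only give you a closed $\mathbb{R}^b$-subgroup, hence a free orbit of topological dimension $\ge b$; they do not give lines, so there is nothing to feed into Theorem \ref{pre:split}, and the subsequent iteration and lifting of an $\mathbb{R}^k$-factor have no starting point.

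The paper's proof sidesteps this entirely and is weaker-by-design: since the orbit $H\cdot(0,\bar o)$ coincides with the fiber $\{0\}\times Q$ over the origin (the quotient map to $\mathbb{R}^k$ is the projection onto the split Euclidean factor) and has topological dimension $\ge b$, the tangent cone $\mathbb{R}^k\times Q$ has topological dimension $\ge k+b$; being a Ricci limit space from $n$-manifolds, it satisfies $\dimT\le\dimH\le n$, so $k+b\le n$. No geodesic orbits, no iterated splitting, and no detour through rectifiable dimension (which is anyway only known to be $\le$ the Hausdorff dimension, not to dominate the topological dimension). If you wish to salvage your plan, drop the splitting step and use the dimension chain directly: the topological-dimension bound you already extracted is exactly what is needed, and the stronger isometric-splitting claim is both unavailable and unnecessary.
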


Theorem \ref{thm:b1_rect_dim} directly implies the $b_1(M)$ estimate in Theorem \ref{thm:main_b1}. In fact, because the $Z$-factor in an asymptotic cone $\mathbb{R}^{k-1}\times Z$ of $M$ is not point, the rectifiable dimension of $\mathbb{R}^{k-1}\times Z$ is at least $k$. Thus $b_1(M)\le n-k$ by Theorem \ref{thm:b1_rect_dim}. We shall see that the proof of Theorem \ref{thm:b1_rect_dim} below also plays an important role in that of the rigidity part of Theorem \ref{thm:main_b1}.

\begin{proof}[Proof of Theorem \ref{thm:b1_rect_dim}]
   Below we write 
   $$b=b_1(M),\quad \Gamma=\pi_1(M,p),\quad \overline{M}=\widetilde{M}/[\Gamma,\Gamma],\quad \Lambda=\Gamma/[\Gamma,\Gamma].$$ Then $\mathrm{Isom}(\overline{M})$ contains $\mathbb{Z}^{b}\leq \Lambda$ as a closed subgroup. By assumption, we have a sequence $r_i\to\infty$ and the convergence
   \begin{equation}\label{cd:rect_asym}
   \begin{CD}(r_i^{-1} \overline{M},\bar{p},\Lambda) @>GH>> (Y,y,G) \\
	@VV\pi V @VV \pi V\\
	(r_i^{-1} M,p) @>GH>> (Z,z),
   \end{CD}
   \end{equation}
   where $Z$ has rectifiable dimension $k$. By Corollary \ref{cor:asy_topol_dim}, $G$ contains a closed subgroup $\mathbb{R}^{b}$. Let ${q}$ be a regular point in $Z$ and let $\bar{q}$ be a lift of $q$ in $Y$. For any sequence $s_i\to\infty$, let us consider
   \begin{equation}\label{cd:rect_tan}
   \begin{CD}
      (s_i Y,\bar{q},G) @>GH>> (\mathbb{R}^k \times Q, (0,\bar{o}), H) \\
      @VV\pi V @VV \pi V \\
      (s_i Z,q) @>GH>> (\mathbb{R}^k,0).
   \end{CD}
   \end{equation}
   By construction and Corollary \ref{cor:tan_topol_dim}, $H$ contains a closed subgroup $\mathbb{R}^{b}$ and the orbit $H\cdot (0,\bar{o})$ is identical to $\{0\}\times Q$. Therefore,  $\mathbb{R}^k\times Q$ has topological dimension at least $k+b$. 
Note that since $\mathbb{R}^k\times Q$ is a tangent cone of $Y$, it is a Ricci limit space coming from $n$-manifolds. So we have
\begin{equation}\label{eq:b1_dim_est}
k+b\leq \dimT(\mathbb{R}^k\times Q)\leq \dimH(\mathbb{R}^k\times Q)\leq n,
\end{equation}
where $\dimH$ and $\dimT$ mean the Hausdorff and topological dimension, respectively. This proves $b_1(M)\leq n-k$.

\end{proof}

\begin{proof}[Proof of Theorem \ref{thm:main_b1}]
   Below we continue to use the notations in the proof of Theorem \ref{thm:b1_rect_dim}. Suppose that $M$ has an asymptotic cone $Z$ that properly contains an $\mathbb{R}^{k-1}$, then in (\ref{cd:rect_asym}), we have
   $$Z=\mathbb{R}^{k-1}\times Z'=Y/G,\quad Y=\mathbb{R}^{k-1}\times Y',$$
   where $Z'$ is not a point and thus $Z$ has rectifiable dimension at least $k$. With Theorem \ref{thm:b1_rect_dim}, it remains to prove the rigidity part when $b=n-k$. 

   From the inequality (\ref{eq:b1_dim_est}), $Y$ has a tangent cone at some point with Hausdorff dimension at least $k+b=n$. We conclude that $Y$ is a noncollapsing Ricci limit space and thus the sequence $r_i^{-1}\overline{M}$ is noncollapsing by Theorem \ref{pre:Hdim_gap}. Hence $\overline{M}$ has Euclidean volume growth. By Theorem \ref{pre:vol_cone}, the asymptotic cone $Y=\mathbb{R}^{k-1}\times Y'$ is a metric cone of dimension $n$, written as $\mathbb{R}^{k-1}\times C(X)$; moreover, the base point $y$ in (\ref{cd:rect_asym}) is a vertex $(0,v)$. 
   
   Because the $\mathbb{R}^{k-1}$-factor in $Y=\mathbb{R}^{k-1} \times C(X)$ is preserved under the quotient map by $G$-action, $G$-acts trivially on the $\mathbb{R}^{k-1}$-factor and the orbit $Gy$ is contained in the $C(X)$-factor. As $G$ contains a closed subgroup $\mathbb{R}^b$, where $b=n-k$, we see from Corollary \ref{pre:cone_orb_dim} that the Euclidean factor in $Y=\mathbb{R}^{k-1} \times C(X)$ has dimension at least $(k-1)+(n-k)=n-1$. By Theorem \ref{pre:flat}, $\overline{M}$ is isometric to $\mathbb{R}^n$ and $M$ is flat. 
   
   Lastly, because $M$ is flat and the asymptotic cone of $M$ properly contains an $\mathbb{R}^{k-1}$, $M$ is isometric to the metric product $ \mathbb{R}^{k-1}\times N^{n-k+1}$ for some open flat manifold $N$ with $b_1(N)=n-k$. Let $S$ be the soul of $N$. $S$ has dimension at most $n-k$ and $b_1(S)=b_1(N)=n-k$. Therefore, $S$ is a flat torus $\mathbb{T}^{n-k}$.
   By Proposition \ref{open flat}(2), $N$ is either isometric to $\mathbb{R}\times \mathbb{T}^{n-k}$ or diffeomorphic to $\mathbb{M}^2\times \mathbb{T}^{n-k-1}$. 
\end{proof}

Next, we prove Theorem \ref{thm:codim2}.

\begin{proof}[Proof of Theorem \ref{thm:codim2}]
   If $M$ has an asymptotic cone that properly contains a line, then the result follows directly from Theorem \ref{thm:main_b1}. It remains to consider the case that $M$ has an asymptotic cone isometric to a line $\mathbb{R}$.

   Let $r_i\to \infty$ be a sequence such that we have convergence
   $$(r_i^{-1} M,p)\overset{GH}\longrightarrow (\mathbb{R},0).$$
   Let us consider two points $y=1$ and $z=-1$ in the limit line $\mathbb{R}$. Then along the sequence $r_i^{-1} M$, we can choose points $y_i,z_i\in r_i^{-1}M$ converging to $y$ and $z$, respectively. By construction,
   $$r_i^{-1} d_M(y_i,z_i)\to 2.$$
   For each $i$, we join $y_i$ and $z_i$ by a minimal geodesic $c_i$. 
   
   If $d_M(c_i,p)<C<\infty$ for some subsequence, then $c_i$ subconverges to a line in $M$. By Cheeger-Gromoll splitting theorem \cite{CG_split}, $M$ splits isometrically as $\mathbb{R}\times K^{n-1}$, where $K$ is a complete $(n-1)$-manifold with $\mathrm{Ric}_K\ge 0$ and $b_1(K)=n-2$. Because $K$ has an asymptotic cone as a single point, we conclude that $K$ must be a closed manifold. Then by Proposition \ref{pre:n-1_flat}, $K$ is flat. 

   If $d_M(c_i,p)\to \infty$ for some subsequence, we set $d_i=d_M(c_i,p)$. Note that $d_i\ll r_i$ since $c_i$ converges to a segment of length $2$ through $0$ when scaling down by $r_i$.
   Now let us consider the convergence $$(d_i^{-1}M,p,c_i)\overset{GH}\longrightarrow (Y,y,c_\infty).$$
   By construction and $d_i\ll r_i$, $c_\infty$ is a line in $Y$ with distance $1$ to $y$. In other words, $M$ has an asymptotic cone $Y$ that properly contains a line. Thus the result follows from Theorem \ref{thm:main_b1} in this case. 
\end{proof}

\section{Orbit growth order and rigidity}
 
In this section, we prove Theorem \ref{thm:geo_rigid}, the orbit growth order rigidity. We always write $\Gamma=\pi_1(M,p)$, which acts isometrically on the Riemannian universal cover $(\widetilde{M},\tilde{p})$. Let $R>0$, we put
$$\Gamma(R)=\{g\in \Gamma| d(\tilde{p},g\tilde{p})\le R\}.$$
For readers' convenience, we recall the notion of stable orbit growth, which was mentioned in the introduction.
\begin{defn}
We say that $\Gamma$ has stable orbit growth of order $k$, if there are constants $0<c_1<c_2$ such that
$$c_1 R^k\leq \#\Gamma(R)\leq c_2R^k$$ 
for all $R\geq 1.$
\end{defn}

The following proposition gives the Hausdorff dimension of limit orbit at the base point exactly as the orbit growth order. We remark that the Hausdorff dimension of the limit orbit at other points could be different.

\begin{prop}\label{prop:stable_dimH}
  Suppose that $\Gamma$ has stable orbit growth of order $k$, where $k\ge 1$. Then for every asymptotic cone $(Y,y,G)$ of $(\widetilde{M},\tilde{p},\Gamma)$, the orbit $G\cdot y$ has Hausdorff dimension $k$.
\end{prop}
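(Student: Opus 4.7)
The plan is to prove matching upper and lower bounds for $\dimH(Gy)$. Both halves rest on a packing/covering translation between the orbit $\Gamma\tilde{p}\subset\widetilde{M}$ and its rescaled limit $Gy\subset Y$, combined with the stable growth bound $c_1 R^k\le \#\Gamma(R)\le c_2 R^k$.

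For the upper bound $\dimH(Gy)\le k$, I would fix $R>0$ and let $N(\epsilon,R)$ denote the maximum number of $\epsilon$-separated points in $Gy\cap \overline{B_R(y)}$. The equivariant Gromov--Hausdorff convergence $(r_i^{-1}\widetilde{M},\tilde{p},\Gamma)\to(Y,y,G)$ produces orbit points $g_1\tilde{p},\ldots,g_{N(\epsilon,R)}\tilde{p}\in B_{(R+o(1))r_i}(\tilde{p})$ that are pairwise $(\epsilon-o(1))r_i$-separated in $\widetilde{M}$. The balls $B_{\epsilon r_i/3}(g_j\tilde{p})$ are then disjoint, each contains $\#\Gamma(\epsilon r_i/3)$ orbit points by $\Gamma$-equivariance of the action, and all these orbit points lie in $B_{(R+\epsilon)r_i}(\tilde{p})$. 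The stable growth bounds then give
\[
N(\epsilon,R)\cdot c_1(\epsilon r_i/3)^k \,\le\, \#\Gamma\bigl((R+\epsilon)r_i\bigr)\,\le\, c_2\bigl((R+\epsilon)r_i\bigr)^k,
\]
hence $N(\epsilon,R)\le C(R+\epsilon)^k/\epsilon^k$. This bounds the upper box dimension of $Gy\cap \overline{B_R(y)}$ by $k$, and letting $R$ be arbitrary yields $\dimH(Gy)\le k$.

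For the lower bound $\dimH(Gy)\ge k$, I would build a Frostman-type measure on $Gy$ as a weak limit of the rescaled counting measures
\[
\mu_i \;=\; r_i^{-k}\sum_{g\in\Gamma}\delta_{g\tilde{p}}
\]
on $(r_i^{-1}\widetilde{M},\tilde{p})$. Stable growth together with $\Gamma$-equivariance gives, for every orbit point $g\tilde{p}$ and every $\rho\ge r_i^{-1}$,
\[
c_1\rho^k \,\le\, \mu_i\bigl(\overline{B_\rho}(g\tilde{p})\bigr) \,\le\, c_2\rho^k.
\]
After passing to a subsequence, $\mu_i$ converges weakly to a Radon measure $\mu$ on $Y$; the lower bound at $\tilde{p}$ persists, so $\mu\ne 0$; the support of $\mu$ lies in $Gy$ because $\Gamma\tilde{p}$ Hausdorff-approximates $Gy$ on bounded sets; and the upper bound transfers from orbit points to every $x\in Gy$ by approximation, yielding $\mu(B_\rho(x))\le C\rho^k$ for all such $x$ and $\rho>0$. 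The classical mass distribution principle then forces $\mathcal{H}^k(Gy)>0$, and hence $\dimH(Gy)\ge k$.

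The main technical point I anticipate is the transfer of the upper density bound from balls centered at orbit points (where it is immediate from the $\mu_i$-bound) to balls centered at arbitrary $x\in Gy$; this is a routine approximation using the uniformity of the $\mu_i$-bound and the definition of equivariant Gromov--Hausdorff convergence. Once this step is verified, the two dimension estimates combine to give $\dimH(Gy)=k$.
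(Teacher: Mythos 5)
Your proposal is correct. Both halves rest on the same core estimate the paper establishes in Lemma~\ref{lem:cont_est}: stable orbit growth forces a two-sided bound on the number of $\lambda$-separated orbit points in a ball of radius $R$, uniformly of order $(R/\lambda)^k$. Your upper-bound argument is precisely this packing estimate pulled back to $\widetilde{M}$ via the equivariant GH approximation, and it is essentially identical to the paper's use of the upper capacity bound in (\ref{eq:cont_lim}) to cover $B^\infty_R(y)$ by boundedly many $\delta$-balls. (The paper actually extracts the slightly stronger conclusion $\mathcal{H}^k(B^\infty_R(y))<\infty$, whereas you pass through upper box dimension; for the dimension statement these are equivalent.)

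For the lower bound you take a genuinely different route. The paper runs a \emph{measure-free} mass distribution argument: given any cover of $B^\infty_R(y)$ by balls of radius $t_j\le\delta$, it bounds $\sum t_j^k$ from below by comparing the capacity of the union with the capacity of each piece, all at a single small scale $\lambda$. You instead build an explicit Frostman measure $\mu$ as a vague limit of the rescaled orbit counting measures $r_i^{-k}\sum_{g\in\Gamma}\delta_{g\tilde{p}}$, verify the two-sided Ahlfors-type density bound on it, and invoke the classical mass distribution principle. This is a clean and arguably more conceptual alternative; the price is that you need the routine but not-entirely-free facts about vague convergence of measures along a GH sequence (the portmanteau-type inequalities for open and compact sets under GH approximations, and the transfer of the density bound from orbit points to arbitrary points of $Gy$, which you correctly identify as the step requiring care). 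The paper's version avoids constructing a limit measure and stays purely combinatorial. Both proofs are valid, and both hinge on the same uniform packing estimate; you should just make sure, when writing the details, that the density upper bound $\mu(B_\rho(x))\le c_2\rho^k$ is established for all $x$ in the support of $\mu$ (not merely at limits of the base orbit point), and that you note $Gy$ is closed so the support of $\mu$ is indeed contained in it.
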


Before proving Proposition \ref{prop:stable_dimH}, we introduce some notations and prove a lemma. For a metric space $(X,d)$, a bounded subset $A\subseteq X$, and $\epsilon>0$, we define the capacity
\begin{align*}
	\text{Cap}(A;\epsilon)=\sup\{\ k\mid &\text{ there are $k$ points $x_1,\cdots,x_k\in A$} \\
	&\text{ such that $d(x_i,x_j)\ge \epsilon$ for all $i\neq j$.} \}
\end{align*} 
Let $d$ be the distance on $\widetilde{M}$ and let $r_i\to \infty$ be a sequence. 
We write $\Gamma_i\tilde{p}$ as the orbit $\Gamma\tilde{p}$ equipped with the distance $d_i(g_1\tilde{p},g_2 \tilde{p}):=r_i^{-1}d(g_1\tilde{p},g_2\tilde{p})$. For a point $x\in \Gamma_i\cdot\tilde{p}$ and $r>0$, we write
$$B_r^i(x)=\{y\in  \Gamma_i\tilde{p}\mid  d_i(x,y)\le  r\}$$
as the $r$-ball centered at $x$ in $\Gamma_i\tilde{p}$. Note that $\# B^i_r(x)=\# B^i_r(\tilde{p})$ for all $x\in \Gamma_i\tilde{p}$ because $\Gamma$-action is isometric.

\begin{lem}\label{lem:cont_est}
    Suppose that $\Gamma$ has stable orbit growth of order $k$, where $k\ge 1$. Then for every $i\in \mathbb{N}_+$ and $0<\lambda<R$, it holds that
	\begin{equation}\label{eqkey}
		\frac{c_1}{c_2}\left(\frac{R}{\lambda}\right)^k \leq \mathrm{Cap}(B^i_R(\tilde{p});\lambda)\leq  \frac{3^{k+1}c_2}{c_1} \left(\frac{R}{\lambda}\right)^k.
	\end{equation}	
\end{lem}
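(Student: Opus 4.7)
The plan is to reduce the capacity estimate to the stable growth hypothesis via a standard packing/covering dichotomy, carefully tracking the fact that $\Gamma$ acts freely and isometrically so that orbit balls of a fixed radius have a center-independent cardinality.

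First I would record the key identity: under the rescaled metric $d_i = r_i^{-1}d$, the ball $B^i_r(g\tilde p)$ contains exactly $\#\Gamma(r_i r)$ orbit points, independently of $g\in\Gamma$, because $\Gamma$ acts by isometries. Combined with the stable growth hypothesis applied at scale $r_i r$ (which is $\ge 1$ for $i$ large, and otherwise harmless after adjusting constants if needed, using $r\ge\lambda$), this gives
\begin{equation*}
c_1(r_i r)^k \;\le\; \#B^i_r(g\tilde p)\;\le\; c_2(r_i r)^k.
\end{equation*}

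For the upper bound, I would take a maximal $\lambda$-separated family $x_1,\dots,x_N\in B^i_R(\tilde p)$ realizing the capacity. Then the balls $B^i_{\lambda/2}(x_j)$ are pairwise disjoint and sit inside $B^i_{R+\lambda/2}(\tilde p)$, so counting orbit points yields
\begin{equation*}
N\cdot\#\Gamma(r_i\lambda/2)\;\le\;\#\Gamma(r_i(R+\lambda/2)).
\end{equation*}
Applying the two-sided stable growth bound and using $R+\lambda/2\le 3R/2$ (since $\lambda<R$) gives $N\le (c_2/c_1)(3R/\lambda)^k$, well within the claimed upper bound $3^{k+1}(c_2/c_1)(R/\lambda)^k$.

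For the lower bound, I would use the standard fact that a maximal $\lambda$-separated set is automatically a $\lambda$-net: the balls $B^i_\lambda(x_j)$ cover $B^i_R(\tilde p)$. Hence
\begin{equation*}
N\cdot\#\Gamma(r_i\lambda)\;\ge\;\#\Gamma(r_i R),
\end{equation*}
and another application of stable growth delivers $N\ge (c_1/c_2)(R/\lambda)^k$.

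There is no genuine obstacle here; the only mild subtlety is that stable growth is assumed for $R\ge 1$, so one needs to check that the scales $r_i R$, $r_i\lambda$, $r_i(R+\lambda/2)$ are eventually $\ge 1$ (or else absorb the finitely many bad scales into the constants). Both bounds are then essentially the Bishop--Gromov-style packing/covering comparison transplanted to the discrete orbit, the only ingredient being the isometric freeness of the $\Gamma$-action.
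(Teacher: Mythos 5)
Your proposal follows essentially the same packing/covering argument as the paper: apply stable growth to orbit balls (whose cardinality is center-independent by isometric freeness), pack disjoint small balls around a maximal $\lambda$-separated set for the upper bound, and use the net property of a maximal separated set for the lower bound. One small slip: since $B^i_r$ is defined with $\le$ (closed balls), balls of radius exactly $\lambda/2$ centered at points with $d_i(x_a,x_b)=\lambda$ can share a midpoint orbit point, so they need not be disjoint; use radius $\lambda/3$ as the paper does (or any radius strictly below $\lambda/2$), after which the estimate goes through unchanged up to the value of the constant.
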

\begin{proof}
    The orbit growth condition implies
     \begin{equation}\label{eqtrivial}
		c_1(Rr_i)^k\leq \#B^i_R(\tilde{p}) \leq  c_2(Rr_i)^k.
     \end{equation}	
    We claim the following inequalities:
	\begin{equation}\label{eq:cont_claim}
		\text{Cap}(B^i_{R-\lambda/3}(\tilde{p});\lambda)\cdot \#B^i_{\lambda/3}(\tilde{p})\leq \#B^i_R(\tilde{p}) 
	\leq \text{Cap}(B^i_R(\tilde{p});\lambda)\cdot \#B^i_\lambda(\tilde{p}).
	\end{equation}
	Indeed, given any collection of points $x_1,\cdots,x_k\in B^i_{R-\lambda/3}(\tilde{p})$ such that $d_i(x_a,x_b)\geq \lambda$ for any $x_a\neq x_b$, we have disjoint union $$\bigsqcup\limits_{i=1}^k B^i_{\lambda/3}(x_i)\subseteq  B^i_R(\tilde{p}).$$ 
 This implies that $k\cdot\#B^i_{\lambda/3}(\tilde{p})\leq  \#B^i_R(\tilde{p})$, thus the first inequality in (\ref{eq:cont_claim}). To prove the other one, we fix $y_1,\cdots, y_s\in B^i_R(\tilde{p})$ such that 
 $$ s=\mathrm{Cap}(B^i_R(\tilde{p});\lambda),\quad  d_i(y_a,y_b)\geq \lambda \text{ for any } y_a\neq y_b.$$ 
 Then we have $$\bigcup\limits_{i=1}^s B^i_\lambda(y_i)\supseteq  B^i_R(\tilde{p}).$$ 
 Hence $s\cdot\#B^i_\lambda(\tilde{p})\geq \#B^i_R(\tilde{p})$, the second inequality in (\ref{eq:cont_claim}).
	
  The desired inequality (\ref{eqkey}) follows from (\ref{eqtrivial}) and (\ref{eq:cont_claim}).
\end{proof}

We recall the definition of (spherical) Hausdorff measure. For a metric space $X$, a real number $k\geq 0$, and a  $\delta>0$,  we have 
$$\mathcal{H}^k_{\delta}(X)=\inf\left\{\sum\limits_{i=1}^\infty r_i^k\mid X\subseteq \bigcup\limits_{i=1}^\infty B_{r_i}(x_i), \text{ where } r_i\leq \delta       \right\}.$$
If $X$ is compact, then equivalently we can use finite covers to define $\mathcal{H}^k_\delta$. The $k$-dimensional (spherical) Hausdorff measure of $X$ is given by $\mathcal{H}^k(X):= \lim\limits_{\delta\to 0} \mathcal{H}^k_{\delta}(X)$. The Hausdorff dimension of $X$ is defined by
$$\dimH(X)=\inf\{s\ge 0\mid \mathcal{H}^s(X)=0\}=\sup\{s\ge 0\mid  \mathcal{H}^s(X)>0\}, $$
where we use the convention $\inf\{\emptyset\}=\infty$. 

\begin{proof}[Proof of Proposition \ref{prop:stable_dimH}]
    Since we have pointed Gromov-Hausdorff convergence $$(r_i^{-1}\widetilde{M},\tilde{p},\Gamma_i\tilde{p})\overset{GH}\longrightarrow (Y,y,Gy),$$ the same inequality in the form of (\ref{eqkey}) holds for $G\cdot y$. More precisely, if we write 
    $$B^\infty_R(z)=\{w\in  Gy\mid  d_Y(z,w)\le R\},$$ 
    where $z\in Gy$ and $R>0$, then because $B^\infty_R(z)$ is the GH limit of $B^i_R(z_i)$ for some $z_i\in \Gamma_i\tilde{p}$, we have:
\begin{equation}\label{eq:cont_lim}
	\frac{c_1}{c_2} \left(\dfrac{R}{\lambda}\right)^k\leq \text{Cap}(B^\infty_R(z);\lambda)\leq  \frac{3^{k+1}c_2}{c_1} \left(\dfrac{R}{\lambda}\right)^k.
\end{equation}	
for every $0<\lambda<R$.

The inequality (\ref{eq:cont_lim}) shows that the limit orbit $Gy$ has box dimension $k$. Below, we show that $\mathcal{H}^k$ is indeed locally finite on $Gy$.

Let $\delta>0$. If $l=\text{Cap}(B^\infty_R(y);\delta)$, then by definition there are $x_1,\cdots,x_l\in B^\infty_R(y)$ such that $$B^\infty_R(y)\subseteq \bigcup\limits_{j=1}^l B_{\delta}(x_j).$$ 
Thus by definition of $\mathcal{H}^k_\delta$ and by inequality (\ref{eq:cont_lim}), we have estimate
$$\mathcal{H}^k_\delta (B^\infty_R(y))\leq l\cdot\delta^k\leq 
\frac{3^{k+1}c_2}{c_1}R^k.$$ This shows $\mathcal{H}^k(B^\infty_R(y))\leq \frac{3^{k+1}c_2}{c_1}R^k$ since $\delta$ is arbitrary.

It remains to obtain a lower bound of $\mathcal{H}^k_\delta (B^\infty_R(y))$. Because $B^\infty_R(y)$ is compact, let us consider a finite collection of balls 
$\{B^\infty_{t_1}(x_1),\cdots, B^\infty_{t_m}(x_m)\}$ such that 
$$x_j\in B^\infty_R(y),\quad t_j\le \delta,\quad B^\infty_R(y)\subseteq \bigcup\limits_{j=1}^m B^\infty_{t_j}(x_j).$$ We choose a small number $0<\lambda<\min\{t_1,\cdots,t_m\}$, then by (\ref{eq:cont_lim})
\begin{align*}
    \sum_{j=1}^{m}t_j^k&\geq \left(\sum_{j=1}^{m} \text{Cap}(B^\infty_{t_j}(x_j);\lambda)\right)\frac{c_1}{3^{k+1}c_2}\lambda^k\\
	&\geq \frac{c_1\lambda^k}{3^{k+1}c_2}\text{Cap}(B^\infty_R(y);\lambda) \\
	&\geq \frac{c^2_1}{3^{k+1}c^2_2}R^k.
\end{align*}
Thus $\mathcal{H}^k(B_R^\infty (y))\geq\mathcal{H}_\delta^k(B_R^\infty (y))\geq \frac{c^2_1}{3^{k+1}c^2_2}R^k$. 
\end{proof}

We are ready to prove Theorem \ref{thm:geo_rigid}.

\begin{proof}[Proof of Theorem \ref{thm:geo_rigid}]
Let $r_i\to\infty$ be a sequence such that the limit of $(r_i^{-1}M,p)$ contains an $\mathbb{R}^k$ factor. Passing to a subsequence, we have equivariant GH convergence: 
$$\begin{CD}
	(r_i^{-1}\widetilde{M},\tilde{p},\Gamma) @>GH>> (Y=\mathbb{R}^k\times Y',y,G)\\
    @VV \pi_i V @VV \pi V\\
	(r_i^{-1}M,p) @>GH>> (Z=(\mathbb{R}^k\times Z')=Y/G,z)
 \end{CD}$$	
Because the $\mathbb{R}^k$-factor in $Y$ is preserved under the quotient map by $G$-action, we have $G\cdot y\subseteq \{0^k\}\times Y'$.

By the assumption that $\Gamma$ has stable orbit growth of order $l$ and Proposition \ref{prop:stable_dimH}, it holds that
$$l=\dimH(Gy)\leq \dimH(Y').$$
Hence the Hausdorff dimension of $Y$ satisfies
$$n \ge \dimH(Y)=k+ \dimH(Y')\ge k+l.$$
This proves the estimate $l\le n-k$.

Next, assuming $l>n-k-1$, we prove that $\widetilde{M}$ has Euclidean volume growth. This is an immediate consequence of the Hausdorff dimension estimate above and Theorem \ref{pre:Hdim_gap}. In fact,
$$\dimH(Y)\ge k+l>k+(n-k-1)=n-1.$$
By Theorem \ref{pre:Hdim_gap}, $\dimH(Y)=n$ and thus $\widetilde{M}$ has Euclidean volume growth. 

To prove the flatness part in both (1) and (2) of Theorem \ref{thm:geo_rigid}, it suffices to prove the Claim below.

\textbf{Claim:} If $\widetilde{M}$ has Euclidean volume growth and $l>n-k-2$, then $M$ is flat and thus isometric $\mathbb{R}^k\times N^{n-k}$. In fact, since $\widetilde{M}$ has Euclidean volume growth, by Theorem \ref{pre:vol_cone}, $Y$ is a metric cone with vertex $y$. We further write 
$$Y=\mathbb{R}^k\times Y'= \mathbb{R}^k \times (\mathbb{R}^m \times C(X))=\mathbb{R}^{k+m}\times C(X)$$
and $y=(0^k,0^m,v)$, where $C(X)$ does not contain any lines and $v$ is the unique vertex of $C(X)$. By Proposition \ref{pre:isom_split}, we have inclusion
$$Gy\subseteq \mathbb{R}^{k+m}\times \{v\}.$$
Together with $Gy\subseteq \{0^k\}\times Y'$, we derive
$$Gy \subseteq \{0^k\}\times \mathbb{R}^m\times \{v\}.$$
As $Gy$ has Hausdorff dimension $l$, the Euclidean factor of $Y$ has dimension
$$m+k\ge \dimH(Gy)+k =l+k >(n-k-2)+k=n-2.$$
Thus $Y$ splits off an $\mathbb{R}^{n-1}$-factor. By Theorem \ref{pre:flat}, $\widetilde{M}$ is isometric to $\mathbb{R}^n$. This proves the Claim.

Now we prove the rigidity part in Theorem \ref{thm:geo_rigid}(1). By the Claim above, $M$ is isometric to $\mathbb{R}^k\times N^{n-k}$ for some flat manifold $N$. Note that orbit growth order $l$ is an integer since $M$ is flat, so $l=n-k$. Therefore, $N$ is closed; otherwise, we would have $l\leq n-k-1$ by Proposition \ref{open flat}(1).

Finally, we prove Theorem \ref{thm:geo_rigid}(2). Again by the Claim, if $l>n-k-2$, then $M$ is isometric to $\mathbb{R}^k\times N^{n-k}$ for some flat manifold $N$; moreover, $l$ is an integer. If $l=n-k-1$, then $N$ is an open flat manifold with an $n-k-1$ dimensional soul by Proposition \ref{open flat}(1). If $l=n-k$, then $N$ is closed as discussed above. 
\end{proof}

\section{Examples of $\mathbb{Z}$-actions with varying orbit growth order}

In the work by Pan-Wei \cite{PanWei}, the first examples of Ricci limit spaces with non-integer Hausdorff dimension have been constructed. These examples are the asymptotic cones of the Riemannian universal covers of doubly warped products
\begin{equation}\label{eq:warp}
M=[0,\infty)\times_f S^k \times_h S^1,\quad g=dr^2+f(r)^2ds_k^2 + h(r)^2ds_1^2.
\end{equation}
with
\begin{equation}\label{eq:warp_f}
f'(0)=1,\quad f^{(\text{even})}(0)=0,\quad 0<f'<1,\quad f''<0,
\end{equation}
\begin{equation}\label{eq:warp_h}
h(0)>0,\quad h^{(\text{odd})}(0)=0,\quad h'<0.
\end{equation}
As base manifolds, they were first constructed by Nabonnand \cite{nabonnand} (also see \cite{Bergery86,Wei88}). Since our construction is based on these examples, we first have a brief review of them. The main reference is \cite{PanWei}.

Given $\alpha>0$, we use
$$f(r)=r(1+r^2)^{-\frac{1}{4}},\quad h(r)=(1+r^2)^{-\alpha}.$$
Then
$g_\alpha$ in the form of (\ref{eq:warp}) is a doubly warped product metric on $S^{k+1}\times S^1$. $g_\alpha$ has positive Ricci curvature when $k\geq K(\alpha):=\max\{4\alpha+2,16\alpha^2+8\alpha \}$.

Let $\pi:(\widetilde{M},\Tilde{g}_\alpha)\rightarrow (M,g_\alpha) $ be the Riemannian universal cover, $p\in M$ be a point at $r=0$, $\Tilde{p}$ be a lift of $p$ on $\widetilde{M}$, and $\gamma$ be a generator of $\Gamma:=\pi_1(M,p)\cong\mathbb{Z}$. According to \cite[Lemma 1.1]{PanWei}, the following estimate holds for all $l\geq 9^{1+\frac{1}{2\alpha}}$: 
\begin{equation}\label{eq:length_est}
     C\cdot l^{\frac{1}{1+2\alpha}}-2\leq d(\gamma^l \Tilde{p},\Tilde{p})\leq 9\cdot l^{\frac{1}{1+2\alpha}},
\end{equation}
where $C=2\cdot 9^{-\frac{1}{2\alpha}}$. It is straightforward to deduce from (\ref{eq:length_est}) that the $\Gamma$-action on $\widetilde{M}$ has stable orbit growth of order $1+2\alpha$. Moreover, for any $r_i\to\infty$, passing to subsequence we have the equivariant GH convergence:
\begin{equation}\label{CD:exmp}
\begin{CD}
	(r_i^{-1}\widetilde{M},\Tilde{p},\Gamma) @>GH>> (Y,y,G)\\
    @VV \pi V @VV \pi V\\
	(r_i^{-1}M,p) @>GH>> (X,x)
 \end{CD}
 \end{equation}
 It follows from (\ref{eq:length_est}) that $Gy$ has Hausdorff dimension $1+2\alpha$. Since any points in $Y\backslash Gy$ are $2$-regular, one sees that $\text{dim}_{\mathcal{H}}(Y)=1+2\alpha$ for any $\alpha\geq 1/2$; see \cite{PanWei} for details. The limit space $(Y,y)$ is indeed the $(2\alpha)$-Grushin halfplane, which is an almost Riemannian metric $dr^2+r^{-4\alpha}dv^2$ on the halfplane; see \cite[Remark 3.9]{DHPW}.

 \begin{rem}
 The above $X=[0,\infty)$ is a ray, thus $\text{dim}_{\mathcal{H}}(X)=1$. Then in the diagram (\ref{CD:exmp}), we have
 $$\dimH(Y)=\max\{2,1+2\alpha\}< 2+2\alpha=\dimH(X)+\dimH(Gy).$$
 In particular, the Hausdorff dimension may not work well with the quotient map.
 \end{rem}

We shall use the manifold $(M,g_\alpha)$ above as a model. For $0<\alpha<\beta$, we would like to have the pieces of $(M,g_{\alpha})$ and the pieces of $(M,g_{\beta})$ appear in turn indefinitely as $r\to\infty$. In some suitable scales $r_i\to \infty$, one can only see one of the models, while in another suitable scales $s_i\to \infty$, one can only see the other one. Then it is expected that $\Gamma$-action on the universal cover has orbit growth of order $1+2\alpha$ in some scales, while $1+2\beta$ in some other scales; in particular, $\Gamma$ will not have stable orbit growth. 

Now we start the formal construction of the example for Theorem \ref{thm:exmp}.

As indicated, the example is a doubly warped product (\ref{eq:warp}). We intend to use the same
$$f(r)=r(1+r^2)^{-\frac{1}{4}},$$
as in \cite{PanWei,Wei88}, but the decreasing function $h$ is piecewisely defined and oscillates between
$$h_1(r)=(1+r^2)^{-\alpha},\quad h_2(r)=(1+r^2)^{-\beta},$$
where $\beta>\alpha>0$. The Ricci curvature terms that need checking are
\begin{equation}\label{eq:Ric}
\Ric(\partial_r,\partial_r)=-\dfrac{h''}{h}-k \cdot\dfrac{f''}{f},\quad \Ric(Y,Y)=-\dfrac{h''}{h}-k \cdot\dfrac{f'h'}{fh},
\end{equation}
where $Y$ is a unit vector tangent to $S^1$. The Ricci curvature in the direction of $S^k$ is always positive due to (\ref{eq:warp_f}) and (\ref{eq:warp_h}).

The construction of $h$ will be done in two steps. In the first step, we construct a continuous and piecewise smooth $h$. Then in the second step, we smooth $h$ near the broken points. In both steps, we will further enlarge $k$, the dimension of the sphere factor in the doubly warped product (\ref{eq:warp}), to ensure positive Ricci curvature.

\subsection{Step 1.} We first define a piecewise function $h$ without smoothing it. For convenience, we set
$$g_1(r)=(1+r^2)^{\alpha},\quad g_2(r)=(1+r^2)^\beta.$$
Let us choose $A$ and $B$ such that
$$B>\beta>\alpha>A>0.$$
We set scales 
$$0=R_{1,0}<R_{1,1}=100<R_{1,2}<R_{1,3}<R_{1,4},$$
where $R_{1,2}$, $R_{1,3}$ and $R_{1,4}$ will be determined later. On $[0,R_{1,1}]$, we use $h=h_1$. On $[R_{1,2},R_{1,3}]$, we use $h=h_2$. On $[R_{1,1},R_{1,2}]$, we use a function $h_{1,+}=1/g_{1,+}$ to bridge, where
$$g_{1,+}(r)= C_{1,+} \cdot (1+r^2)^B.$$
The constant $C_{1,+}$ is a small number so that $g_{1,+}$ intersects $g_1$ and $g_2$ at $R_{1,1}$ and $R_{1,2}$, respectively; in other words, we require
$$C_{1,+}(1+R^2_{1,1})^B=(1+R^2_{1,1})^\alpha,\quad C_{1,+}(1+R^2_{1,2})^B= (1+R^2_{1,2})^\beta.$$
This implies that 
$$C_{1,+}=(1+R^2_{1,1})^{\alpha-B}, \quad R_{1,2}=((1+R^2_{1,1})^{\frac{B-\alpha}{B-\beta}}-1)^{\frac{1}{2}}. $$
Next, on $[R_{1,3},R_{1,4}]$, we use a function $h_{1,-}=1/g_{1,-}$, where 
$$g_{1,-}(r)= C_{1,-} \cdot (1+r^2)^A.$$
$C_{1,-}$ is a large number so that $g_{1,-}$ intersects $g_2$ and $g_1$ at $R_{1,3}$ and $R_{1,4}$, respectively. This requires
$$(1+R^2_{1,3})^\beta= C_{1,-}(1+R^2_{1,3})^A,\quad C_{1,-}(1+R^2_{1,4})^A= (1+R^2_{1,4})^\alpha.$$
Hence 
$$C_{1,-}=(1+R_{1,3}^2)^{\beta-A},\quad R_{1,4}=((1+R_{1,3}^2)^{\frac{\beta-A}{\alpha-A}}-1)^{\frac{1}{2}}.$$

Then inductively, we set scales 
$$R_{i+1,0}=R_{i,4}<R_{i+1,1}<R_{i+1,2}<R_{i+1,3}<R_{i+1,4}$$
and repeat the above construction. More specifically, we define 
$$
h(r) = 
\begin{cases}
h_1(r), & \text{on } [R_{i,0},R_{i,1}]; \\
h_{i,+}(r), &\text{on } [R_{i,1},R_{i,2}]; \\
h_2(r),&\text{on } [R_{i,2},R_{i,3}]; \\
h_{i,-}(r), &\text{on } [R_{i,3},R_{i,4}]. 
\end{cases}
$$
where 
$$h_{i,+}(r)= (1+R_{i,1}^2)^{B-\alpha}(1+r^2)^{-B},\quad h_{i,-}(r)=(1+R_{i,3}^2) ^{A-\beta}(1+r^2)^{-A}.$$ 
We require that 
$$(1+R_{i,1}^2)^{B-\alpha}=(1+R_{i,2}^2)^{B-\beta},\quad (1+R_{i,3}^2)^{A-\beta}=(1+R_{i,4}^2)^{A-\alpha}$$
so that $h$ is a continuous and decreasing function on $[0,\infty)$. For each $i$,
$$R_{i,2}=((1+R_{i,1}^2)^{\frac{B-\alpha}{B-\beta}}-1)^{\frac{1}{2}}$$
is determined by $R_{i,1}$, and 
$$R_{i,4}=((1+R_{i,3}^2)^{\frac{\beta-A}{\alpha-A}}-1)^{\frac{1}{2}}$$
is determined by $R_{i,3}$. We also require that $R_{i,3}=5R_{i,2}^2$ and $R_{i+1,1}= 5R_{i,4}^2$ for later use. Then the function $h$ is completely determined by $\alpha$ and $\beta$.
 
Because the terms in Ricci curvature (\ref{eq:Ric}) involving $h$ always appear as $h''/h$ or $h'/h$, the constants $(1+R_{i,1}^2)^{B-\alpha}$ and $(1+R_{i,3}^2) ^{A-\beta}$ in $h_{i,\pm}$ do not contribute to the Ricci curvature. In particular, when $k>K(B)=\max\{ 4B+2, 16B^2+8B \}$, the metric has positive Ricci curvature wherever $h$ is smooth.

Let us also point out that the choice of $k>K(B)$ assures that the doubly warped product (\ref{eq:warp}) has positive Ricci curvature with $h$ as one of $h_1$, $h_2$, $h_{i,+}$, or $h_{i,-}$.

\subsection{Step 2.} Next, we smooth $h$ around $R_{i,j}$ by cutoff functions while preserving positive Ricci curvature.

We need an observation from the Ricci curvature calculation (\ref{eq:Ric}).

\begin{obs}\label{obs:Ric}
Suppose that a doubly warped product (\ref{eq:warp}) with (\ref{eq:warp_f}) and (\ref{eq:warp_h}) has positive Ricci curvature on an interval $I$. We replace $h$ by a new function $h_{new}$ satisfying
\begin{equation}\label{eq:new}
h'_{new}<0,\quad \left|\dfrac{h'_{new}}{h_{new}}\right|>c \left|\dfrac{h'}{h}\right|,\quad \dfrac{h''_{new}}{h_{new}}<C \dfrac{h''}{h}
\end{equation}
on $I$, where $c,C>0$ are constants. Then the new doubly warped product with $h_{new}$ has positive Ricci curvature on $I$ if we further enlarge $k$.
\end{obs}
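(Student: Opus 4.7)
The plan is to verify positivity directly from the Ricci formulas in (4.4). The $S^k$-direction is automatic for any admissible $h_{new}$: its Ricci expression consists of $-f''/f > 0$, the sectional term $(k-1)(1-f'^2)/f^2 \geq 0$, and a cross-term $-f'h'_{new}/(fh_{new})$ which is positive as soon as $h'_{new} < 0$, the first hypothesis in (4.6). So only the two components displayed in (4.4) require attention.

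Abbreviate $\phi := -f''/f > 0$ and $\psi := -f'h'/(fh) > 0$ on $I$, and set $\psi_{new} := -f'h'_{new}/(fh_{new})$. Positivity of the original Ricci curvature on $I$ is equivalent to the two inequalities $h''/h < k\phi$ and $h''/h < k\psi$. From $h'_{new}, h' < 0$ together with the second hypothesis in (4.6) one obtains $\psi_{new} > c\psi$, while the third hypothesis gives an upper bound on $h''_{new}/h_{new}$ in terms of $h''/h$.

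The core step combines these. For $\Ric_{new}(\partial_r, \partial_r) > 0$ one needs $h''_{new}/h_{new} < k_{new}\phi$. Where $h''/h \geq 0$, the hypothesis yields $h''_{new}/h_{new} < Ch''/h < Ck\phi$, so $k_{new} \geq Ck$ suffices; where $h''/h < 0$, it yields $h''_{new}/h_{new} < 0 < k_{new}\phi$ for free. For $\Ric_{new}(Y,Y) > 0$ one needs $h''_{new}/h_{new} < k_{new}\psi_{new}$. Where $h''/h \geq 0$, one has $h''_{new}/h_{new} < Ck\psi \leq k_{new} c \psi < k_{new}\psi_{new}$ as soon as $k_{new} \geq Ck/c$; where $h''/h < 0$, the bound is again automatic. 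Choosing $k_{new} \geq Ck/\min(1,c)$ therefore preserves positivity in both directions on all of $I$.

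The argument is essentially inequality bookkeeping, so no major obstacle is expected. The one mild subtlety is that the hypothesis $h''_{new}/h_{new} < Ch''/h$ is a one-sided bound whose content depends on the sign of $h''/h$, and one must split cases accordingly to extract a useful estimate in each region. The constants $c, C$ enter only through the scaling factor $Ck/\min(1,c)$ by which $k$ must be enlarged, and no further analytic input is needed.
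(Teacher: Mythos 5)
Your proof is correct and is exactly the intended verification: the paper states this as an Observation and leaves the check implicit, relying on the formulas in (\ref{eq:Ric}) that you use. Your reading of original positivity as $h''/h < k\phi$ and $h''/h < k\psi$ with $\phi=-f''/f$, $\psi=-f'h'/(fh)$, the case split on the sign of $h''/h$, and the resulting bound $k_{\mathrm{new}} \ge Ck/\min(1,c)$ are all accurate.
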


The smoothing near $R_{i,j}$, where $i\in \mathbb{N}_+$ and $j\in\{1,2,3,4\}$, are all similar. Below we provide the details for $R_{i,1}$ and give brief indications for other $R_{i,j}$.

For convenience, we set $R=R_{i,1}\ge 100$. All positive constants $c_\star$ below may depend on $\alpha,\beta,A,B$, but are independent of $R$. Around $R$, we have two functions
$$h_1=(1+r^2)^{-\alpha},\quad h_{i,+}=(1+R^2)^{B-\alpha}(1+r^2)^{-B}$$
intersecting at $r=R$. Let $\phi(r)$ be a smooth cutoff function such that 
$$\phi\equiv 1 \text{ on } [0,1.01R],\quad \phi \equiv 0 \text{ on } [1.19R,+\infty), \quad \phi(1.1R)=1/2;$$
$$0\le \phi\le 1,\quad -c_1/R\le \phi'\le 0 \text{ on } [R,1.2R]$$
$$ -c_2/R^2\le \phi'' \le 0 \text{ on } [R,1.1R],$$ 
$$0 \le \phi'' \le c_2/R^2 \text{ on } [1.1R,1.2R].$$
We build a new function $h_s$ on $[R,1.2R]$ by 
$$h_s=\phi h_1 +(1-\phi)h_{i,+}$$
It is clear that $h_{i,+}\le h_s \le h_1$ on $[R,1.2R]$.
We check that $h_s$ satisfies (\ref{eq:new}) on $[R,1.2R]$.
We have
$$h_s'=\phi'(h_1-h_{i,+})+\phi h_1'+(1-\phi)h'_{i,+}.$$
It is clear that $h'_s<0$ on $[R,1.2R]$ because $h_1-h_{i,+}\ge 0$. On $[R,1.2R]$, there is a positive constant $c_3$ independent of $R$ such that
$$c_3^{-1} \le \left|\dfrac{h^{(j)}_1}{h^{(j)}_{i,+}}\right| \le c_3 $$
 holds for $j=0,1,2$. On the first half $[R,1.1R]$, $|h_s'|\ge \frac{1}{2}|h_1'|$; hence
$$\left|\dfrac{h'_s}{h_s}\right|\ge \dfrac{1}{2}\left|\dfrac{h_1'}{h_s}\right|\ge \dfrac{1}{2}\left|\dfrac{h_1'}{h_1}\right|.$$
On the second half $[1.1R,1.2R]$, we have $|h_s'|\ge \frac{1}{2}|h'_{i,+}|> \frac{1}{2c_3}|h_1'|$; hence 
$$\left|\dfrac{h_s'}{h_s}\right|\ge \dfrac{1}{2c_3}\left|\dfrac{h_1'}{h_1}\right|.$$
Next, we verify the second derivative estimate in $(\ref{eq:new})$. We calculate
$$h_s''=\phi''(h_1-h_{i,+})+2\phi'(h'_1-h'_{i,+})+\phi h_1''+(1-\phi)h_{i,+}''.$$
On $[R,1.1R]$,
$$h''_s \le 2 \dfrac{c_1}{R}|h'_1| + h_1''+h_{i,+}''\le c_4 h''_1;$$
on $[1.1R,1.2R]$,
$$h''_s\le \dfrac{c_2}{R^2} h_1+ 2\dfrac{c_1}{R}|h'_1| + h''_1+h''_{i,+}\le c_5 h''_1.$$
Together with $c_3^{-1}h_1\le h_{i,+}\le h_s$, we obtain
$$\dfrac{h''_s}{h_s}\le c_6 \dfrac{h''_1}{h_1}.$$
Therefore, we can replace $h$ from Step 1 on $[R,1.2R]$ by
$$h_s(r) =\begin{cases}
    h(r), & \text{if } r\not\in [R,1.2R],\\
    \phi h_1 +(1-\phi)h_{i,+}, & \text{if } r\in [R,1.2R].
\end{cases}$$
Then $h_s$ is smooth on $[0.5R,1.5R]$. Moreover, by Observation \ref{obs:Ric}, the resulting doubly warped product has positive Ricci curvature given $k$ is sufficiently large. This completes the smoothing at $R_{i,0}$.

The smoothing near other $R_{i,j}$ are similar. For $R=R_{i,2}$ or $R_{i,3}$, we change $h$ on $[0.8R,R]$ by using a suitable cut-off function. For $R=R_{i,4}$, we similarly change $h$ on $[R,1.2R]$. The choice of these intervals assures that the new function
$h_s$ is strictly decreasing. Then by Observation \ref{obs:Ric} and similar estimates as in the smoothing at $R_{i,0}$, one can show that the doubly warped product (\ref{eq:warp}) with this new $h_s$ has positive Ricci curvature for a sufficiently large $k$.

After the smoothing around all $R_{i,j}$ and the choice of a suitably large $k$, we obtain a smooth function $h_s$ with (\ref{eq:warp_h}) such that the doubly warped product (\ref{eq:warp}) with $h_s$ has positive Ricci curvature everywhere. Moreover, by construction
$$h_s(r)=\begin{cases}
   (1+r^2)^{-\alpha}, & \text{if } r\in [1.2R_{i,0},0.8R_{i,1}],\\
   (1+r^2)^{-\beta}, & \text{if } r\in [1.2R_{i,2},0.8R_{i,3}].
\end{cases}$$
This completes the construction of our example for Theorem \ref{thm:exmp}.

\subsection{Asymptotic properties}
We check that our example has the required asymptotic properties.

We fix a point $p\in M$ at $r=0$ and a lift $\tilde{p}\in \widetilde{M}$. Let $\gamma$ be a generator of $\pi_1(M,p)=\mathbb{Z}$. In the lemma below, we follow the method in \cite[Lemma 1.1]{PanWei} and \cite[Appendix B]{Pan21} to estimate $d(\gamma^l \tilde{p},\tilde{p})$; some modifications are needed given the nature of $h_s$.

\begin{lem}\label{lem:length_scale}
 There are positive constants $C_1,C_2,\rho_1,\rho_2$, depending on $\alpha$ or $\beta$, such that the following holds for all $i$ large.\\
 (1) For $l\in [\rho_1 R^{2\alpha+1}, \rho_2 R^{4\alpha+2}]$, where $R=2R_{i,0}$, we have
 \begin{equation*}
 C_1 l^{\frac{1}{2\alpha+1}} \le  d(\gamma^l \tilde{p},\tilde{p})\le   C_2 l^{\frac{1}{2\alpha+1}}.
\end{equation*}
(2) For $l\in [\rho_1 R^{2\beta+1}, \rho_2 R^{4\beta+2}]$, where $R=2R_{i,2}$, we have
\begin{equation*}
 C_1 l^{\frac{1}{2\beta+1}} \le  d(\gamma^l \tilde{p},\tilde{p})\le   C_2 l^{\frac{1}{2\beta+1}}.
\end{equation*}
\end{lem}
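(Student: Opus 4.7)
The plan is to adapt the method of \cite[Lemma~1.1]{PanWei} to the piecewise warping profile $h_s$, exploiting the fact that in the $l$-ranges considered, the relevant geodesic stays inside a region where $h_s$ agrees with one of the two pure Nabonnand models. Unwrapping the $S^1$ factor, $\widetilde{M}=[0,\infty)\times_f S^k\times_{h_s}\mathbb{R}$, and $\gamma$ acts by translation by $2\pi$ on the $\mathbb{R}$-coordinate, so $\gamma^l\tilde{p}=(0,\theta_0,2\pi l)$. By the $\mathrm{Isom}(S^k)$-symmetry, a minimizer can be taken with constant $S^k$-coordinate, reducing the problem to computing the distance in the two-dimensional model $\bigl([0,\infty)\times\mathbb{R},\,dr^2+h_s(r)^2\,dt^2\bigr)$ from $(0,0)$ to $(0,2\pi l)$; we work throughout in this model.

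\emph{Upper bound.} For any $R^{*}>0$, the three-segment path $(0,0)\to(R^{*},0)\to(R^{*},2\pi l)\to(0,2\pi l)$ has length exactly $2R^{*}+2\pi l\, h_s(R^{*})$. In Part~(1) I would set $R^{*}=c\,l^{1/(2\alpha+1)}$ for a constant $c=c(\alpha)$. The hypothesis $l\in[\rho_1 R^{2\alpha+1},\rho_2 R^{4\alpha+2}]$ with $R=2R_{i,0}$ forces
\[
R^{*}\in\bigl[c\,\rho_1^{1/(2\alpha+1)}\cdot 2R_{i,0},\ c\,\rho_2^{1/(2\alpha+1)}\cdot(2R_{i,0})^{2}\bigr].
\]
The construction yields $R_{i,1}=5R_{i,0}^{2}$ (via $R_{i+1,1}=5R_{i,4}^2$ and $R_{i+1,0}=R_{i,4}$), so for $\rho_1$ large enough and $\rho_2$ small enough this interval lies inside $[1.2R_{i,0},0.8R_{i,1}]$, on which $h_s\equiv h_1$. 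There $h_s(R^{*})=(1+R^{*2})^{-\alpha}\asymp R^{*-2\alpha}$, and the length is bounded above by $C_2\,l^{1/(2\alpha+1)}$.

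\emph{Lower bound.} Let $\eta(s)=(r(s),t(s))$ be any curve in the 2D model from $(0,0)$ to $(0,2\pi l)$, and set $r_{\max}=\max_s r(s)$. Since $r$ starts and ends at $0$ and attains $r_{\max}$, we have $L(\eta)\ge\int|r'|\,ds\ge 2r_{\max}$. Since $h_s$ is decreasing, $h_s(r(s))\ge h_s(r_{\max})$, and $\int|t'|\,ds\ge 2\pi l$, so $L(\eta)\ge 2\pi l\, h_s(r_{\max})$ as well. Splitting on whether $r_{\max}\ge R$ or $r_{\max}<R$ gives
\[
L(\eta)\ \ge\ \min\bigl\{2R,\ 2\pi l\, h_s(R)\bigr\}\qquad\text{for every }R>0.
\]
Choosing $R=c'\,l^{1/(2\alpha+1)}$ inside the same pure-$h_1$ region (valid under the same constraints on $\rho_1,\rho_2$), we have $h_s(R)=(1+R^2)^{-\alpha}$ and the two terms in the minimum are equal up to a constant, yielding $L(\eta)\ge C_1\,l^{1/(2\alpha+1)}$. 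Part~(2) is entirely analogous, with $R=2R_{i,2}$, the pure-$h_2$ interval $[1.2R_{i,2},0.8R_{i,3}]$ (using the explicit relation $R_{i,3}=5R_{i,2}^2$), and exponent $1/(2\beta+1)$.

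The main obstacle is the bookkeeping: ensuring that the optimal test value $R^{*}$ (and the optimal $R$ in the lower bound) lands inside the interval on which $h_s$ matches the corresponding pure Nabonnand model. This is precisely the role of the explicit scale relations $R_{i,1}=5R_{i,0}^{2}$ and $R_{i,3}=5R_{i,2}^{2}$ built into the construction: the admissible range $[\rho_1 R^{2\alpha+1},\rho_2 R^{4\alpha+2}]$ of $l$ is exactly the set whose $1/(2\alpha+1)$-power image lies in a closed subinterval of the pure-$h_1$ region, and likewise for $\beta$. The hypothesis "$i$ large" absorbs the $O(1)$ error between $(1+r^2)^{-\alpha}$ and $r^{-2\alpha}$ at the relevant large scales, allowing the constants $C_1,C_2$ to be taken independent of $i$.
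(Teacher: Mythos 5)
Your proof is correct and takes essentially the same approach as the paper's: an out-wind-back test path for the upper bound, a winding-constraint-plus-max-radius argument for the lower bound, and the scale relations $R_{i,1}=5R_{i,0}^2$, $R_{i,3}=5R_{i,2}^2$ to ensure the relevant radii land in the pure-$h_1$ or pure-$h_2$ regions. Your lower bound $L(\eta)\ge\min\{2R,\,2\pi l\,h_s(R)\}$ valid for every $R$ is a slightly cleaner packaging than the paper's bookkeeping of the actual loop size $R_l$, but the substance is the same.
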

\begin{proof}
Let $c_l$ be a minimal geodesic loop at $p$ that represents $\gamma^l \in \pi_1(M,p)$. Below we estimate the length of $c_l$ for suitable $l$. Let $R_l$ be the size of $c_l$, that is, the smallest radius such that $c_l$ is contained in $\overline{B_{R_l}}(p)$, and let $r_l>0$, which will be specified later. From the proof of \cite[Lemma 1.1]{PanWei}, we have inequality
$$l\cdot L(R_l)\le \length(c_l)\le 2r_l + l\cdot L(r_l),$$
where $L(r)=2\pi h_s(r)$ is the length of a circle in $M$ at distance $r$. More precisely, the upper bound $2r_l + l\cdot L(r_l)$ comes from a loop obtained by joining a radial segment from $p$ to distance $r_l$, winding around the circle $l$ many times at distance $r_l$, and the inverse of the radial segment; the lower bound $l\cdot L(R_l)$ comes from winding around the circle $l$ many times at distance $R_l$. 

(1) We write $R=2R_{i,0}$. Recall that for $i\ge 2$, we have chosen $R_{i,1}=5R_{i,0}^2$ in Step 1. Then $[R,R^2]\subseteq[2R_{i,0},0.8R_{i,1}]$ and it follows from the construction of $h_s$ that
$$L(r_l)=2\pi (1+r_l^2)^{-\alpha} \le 2\pi r_l^{-2\alpha}$$
when $r_l\in [R,R^2]$. Let us consider $l\in [\frac{R^{2\alpha+1}}{2\pi\alpha},\frac{R^{4\alpha+2}}{2\pi\alpha}]$ and $r_l=(2\pi\alpha l)^{\frac{1}{2\alpha+1}}\in [R,R^2]$. Then
$$\length(c_l)\le 2r_l + l\cdot 2\pi r_l^{-2\alpha}=C(\alpha) l^{\frac{1}{2\alpha+1}},$$
where $C(\alpha)=(2+\alpha^{-1})(2\pi \alpha)^{\frac{1}{2\alpha+1}}$.

Next, we prove a lower bound for $\length(c_l)$. First note that
$$2R_l \le \length(c_l) \le C(\alpha) l^{\frac{1}{2\alpha+1}}.$$
If we further restrict that $l\le \frac{R^{4\alpha+2}}{C(\alpha)^{2\alpha+1}}$, then
$$R_l \le \dfrac{1}{2}C(\alpha) l^{\frac{1}{2\alpha+1}} \le \dfrac{1}{2}R^2.$$
We also note that if $l\ge C(\alpha)^{\frac{2\alpha+1}{2\alpha}} R^{2\alpha+1}$, then $R_l\ge R$ holds; otherwise, we would have
$$C(\alpha) l^{\frac{1}{2\alpha+1}}\ge l\cdot L(R_l)\ge l\cdot L(R)\ge l\cdot 2\pi(1+R^2)^{-\alpha},$$
$$\dfrac{C(\alpha)}{2\pi} (1+R^2)^\alpha \ge l^{\frac{2\alpha}{2\alpha+1}}\ge C(\alpha) R^{2\alpha},$$
which is impossible given $R$ large.
Since $R_l\in [R,R^2]\subseteq [2R_{i,0},0.8R_{i,1}]$, 
$$l\cdot 2\pi(1+R_l^2)^{-\alpha}=l\cdot L(R_l) \le C(\alpha) l^{\frac{1}{2\alpha+1}}$$
yields
$$R_l \ge \dfrac{1}{2}\left( \dfrac{2\pi}{C(\alpha)} \right)^{\frac{1}{2\alpha}} l^{\frac{1}{1+2\alpha}}.$$
This shows the lower bound
$$\length(c_l)\ge 2R_l \ge \left( \dfrac{2\pi}{C(\alpha)} \right)^{\frac{1}{2\alpha}} l^{\frac{1}{1+2\alpha}}.$$

In conclusion, we set 
$$\rho_1= C(\alpha)^{\frac{2\alpha+1}{2\alpha}},\quad \rho_2=\dfrac{1}{C(\alpha)^{2\alpha+1}},\quad C_1=\left( \dfrac{2\pi}{C(\alpha)} \right)^{\frac{1}{2\alpha}},\quad C_2=C(\alpha),$$
then
$$C_1 l^{\frac{1}{2\alpha+1}} \le d(\gamma^l \tilde{p},\tilde{p}) \le C_2 l^{\frac{1}{2\alpha+1}}$$
holds for all $l\in [\rho_1 R^{2\alpha+1},\rho_2 R^{4\alpha+2}]$.

The proof for (2) is similar.
\end{proof}

With Lemma \ref{lem:length_scale}, we prove the orbit growth order in Theorem \ref{thm:exmp}.

\begin{proof}[Proof of Theorem \ref{thm:exmp}] 
We first note that $\length(c_l)\leq \length(c_{l+1})$ for every $l\in \mathbb{N}$. Let $t_i=2R_{i,0}$ and $b_i=\lceil \frac{\rho_2}{2} t_i^{4\alpha+2}\rceil$. By Lemma \ref{lem:length_scale}, for $r_i=C_1b_i^{\frac{1}{2\alpha+1}}$, we have $\length(c_{b_i})\geq  r_i$. Thus 
$$\#\{l\in\mathbb{N}_+:d(\Tilde{p},\gamma^l\Tilde{p})\leq r_i\}\leq b_i=\left(\frac{r_i}{C_1}\right)^{2\alpha+1}.$$
Next, we set $a_i= \lfloor(\frac{r_i}{C_2})^{2\alpha+1}\rfloor=\lfloor(\frac{C_1}{C_2})^{2\alpha+1}b_i \rfloor$. For $i$ large, again by Lemma \ref{lem:length_scale}, we have $\length(c_{a_i})\leq r_i$, so 
$$\#\{l\in\mathbb{N}_+:d(\Tilde{p},\gamma^l\Tilde{p})\leq r_i\}\geq a_i\ge \left(\frac{r_i}{2C_2}\right)^{2\alpha+1}.$$
We complete the proof for (1), that is,
$$c_1 r_i^{2\alpha+1}\le \#\Gamma(r_i) \le c_2 r_i^{2\alpha+1}.$$

The proof for (2) is similar.
\end{proof}
 
\begin{rem}\label{rem:limit_Grushin}
Similar to the argument in \cite[Section 3]{DHPW}, we can use a change of variables to show that both $(2\alpha)$- and $(2\beta)$-Grushin halfplanes appear as asymptotic cones of $\widetilde{M}$, the universal cover of $M$ in Theorem \ref{thm:exmp}. In fact, $\widetilde{M}$ is a doubly warped product
$$\widetilde{M}=[0,\infty)\times_f S^k \times_{h_s} \mathbb{R},\quad  \tilde{g}=dr^2+\dfrac{r^2}{(1+r^2)^{1/2}} ds_k^2 + h_s(r)^2 dv^2.$$
On $[1.2R_{i,0},0.8R_{i,1}]$, where $R_{i,1}\gg R_{i,0}$, we have $h_s(r)=(1+r^2)^{-\alpha}$. We apply a changes of variables $t=\lambda^{-1}r$ and $w=\lambda^{-1-2\alpha} v$, where $\lambda>1$. Then
$$\lambda^{-2}\tilde{g}=dt^2 + \dfrac{t^2}{(1+\lambda^2 t^2)^{1/2}}ds_k^2 + h_s(\lambda t) \lambda^{4\alpha} dw^2.$$
When $t\in [\lambda^{-1} 1.2R_{i,0},\lambda^{-1} 0.8R_{i,1}]$, we have
$$h_s(\lambda t) \lambda^{4\alpha}=\left( \dfrac{\lambda^2}{1+\lambda^2t^2} \right)^{2\alpha} \to t^{-4\alpha}$$
as $\lambda\to\infty$. Let us choose a sequence $\lambda_i\to\infty$ with $R_{i,0}\ll \lambda_i \ll R_{i,1}$, then $[\lambda_i^{-1} 1.2R_{i,0},\lambda_i^{-1}0.8R_{i,1}]$  exhausts $[0,\infty)$ as $i\to\infty$. Therefore, we have convergence
$$\lambda_i^{-1}\tilde{g} \to dt^2 + t^{-4\alpha} dw^2.$$
This shows that $(\lambda_i^{-1}\widetilde{M},\tilde{p})$ Gromov-Hausdorff converges to the $(2\alpha)$-Grushin halfplane as $i\to\infty$. Similarly, for $R_{i,2} \ll \lambda_i \ll R_{i,3}$, $(\lambda_i^{-1}\widetilde{M},\tilde{p})$ converges to the $(2\beta)$-Grushin halfplane.
\end{rem}

One can modify the construction in Theorem \ref{thm:exmp} to obtain 

\begin{thm}\label{thm:vary_Hdim}
   Given $2\le d_1<d_2$, there is an open $n$-manifold $N$ of positive Ricci curvature, where $n$ is sufficiently large, such that for any real number $d\in [d_1,d_2]$, there is an asymptotic cone of $N$ with Hausdorff dimension $d$.
\end{thm}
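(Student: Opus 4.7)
The plan is to adapt the construction of Theorem~\ref{thm:exmp} by letting $h_s$ oscillate through a countable dense set of exponents in a whole interval, instead of just the two exponents $\alpha,\beta$. Set $\alpha_j=(d_j-1)/2$ for $j=1,2$; the hypothesis $d_1\ge 2$ gives $\alpha_1\ge 1/2$, which is exactly what makes the $(2\alpha)$-Grushin halfplane have Hausdorff dimension $1+2\alpha$ (rather than the ambient value $2$) for every $\alpha\in[\alpha_1,\alpha_2]$. Fix a countable dense subset $\{a_j\}_{j=1}^\infty\subseteq[\alpha_1,\alpha_2]$ and an enumeration $\sigma:\mathbb{N}_+\to\mathbb{N}_+$ hitting every index infinitely often; fix auxiliary constants $0<A<\alpha_1$ and $B>\alpha_2$.

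Take $M=[0,\infty)\times_f S^k\times_{h_s}S^1$ with the same $f$ as in Theorem~\ref{thm:exmp} and a piecewise $h_s$ built stage by stage on consecutive intervals. On a plateau $[P_i,Q_i]$ set $h_s(r)=c_i(1+r^2)^{-a_{\sigma(i)}}$, and on the subsequent bridge $[Q_i,P_{i+1}]$ set $h_s(r)=c_i'(1+r^2)^{-E_i}$, where
\[
E_i=\begin{cases}B,& a_{\sigma(i+1)}>a_{\sigma(i)},\\ A,& a_{\sigma(i+1)}<a_{\sigma(i)},\end{cases}
\]
and $c_i,c_i'$ are uniquely determined by continuity of $h_s$ at $Q_i$ and $P_{i+1}$. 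Impose $Q_i\ge 5P_i^2$ and $P_{i+1}\ge 5Q_i^2$ so that each plateau and each bridge has an unbounded scale ratio. Finally, smooth $h_s$ near every transition point exactly as in Step~2 of Theorem~\ref{thm:exmp}.

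Set $N=\widetilde M$. For positive Ricci curvature, observe that on each smooth piece $h_s$ is a pure power $(1+r^2)^{-e}$ (up to multiplicative constant) with $e\in[A,B]$, so the quantities $h_s'/h_s$ and $h_s''/h_s$ admit uniform bounds; choosing $k\ge K(B)$ makes $\Ric>0$ on every plateau and bridge, and the smoothing estimates from Observation~\ref{obs:Ric} depend only on $A,B$, so one further enlargement of $k$ yields $\Ric>0$ throughout. For the asymptotic cones, fix $d\in[d_1,d_2]$, set $\alpha=(d-1)/2$, and use density to pick $i_m\to\infty$ with $a_{\sigma(i_m)}\to\alpha$. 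Choose scales $\lambda_m\in[P_{i_m},Q_{i_m}]$ with $P_{i_m}/\lambda_m\to0$ and $Q_{i_m}/\lambda_m\to\infty$, which is possible since $Q_{i_m}/P_{i_m}\to\infty$. Running the change of variables $t=\lambda_m^{-1}r$, $w=c_{i_m}\lambda_m^{-1-2a_{\sigma(i_m)}}v$ from Remark~\ref{rem:limit_Grushin} on the exhausting interval $[\lambda_m^{-1}P_{i_m},\lambda_m^{-1}Q_{i_m}]$, the $S^k$-factor collapses and the rescaled metric tends to $dt^2+t^{-4\alpha}dw^2$ because $a_{\sigma(i_m)}\to\alpha$. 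Hence $(\lambda_m^{-1}N,\tilde p)$ Gromov--Hausdorff converges to the $(2\alpha)$-Grushin halfplane, whose Hausdorff dimension equals $1+2\alpha=d$.

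The main obstacle is keeping $\Ric>0$ across infinitely many smoothed transitions among different plateau exponents, using a single fixed dimension $k$; this is resolved by the scale invariance of the Ricci computation and the smoothing estimates, together with the uniform bound $e\in[A,B]$ on all exponents that appear. A secondary, more conceptual, issue is that $[d_1,d_2]$ is uncountable while only countably many exponents can occupy plateaus of $h_s$; the density-plus-rescaling step above converts countably many plateau exponents into an uncountable family of limit Hausdorff dimensions.
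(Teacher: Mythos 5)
Your proposal is correct and follows essentially the same route as the paper: enumerate a countable dense family of exponents in $[(d_1-1)/2,(d_2-1)/2]$, build $h_s$ with plateaus for each exponent separated by bridges of exponent $A$ or $B$, use the uniform bound $e\in[A,B]$ to obtain $\mathrm{Ric}>0$ for a single fixed $k$, and extract Grushin-halfplane asymptotic cones on the universal cover via the rescaling in Remark~\ref{rem:limit_Grushin} along subsequences of plateau exponents converging to any prescribed $\alpha$.
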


\begin{proof}
   Let $\beta_j\ge 1/2$ such that $1+2\beta_j=d_j$, where $j=1,2$. We arrange all the rational numbers in $[\beta_1,\beta_2]$ into a sequence $\{\alpha_i\}$ such that any real number in $[\beta_1,\beta_2]$ is an accumulation point of $\{\alpha_i\}$. Let 
   $$h_i(r)=(1+r^2)^{-\alpha_i}.$$
   Following a similar construction for Theorem \ref{thm:exmp}, we can have scales $R_{i,1}$ and $R_{i,2}$, where $i\in\mathbb{N}_+$, such that
   $$R_{i,1}\ll R_{i,2} \ll R_{i+1,1},$$
   and a smooth function $h_s:[0,\infty)\to \mathbb{R}^+$ with (\ref{eq:warp_h}) such that 
   $$h_s=h_i \text{ on } [1.2R_{i,1},0.8R_{i,2}].$$
   Then for a large $k$, the doubly warped product 
   $$M=[0,\infty)\times_f S^k \times_{h_s} S^1,\quad g=dr^2+f^2(r)ds_k^2 + h_s^2(r)ds_1^2$$
   has positive Ricci curvature.

   Let $N=\widetilde{M}$ be the Riemannian universal cover of $M$. For any $d\in [d_1,d_2]$, we can choose a subsequence $\alpha_{i(j)}$ of $\alpha_i$ such that 
   $$1+2\alpha_{i(j)}\to 1+2\alpha=d.$$
   Then by the same argument in Remark \ref{rem:limit_Grushin}, for any sequence $\lambda_{i(j)}\to \infty$ with $R_{i(j),1}\ll \lambda_{i(j)} \ll R_{i(j),2}$, the corresponding asymptotic cone of $N$
   $$(\lambda_{i(j)}^{-1}N,\tilde{p})\overset{GH}\longrightarrow (Y,y)$$
   is the $(2\alpha)$-Grushin plane with Hausdorff dimension $d$.
\end{proof}

\bibliographystyle{plain} 
\bibliography{refs}
\end{document}